\theoremstyle{plain}
\newtheorem{thm}{Theorem}[section]
\newaliascnt{cor}{thm}
\newaliascnt{prop}{thm}
\newaliascnt{lem}{thm}
\newtheorem{prop}[prop]{Proposition}
\newtheorem{lem}[lem]{Lemma}
\theoremstyle{definition}
\newaliascnt{defn}{thm}
\newaliascnt{asu}{thm}
\newaliascnt{con}{thm}
\newcounter{stp}
\newcounter{stpi}
\newcounter{stpci}
\newcounter{stpiii}
\theoremstyle{remark}
\newaliascnt{rem}{thm}
\newaliascnt{exa}{thm}
\newaliascnt{masu}{thm}
\newaliascnt{nota}{thm}
\newaliascnt{sett}{thm}
\newtheorem{rem}[rem]{Remark}
\numberwithin{equation}{section}
\setlist[enumerate]{font = \normalfont}
\newcommand{\F}{\mathbb{F}}
\newcommand{\E}{\mathbb{E}}
\newcommand{\R}{\mathbb{R}}
\newcommand{\C}{\mathbb{C}}
\newcommand{\B}{\mathbb{B}}
\newcommand{\bP}{\mathbb{P}}
\newcommand{\D}{\mathbb{D}}
\newcommand{\bS}{\mathbb{S}}
\newcommand{\bM}{\mathbb{M}}
\newcommand{\bK}{\mathbb{K}}
\newcommand{\W}{\mathbb{W}}
\newcommand{\bO}{\mathbb{O}}
\newcommand{\tE}{\Tilde{\mathbb{E}}}
\newcommand{\rW}{\mathrm{W}} 
\newcommand{\rL}{\mathrm{L}}
\newcommand{\rE}{\mathrm{E}}
\newcommand{\rH}{\mathrm{H}}
\newcommand{\rT}{\mathrm{T}}
\newcommand{\rC}{\mathrm{C}}
\newcommand{\rB}{\mathrm{B}}
\newcommand{\rD}{\mathrm{D}}
\newcommand{\rX}{\mathrm{X}}
\newcommand{\rY}{\mathrm{Y}}
\newcommand{\rJ}{\mathrm{J}}
\newcommand{\rI}{\mathrm{I}}
\newcommand{\rII}{\mathrm{II}}
\newcommand{\rIII}{\mathrm{III}}
\newcommand{\ri}{\mathrm{i}}
\newcommand{\rii}{\mathrm{ii}}
\newcommand{\riii}{\mathrm{iii}}
\newcommand{\DeltaN}{\Delta_{\mathrm{N}}}
\newcommand{\BUC}{\mathrm{BUC}}
\newcommand{\SO}{\mathrm{SO}}
\newcommand{\rd}{\,\mathrm{d}}
\newcommand{\fs}{\mathrm{fs}}
\newcommand{\fl}{\mathrm{fl}}
\newcommand{\pr}{\mathrm{pr}}
\newcommand{\lc}{\mathrm{lc}}
\newcommand{\mre}{\mathrm{e}}
\newcommand{\mri}{\mathrm{i}}
\newcommand{\rh}{\mathrm{h}}
\newcommand{\rl}{\mathrm{l}}
\newcommand{\rr}{\mathrm{r}}
\newcommand{\dist}{\mathrm{dist}}
\newcommand{\Cmr}{C_\mathrm{MR}}
\newcommand{\mS}{m_\mathcal{S}}
\newcommand{\Afs}{A_\fs}
\newcommand{\AfsI}{A_{\fs,\rI}}
\newcommand{\Bfs}{B_\fs}
\newcommand{\cS}{\mathcal{S}}
\newcommand{\cF}{\mathcal{F}}
\newcommand{\cE}{\mathcal{E}}
\newcommand{\cO}{\mathcal{O}}
\newcommand{\cB}{\mathcal{B}}
\newcommand{\cL}{\mathcal{L}}
\newcommand{\cM}{\mathcal{M}}
\newcommand{\cN}{\mathcal{N}}
\newcommand{\cG}{\mathcal{G}}
\newcommand{\cK}{\mathcal{K}}
\newcommand{\cJ}{\mathcal{J}}
\newcommand{\cT}{\mathcal{T}}
\newcommand{\cD}{\mathcal{D}}
\newcommand{\cH}{\mathcal{H}}
\newcommand{\tcK}{\Tilde{\cK}}
\newcommand{\tv}{\Tilde{v}}
\newcommand{\tQ}{\Tilde{Q}}
\newcommand{\tp}{\Tilde{\pi}}
\newcommand{\tell}{\Tilde{\ell}}
\newcommand{\tomega}{\Tilde{\omega}}
\newcommand{\tz}{\Tilde{z}}
\newcommand{\tS}{\Tilde{S}}
\newcommand{\hv}{\widehat{v}}
\newcommand{\hQ}{\widehat{Q}}
\newcommand{\hp}{\widehat{\pi}}
\newcommand{\hell}{\widehat{\ell}}
\newcommand{\homega}{\widehat{\omega}}
\newcommand{\hz}{\widehat{z}}
\newcommand{\ovv}{\overline{v}}
\newcommand{\ovQ}{\overline{Q}}
\newcommand{\ovell}{\overline{\ell}}
\newcommand{\ovomega}{\overline{\omega}}
\newcommand{\eps}{\varepsilon}
\newcommand{\del}{\partial}
\DeclareMathOperator{\tr}{tr}
\DeclareMathOperator{\Id}{Id}
\DeclareMathOperator{\mdiv}{div}
\DeclareMathOperator{\Rep}{Re}
\DeclareMathOperator{\Imp}{Im}
\newcommand{\tin}{\enspace \text{in} \enspace}
\newcommand{\ton}{\enspace \text{on} \enspace}
\newcommand{\tfor}{\enspace \text{for} \enspace}
\newcommand{\tforall}{\enspace \text{for all} \enspace}
\newcommand{\tand}{\enspace \text{and} \enspace}
\newcommand{\tso}{\enspace \text{so} \enspace}
\newcommand{\twhere}{\enspace \text{where} \enspace}
\newcommand{\twith}{\enspace \text{with} \enspace}
\newcommand{\taswellas}{\enspace \text{as well as} \enspace}
\begin{document}

\title[Dynamics of the general $Q$-tensor model interacting with a rigid body]{Dynamics of the general $Q$-tensor model interacting with a rigid body}

\author{Felix Brandt}
\address{Department of Mathematics, University of California, Berkeley, Berkeley, 94720, CA, USA.}
\email{fbrandt@berkeley.edu}
\author{Matthias Hieber}
\address{Technische Universit\"{a}t Darmstadt,
Schlo\ss{}gartenstra{\ss}e 7, 64289 Darmstadt, Germany.}
\email{hieber@mathematik.tu-darmstadt.de}
\author{Arnab Roy}
\address{Basque Center for Applied Mathematics (BCAM), Alameda de Mazarredo 14, 48009 Bilbao, Spain.}
	\address{IKERBASQUE, Basque Foundation for Science, Plaza Euskadi 5, 48009 Bilbao, Bizkaia, Spain.}
\email{aroy@bcamath.org}
\subjclass[2020]{35Q35, 76A15, 74F10, 76D03, 35K59}
\keywords{Liquid crystal-rigid body interaction, $Q$-tensor model, strong solutions, added mass operator, global well-posedness, energy decay, exponential convergence to equilibria}

\begin{abstract}
In this article, the fluid-rigid body interaction problem of nematic liquid crystals described by the general Beris-Edwards $Q$-tensor model is studied.
It is proved first that the total energy of this problem decreases in time.
The associated mathematical problem is a quasilinear mixed-order system with moving boundary.
After the transformation to a fixed domain, a monolithic approach based on the added mass operator and lifting arguments is employed to establish the maximal $\rL^p$-regularity of the linearized problem in an anisotropic ground space.
This paves the way for the local strong well-posedness for large data and global strong well-posedness for small data of the interaction problem.
\end{abstract}

\maketitle

\section{Introduction}

Nematic liquid crystals are described by the Doi-Onsager, the Ericksen-Leslie and the Landau-de Gennes theory.
While Ericksen-Leslie models are vector models, $Q$-tensor models including the Landau-de Gennes theory rely on symmetric, traceless $3 \times 3$-matrices $Q$ to model the biaxial alignment of molecules.
The evolution of~$Q$ is influenced by the free energy of the molecules and aspects such as tumbling and alignment effects.
For further background, we refer to the monographs of Virga \cite{Vir:94} and Sonnet and Virga \cite{SV:12} or the survey articles of Ball \cite{Ball:17}, Hieber and Pr\"uss \cite{HP:18}, Wang, Zhang and Zhang \cite{WZZ:21} and Zarnescu \cite{Zar:21}.

In this work, we investigate the fluid-rigid body interaction problem of the general Beris-Edwards $Q$-tensor model \cite{BE:94}.
More precisely, we show the local strong well-posedness for large data and the global strong well-posedness for small data of the interaction problem.
In particular, in the underlying $Q$-tensor model, we consider an arbitrary ratio of tumbling and alignment effects $\xi \in \R$.
For the fluid velocity~$u$ and pressure~$\pi$ and the molecular orientation of the liquid crystals~$Q$, taking values in the space $\bS_0^3$ of symmetric and traceless $3 \times 3$-matrices, the general $Q$-tensor model is of the form
\begin{equation}\label{eq:Q-tensor simplified}
\left\{
    \begin{aligned}
        \partial_t u + \left(u \cdot \nabla\right)u + \nabla \pi 
        &= \Delta u + \mdiv\bigl(\tau(Q,H(Q)) + \sigma(Q,H(Q))\bigr), \enspace \mdiv u = 0, \\
        \partial_t Q + \left(u\cdot \nabla\right)Q - S(\nabla u,Q) 
        &= H(Q).
    \end{aligned}
\right.
\end{equation}
Here $H(Q) = \Delta Q - Q + (Q^2 - \tr(Q^2) \nicefrac{\Id_3}{3}) - \tr(Q^2) Q$ is the variational derivative of the free energy functional.
The non-Newtonian stresses due to the molecular orientation of the biaxial liquid crystals
\begin{equation*}
    \begin{aligned}
        \tau(Q,H(Q))
        &= - \nabla Q \odot \nabla Q - \xi(Q + \nicefrac{\Id_3}{3}) H(Q) - \xi H(Q)(Q + \nicefrac{\Id_3}{3}) + 2 \xi(Q + \nicefrac{\Id_3}{3}) \tr\bigl(Q H(Q)\bigr) \tand\\
        \sigma(Q,H(Q)) 
        &= Q H(Q) - H(Q) Q = Q \Delta Q - \Delta Q Q
    \end{aligned}
\end{equation*}
correspond to the symmetric and antisymmetric part of the stress tensor.
For the deformation tensor~$\D(u)$ and its antisymmetric counterpart $\W(u)$, the term $S(\nabla u,Q)$ accounting for stretching and rotation is given by
$S(\nabla u,Q) = (\xi \D(u) + \W(u))(Q + \nicefrac{\Id_3}{3}) + (Q + \nicefrac{\Id_3}{3})(\xi \D(u) - \W(u)) - 2\xi (Q + \nicefrac{\Id_3}{3}) \tr(Q \nabla u)$. 

Beris-Edwards models for liquid crystals have been investigated by many authors considering simplifications or modifications of the model.
In the context of weak solutions, we refer for instance to the articles \cite{BZ:11, PZ:11, PZ:12, GGRB:14, FRSZ:14, ADL:14, Wil:15}.
Strong solutions were e.\ g.\ studied in \cite{ADL:16, CRWX:16, SS:19, MS:22}.
These results either concern the 2D case, the situation of $\xi = 0$ or simplified variants of the model.
Only very recently, a rather complete understanding of the strong well-posedness and dynamics of the \emph{general} Beris-Edwards model with \emph{arbitrary} ratio of tumbling and alignment effects~$\xi \in \R$ in 3D was obtained in \cite{HHW:24}.

In the present article, we build on this new development and extend the analysis to the associated interaction problem with a rigid body.
The physical motivation for liquid crystal interaction problems is the formation of \emph{liquid crystal colloids} by dispersion of colloidal particles in the liquid crystal host medium, where we view colloidal particles as rigid bodies.
We refer for instance to \cite{MU, MO} for more details on the underlying physics.
The interaction with a rigid body poses significant challenges.
Most importantly, the resulting problem is a \emph{moving domain problem}, and the equations describing the dynamics of the liquid crystals are coupled to the equations governing the motion of the rigid body via the complicated stress tensor and the continuity of the fluid and rigid body velocity on their moving interface.

More precisely, the dynamics of the rigid body follows Newton's laws.
Consequently, for the total stress tensor $\Sigma(u,\pi,Q) = 2\D(u) + \tau(Q,H(Q)) + \sigma(Q,H(Q)) - \pi \Id_3$, the mass $\mS$ and inertia matrix $J$ of the rigid body, the position of its center of mass $h$, its angular velocity $\Omega$, its domain $\cS(t)$ and the outward unit normal $\nu(t)$, the rigid body motion is determined by
\begin{equation}\label{eq:simplified rigid body eqs}
    \mS h''(t)
    = -\int\limits_{\partial \cS(t)} \Sigma(u,\pi,Q) \nu(t)\rd \Gamma \tand 
    (J\Omega)' (t) = -\int\limits_{\partial \cS(t)} (x-h(t)) \times \Sigma(u,\pi,Q) \nu(t)\rd \Gamma.
\end{equation}
In particular, the complex effects due to the non-Newtonian stresses $\tau$ and $\sigma$ enter in these equations, and estimating the boundary integrals leads to difficulties.
The domain of the present problem changes in time due to the motion of the rigid body.
The other coupling of the dynamics of the body to the $Q$-tensor model arises from the equality of fluid and body velocity on their interface $\del \cS(t)$ and reads as
\begin{equation}\label{eq:simplified interface cond}
    u(t,x) = h'(t) + \Omega(t) \times (x-h(t)), \tfor t \in (0,T), \enspace x \in \del \cS(t).
\end{equation}

For handling the present moving boundary problem given in simplified form in \eqref{eq:Q-tensor simplified}--\eqref{eq:simplified interface cond}, we first employ a local transformation to a fixed domain.
The resulting problem is a quasilinear non-autonomous one, and we tackle it by establishing maximal $\rL^p$-regularity for a suitable linearization first.

The $Q$-tensor model \eqref{eq:Q-tensor simplified} is a mixed-order system, where the diagonal parts are of second order, while the off-diagonal parts are of order one and three.
In contrast to the rather well-understood case of systems with principal part, mixed-order systems lack a unified theory.
Here we choose an anisotropic ground space for the liquid crystal part, so every entry is of highest order.
Maximal regularity is only known to be valid on $\rL^2 \times \rH^1$, see \cite{HHW:24}, or on $\rL^q \times \rW^{1,q}$ for~$q$ close to~$2$, see \cite[Section~7.6]{AH:23}.
It is an interesting open question whether maximal regularity for the linearized general $Q$-tensor model can be achieved for general $q \in (1,\infty)$ as well.
Previous works in this direction are limited to simplified models.

The block structure with all blocks of highest order also leads to significant challenges.
We employ here a monolithic approach, so we keep the interface condition~\eqref{eq:simplified interface cond} in the study of the linearized problem and solve the linearization of \eqref{eq:Q-tensor simplified} and~\eqref{eq:simplified rigid body eqs} simultaneously.
The \emph{added mass operator} together with an involved lifting procedure allows us to write the linearized problem in terms of a so-called \emph{fluid-structure operator} tailored to the present setting of the $Q$-tensor model.

We establish the maximal $\rL^p$-regularity on the anisotropic space $\rL_{\sigma}^2(\cF_0) \times \rH^1(\cF_0,\bS_0^3) \times \R^3 \times \R^3$, where~$\cF_0$ is the initial fluid domain, and $\rL_\sigma^2(\cF_0)$ denotes the space of solenoidal vector fields in~$\rL^2(\cF_0)^3$.
In particular, the condition $Q \in \bS_0^3$ is included in the function spaces in elegant way.
The maximal $\rL^p(\R_+)$-regularity with exponential decay for the linearization at zero follows from spectral theory.

In order to obtain the first main result of this paper, the local strong well-posedness of the interaction problem \eqref{eq:Q-tensor simplified}--\eqref{eq:simplified interface cond} for large initial data made precise below in \autoref{thm:loc strong wp}, we estimate the nonlinear terms and employ the maximal regularity.
The anisotropic ground space $\rL^2 \times \rH^1$ for the liquid crystals part renders the nonlinear problem more complicated.
The time interval of existence in this case is limited by the requirement that we exclude the case of collision of the rigid body with the outer fluid boundary.

With regard to the second main result of the present article, the global strong well-posedness and exponential decay of the solution for small initial data as asserted in \autoref{thm:global strong wp for small data}, we first prove that the total energy of the interaction problem \eqref{eq:Q-tensor simplified}--\eqref{eq:simplified interface cond} is decreasing, which reveals the shape of the equilibria.
In a second step, we make use of a fixed point argument, relying on the maximal $\rL^p(\R_+)$-regularity of the linearization at zero as well as adjusted decay estimates of the transform.
Let us emphasize that we are especially able to show that no collision of the rigid body with the outer fluid boundary occurs.
This follows from the exponential stability of the semigroup associated to the linearized problem around zero and the smallness of the initial data.
The exponential stability also reveals that all velocities tend to zero as time tends to infinity.
However, it remains an interesting open problem to determine if the position of the center of the moving rigid body converges to some point $h_{\infty}\in \R^3$ as $t\rightarrow\infty$.
In the situation of a rigid body immersed in a viscous, incompressible Newtonian fluid filling the entire $\R^3$, this problem has recently been tackled in the case of a ball in \cite{EMT:23}, then for a rigid body of arbitrary shape in \cite{MT:23} and for the case of several bodies in \cite{bravin2025long}.

Fluid-rigid body interaction problems of incompressible, Newtonian fluids are classical and abundantly documented.
We mention e.\ g.\ the pioneering article of Serre \cite{Ser:87} and the papers \cite{DE:99, CSMT:00, nevcasova2021} in the weak setting or the works \cite{Galdi:02, GS:02, Takahashi:03, GGH:13, EMT:23} in the strong setting, see also \cite{KKLTTW:18} for an overview of the topic.
Most of the above articles deal with the semilinear setting, whereas here, due to the quasilinear structure of the stress tensor, we need to deal with a \emph{quasilinear} fluid-structure interaction problem.

In contrast to the case of Newtonian fluids, the investigation of fluid-rigid body interaction problems of liquid crystals started only recently.
In \cite{geng2023global}, Geng, Roy and Zarnescu showed the global existence of a weak solution to the interaction problem of a simplified $Q$-tensor model, whereas here, we study the general $Q$-tensor model with arbitrary ratio $\xi \in \R$ and especially obtain uniqueness of solutions.
In \cite{BBHR:23}, the local strong well-posedness and the global strong well-posedness for initial data close to equilibria of the fluid-rigid body interaction problem of the simplified Ericksen-Leslie model (see \cite{LL:95}) were proved.
The complexity of the stress tensor, the mixed-order character of the underlying $Q$-tensor model as well as the fact that the molecular orientation is described by a matrix render the analysis of the present interaction problem fundamentally different.

For convenience of the reader, let us give a brief, non-technical overview of the procedure to obtain the well-posedness results of the present interaction problem of a rigid body with the general Beris-Edwards $Q$-tensor model.
First, we determine the set of equilibrium solutions by examining the critical points of the energy functional associated with the interaction problem.
Next, we employ a transform that only acts locally, i.\ e., in a neighborhood of the translating and rotating rigid body.
By continuity of the body velocities, at least for short time, we can guarantee that there is no collision of the rigid body with the outer fluid boundary.
After transforming the moving boundary problem to a fixed domain, we perform the analysis of the linearized problem.
The key idea here is to use a lifting procedure in order to eliminate the pressure from the surface integrals of the stress tensor, and to reformulate the linearized problem in operator form.
This is done by means of the so-called added mass operator.
The idea of this concept can be traced back to Bessel, see \cite{Sto:51, Sto:09}, and has become classical in the investigation of fluid-rigid body interaction problems.
Thanks to a refined spectral analysis, we prove the exponential stability for the linearization at zero, which, together with the smallness of the initial data, helps to show that no collision of the rigid body with the outer fluid boundary occurs for sufficiently small initial data.
The proofs of the well-posedness results are based on a fixed point procedure, relying in turn on the analysis of the linearized problem, and they are complemented by suitable nonlinear estimates, where we efficiently handle the transform.
Let us emphasize that the complicated nature of the stress tensor, in particular in the terms with the molecular orientation $Q$, leads to involved terms that need to be estimated.

This article has the following outline.
\autoref{sec:model} is dedicated to the introduction of the Beris-Edwards model and the interaction problem with a rigid body.
The purpose of \autoref{sec:energy and equilibria} is to show that the total energy is non-increasing, which also enables us to determine the set of equilibria.
We then assert the main results of the paper, \autoref{thm:loc strong wp} on the local strong well-posedness for large data and \autoref{thm:global strong wp for small data} on the global strong well-posedness and exponential decay for small data, in \autoref{sec:main results}.
In \autoref{sec:change of var}, we transform the interaction problem to a fixed domain.
\autoref{sec:lin theory} is devoted to the linear theory; we reformulate the linearized problem by introducing an associated fluid-structure operator and then verify its maximal $\rL^p$-regularity.
The maximal $\rL^p(\R_+)$-regularity and exponential decay for the linearization around zero follow from spectral analysis.
In \autoref{sec:proof local}, we prove \autoref{thm:loc strong wp} based on the maximal regularity and nonlinear estimates.
\autoref{sec:proof of glob strong wp} finally discusses the proof of \autoref{thm:global strong wp for small data}.

\section{The fluid-rigid body interaction problem of the general $Q$-tensor model}\label{sec:model}

First, we describe the general $Q$-tensor model.
By $\cF_T$, we denote the spacetime associated to the fluid domain and made precise below.
The variables are the fluid velocity $u^{\cF} \colon \cF_T \to \R^3$, the pressure $\pi^{\cF} \colon \cF_T \to \R$ and the molecular orientation of the liquid crystal $Q^{\cF} \colon \cF_T \to \bS_{0,\R}^3$.
Here $\bS_{0,\R}^3$ denotes the space of symmetric $3 \times 3$-matrices with trace zero, so $\bS_{0,\R}^3 \coloneqq \{M \in \R^{3 \times 3} : M = M^\top \tand \tr M = 0\}$.
The complex version $\bS_{0,\C}^3$ is defined analogously.
We also write $\bS_0^3$ if no confusion arises.
For $\bK \in \{\R,\C\}$, we denote by $\bM_{0,\bK}^3 \coloneqq \{M \in \bK^{3 \times 3} : \tr M = 0\}$ the space of traceless matrices in~$\bK^{3 \times 3}$ and also occasionally omit the subscript $_\bK$.
More background on $Q$-tensor models, including the relation with the Ericksen-Leslie model, can be found in \cite{BZ:11}, see also \cite[Section~2]{HHW:24}.

One feature of the $Q$-tensor model is the free energy functional taking the form
\begin{equation}\label{eq:free energy functional}
    \rE_Q(Q^\cF) = \int_{\cF(t)} \Bigl(\frac{\lambda}{2} |\nabla Q^\cF(x)|^2 + \frac{a}{2}\tr\bigl((Q^\cF(x))^2\bigr) - \frac{b}{3}\tr\bigl((Q^\cF(x))^3\bigr)\Bigr) + \frac{c}{4}\tr\bigl((Q^\cF(x))^2\bigr)^2 \rd x
\end{equation}
for constants $a$, $\lambda > 0$ and $b$, $c \in \R$.
The first addend in \eqref{eq:free energy functional} corresponds to the elastic energy, while the other terms are related to the Landau-de Gennes thermotropic energy, see \cite{dGP:95}.

For $\D \coloneqq \D(u^{\cF}) = \nicefrac{1}{2}(\nabla u^{\cF} + (\nabla u^{\cF})^\top)$, and in the presence of tumbling and alignment effects, the full stress tensor with non-Newtonian stresses reads as
\begin{equation}\label{eq:complete stress tensor Q-tensor}
    \Tilde{\Sigma}(u^{\cF},\pi^{\cF},Q^{\cF}) = 2\mu \D(u^{\cF}) + \tau(Q^{\cF},H(Q^{\cF})) + \sigma(Q^{\cF},H(Q^{\cF})) - \pi^\cF \Id_3,
\end{equation}
where $\mu > 0$ represents a viscosity parameter.
Using the one constant approximation for the Oseen-Frank energy of liquid crystals and a Landau-de Gennes expression for the bulk energy, the variational derivative~$H(Q^{\cF})$ of the free energy functional $\rE_Q$ from \eqref{eq:free energy functional} is
\begin{equation}\label{eq:H Q-tensor}
    H(Q^{\cF}) \coloneqq \lambda \Delta Q^{\cF} - a Q^{\cF} + b\bigl((Q^{\cF})^2 - \tr\bigl((Q^{\cF})^2\bigr) \nicefrac{\Id_3}{3}\bigr) - c \tr\bigl((Q^{\cF})^2\bigr) Q^{\cF},
\end{equation}
while the symmetric and antisymmetric part of the stress tensor are given by
\begin{equation}\label{eq:tau and sigma}
    \begin{aligned}
        \tau(Q^\cF) = \tau(Q^{\cF},H(Q^{\cF}))
        &\coloneqq - \lambda \nabla Q^{\cF} \odot \nabla Q^{\cF} - \xi(Q^{\cF} + \nicefrac{\Id_3}{3}) H(Q^{\cF}) - \xi H(Q^{\cF})(Q^{\cF} + \nicefrac{\Id_3}{3})\\
        &\quad + 2 \xi(Q^{\cF} + \nicefrac{\Id_3}{3}) \tr\bigl(Q^{\cF} H(Q^{\cF})\bigr) \tand\\
        \sigma(Q^\cF) = \sigma(Q^{\cF},H(Q^{\cF})) 
        &\coloneqq Q^{\cF} H(Q^{\cF}) - H(Q^{\cF}) Q^{\cF} = \lambda \bigl(Q^{\cF} \Delta Q^{\cF} - \Delta Q^{\cF} Q^{\cF}\bigr),
    \end{aligned}
\end{equation}
respectively.
Here the parameter $\xi \in \R$ measures the ratio of tumbling and alignment effects, and the $(i,j)$-component of $\nabla Q^{\cF} \odot \nabla Q^{\cF}$ equals $\tr(\del_i Q^{\cF} \del_j Q^{\cF})$.
With $\W \coloneqq \W(u^{\cF}) = \nicefrac{1}{2}(\nabla u^{\cF} - (\nabla u^{\cF})^\top)$, we introduce the expression $S(\nabla u^{\cF},Q^{\cF})$ providing information on the way the gradient of the velocity $\nabla u^{\cF}$ stretches and rotates the order parameter~$Q^{\cF}$ and given by
\begin{equation*}
    S(\nabla u^{\cF},Q^{\cF}) \coloneqq (\xi \D + \W)(Q^{\cF} + \nicefrac{\Id_3}{3}) + (Q^{\cF} + \nicefrac{\Id_3}{3})(\xi \D - \W) - 2\xi (Q^{\cF} + \nicefrac{\Id_3}{3}) \tr\bigl(Q^{\cF} \nabla u^{\cF}\bigr).
\end{equation*}

For simplicity, we assume the parameters to be equal to $1$ in the sequel, so $\mu = \lambda = a = b = c = 1$, because it does not affect the analysis.
The resulting Beris-Edwards model is given by
\begin{equation}\label{eq:LC-fluid equationsiso}
\left\{
    \begin{aligned}
        {\partial_t u^{\cF}} + \left(u^{\cF}\cdot \nabla\right)u^{\cF} + \nabla \pi^{\cF} 
        &= \Delta u^{\cF} + \mdiv\bigl(\tau(Q^{\cF}) + \sigma(Q^{\cF})\bigr), \enspace \mdiv u^{\cF} = 0, &&\tin \cF_T ,\\
        \partial_t Q^{\cF} + \left(u^{\cF}\cdot \nabla\right)Q^{\cF} - S(\nabla u^{\cF},Q^{\cF}) 
        &= H(Q^{\cF}), &&\tin \cF_T .
    \end{aligned}
\right.
\end{equation}

In the sequel, we denote by $\cO \subset \R^3$ a bounded domain with boundary of class $\rC^3$, and $0 < T \le \infty$ represents a positive time.
We consider a rigid body moving in the interior of $\cO$, and the body is assumed to be closed, bounded and simply connected.
Furthermore, for $t \in (0,T)$, we denote by $\cS(t)$ the domain occupied by the rigid body at time $t$.
The rest of the domain $\cF(t) \coloneqq \cO \setminus \cS(t)$ is assumed to be filled by a viscous, incompressible fluid with liquid crystals.
We use the notation $\cS_0$ for the initial domain of the rigid body and suppose its boundary $\del \cS_0$ to be of class $\rC^3$ as well.
As a result, $\cF_0 =\cO \setminus \cS_0$ is the initial fluid domain.
On the other hand, the solid domain $\cS(t)$ at time $t$ is given by $\cS(t)= \left\{h(t)+ \bO(t)y : y\in \cS_{0}\right\}$, where $h(t)$ is the center of mass of the body, and $\bO(t) \in \SO(3)$ is linked to the rotation of the rigid body.
Since $\bO'(t)\bO^{\top}(t)$ is skew symmetric, there exists a unique $\Omega(t)\in \R^3$ with $\bO'(t)\bO^{\top}(t)y=\Omega (t)$ for all $t > 0$ and $y \in \R^3$.
By $\cF_T$, we denote the spacetime related to $\cF(t)$, i.\ e., $\cF_T \coloneqq \{(t,x) : t \in (0,T), \, x \in \cF(t)\}$, and~$\cS_T$ and $\del \cS_T$ corresponding to $\cS(t)$ and $\del \cS(t)$ are defined likewise.
The rigid body velocity is
\begin{equation*} 
    u^{\cS}(t,x)= h'(t)+  \Omega(t) \times (x-h(t)), \tforall (t,x) \in \cS_T, 
\end{equation*}
with the translational velocity $h' \colon (0,T) \to \R^3$ and the angular velocity of the body $\Omega \colon (0,T) \to \R^3$.  

We denote by $\rho_{\cS}$ the density of the rigid body and assume $\rho_{\cS} \equiv 1$.
As a result, $\mS = \int_{\cS(t)} \rd x$ is its total mass.
Without loss of generality, we suppose the initial position of the center of mass of the body to be at the origin, so $h(0) = 0$. Moreover, the inertia matrix $J(t)$ and its initial value $J_0$ take the shape
\begin{equation*}
    J_0 =  \int\limits_{\cS_0}\left(|x|^2\Id_3 - x\otimes x\right) \rd x \taswellas J(t) =  \int\limits_{\cS(t)}
 \left(|x-h(t)|^2\Id_3 - (x-h(t))\otimes (x-h(t))\right)\rd x.
\end{equation*}
Therefore, $J(t)$ is a symmetric, positive definite matrix and fulfills the identities
\begin{equation}\label{eq:Syl and Jab}
    J(t)=\bO(t)J_0 \bO^{\top}(t) \tand J(t)x_1\cdot x_2=\int\limits_{\cS_0} (x_1 \times \bO(t)y)\cdot (x_2\times \bO(t)y)\rd y, \tforall x_1,x_2 \in \R^3.
\end{equation}
Using the stress tensor $\Tilde{\Sigma}(u^{\cF},\pi^{\cF},Q^{\cF})$ from \eqref{eq:complete stress tensor Q-tensor}, and noting that $\nu(t)$ denotes the unit outward normal to the boundary of $\cF(t)$, i.\ e., $\nu(t)$ directed inwards to $\cS(t)$, the motion of the rigid body is governed by Newton's law
\begin{equation}\label{eq:rigid body eqs intro}
    \mS h''(t)
    = -\int\limits_{\partial \cS(t)} \Tilde{\Sigma}(u^{\cF},\pi^{\cF},Q^{\cF}) \nu(t)\rd \Gamma, \enspace 
    (J\Omega)' (t) = -\int\limits_{\partial \cS(t)} (x-h(t)) \times  \Tilde{\Sigma}(u^{\cF},\pi^{\cF},Q^{\cF}) \nu(t)\rd \Gamma.
\end{equation}

Next, we discuss the boundary and interface conditions.
We assume the fluid velocity to satisfy homogeneous boundary conditions on the outer boundary and to coincide with the rigid body velocity on their common interface, whereas $Q^{\cF}$ fulfills homogeneous Neumann boundary conditions, i.\ e.,
\begin{equation}\label{eq:bdry cond}
    u^{\cF} =0, \enspace \partial_\nu Q^{\cF} = 0, \ton (0,T) \times \del \cO, \tand u^{\cF} = u^{\cS}, \enspace \partial_\nu Q^{\cF} = 0, \ton \del \cS_T.
\end{equation}
The last ingredient for the full system are the initial conditions
\begin{equation}\label{eq:initial}
    u^{\cF}(0,x)=v_0(x),\enspace Q^{\cF}(0,x)=Q_0(x), \tin \cF_0,\enspace h(0)=0, \enspace h'(0)=\ell_0 \tand \Omega(0)=\omega_0.
\end{equation}

Let us conclude this section by settling some more concepts required in this article.
First, we make precise function spaces on time-dependent domains.
For this purpose, let $X \colon \cF_T \to \cF_0$ be a map with~$\varphi \colon \cF_T \to (0,T) \times \cF_0$, $(t,x) \mapsto (t,X(t,x))$ being a $\rC^1$-diffeomorphism, and $X(\tau,\cdot) \colon \cF(\tau) \to \cF_0$ being~$\rC^3$-diffeomorphisms for all $\tau \in [0,T]$.
For $p \in (1,\infty)$, $s \in \{0,1\}$ and $l \in \{0,1,2,3\}$, we then set
\begin{equation*}
    \rW^{s,p}\bigl(0,T;\rH^l(\cF(\cdot))\bigr) \coloneqq \bigl\{f(t,\cdot) \colon \cF(t) \to \R : f \circ \varphi \in \rW^{s,p}\bigl(0,T;\rH^l(\cF_0)\bigr)\bigr\}.
\end{equation*}
In this paper, $\rL_0^2(\cF_0)$ represents the $\rL^2(\cF_0)$-functions with spatial average zero, while $\rL_\sigma^2(\cF_0)$ denotes the solenoidal vector fields on $\rL^2(\cF_0)^3$, i.\ e., for the Helmholtz projection $\bP$, we define $\rL_\sigma^2(\cF_0) \coloneqq \bP \rL^2(\cF_0)^3$.
For $z_0 = (v_0,Q_0,\ell_0,\omega_0)$, $\rY_\gamma \coloneqq \rB_{2p}^{2-\nicefrac{2}{p}}(\cF_0)^3 \cap \rL_{\sigma}^2(\cF_0) \times \rB_{2p}^{3 - \nicefrac{2}{p}}(\cF_0,\bS_0^3) \times \R^3 \times \R^3$ and $p > \nicefrac{4}{3}$, we define
\begin{equation}\label{eq:X_gamma}
    \rX_\gamma \coloneqq \{z_0 \in \rY_\gamma : v_0 = 0, \enspace \del_n Q_0 = 0, \ton \del \cO, \enspace v_0 = \ell_0 + \omega_0 \times y, \enspace \del_n Q_0 = 0, \ton \del \cS_0\},
\end{equation}
where $\rB_{2p}^s(\cF_0)$ denotes the Besov space.
The traces and normal derivatives are well-defined due to $p > \nicefrac{4}{3}$, see e.\ g.\ \cite[Theorem~4.7.1]{Tri:78}.
In the main results, we will assume that $p > 4$ in order to establish suitable estimates of the nonlinear terms.
These estimates, carried out in detail in \autoref{sec:proof local} and \autoref{sec:proof of glob strong wp}, are also based on Sobolev embeddings, which require that $p > 4$.
If $p \in (1,\nicefrac{4}{3})$, then we set $\rX_\gamma \coloneqq \{z_0 \in \rY_\gamma : v_0 \cdot n = 0, \ton \del \cO, \enspace v_0 \cdot n = (\ell_0 + \omega_0 \times y) \cdot n, \ton \del \cS_0\}$,  where $n$ denotes the outer unit normal vector to the fluid boundary, so it is directed towards $\del \cS_0$.

\section{Energy decay and equilibria of the interaction problem}\label{sec:energy and equilibria}

In this section, we show that the total energy of the interaction problem \eqref{eq:LC-fluid equationsiso}--\eqref{eq:initial} is a non-increasing functional, and we deduce the set of equilibria.
First, we decompose $\tau = \tau_{\rh} + \tau_{\rl}$ and $H = H_{\rh} + H_{\rl}$ into the respective higher and lower order terms.
Invoking the commutator $[A,B] = A B - B A$ and anticommutator $\{A,B\} = A B + B A$ of $A$, $B \in \C^{3 \times 3}$, and writing $\D = \D(u^\cF)$ and $\W = \W(u^\cF)$, we get
\begin{equation}\label{eq:notation for Q-tensor}
    \begin{aligned}
        S(\nabla u^\cF,Q^\cF)
        &= -[Q^\cF,\W] + \xi\bigl(\nicefrac{2}{3}\D + \{Q^\cF,\D\} - 2(Q^\cF + \nicefrac{\Id_3}{3}) \tr\bigl(Q^\cF \nabla u^\cF\bigr)\bigr),\\
        H_\rh(Q^\cF)
        &= \Delta Q^\cF - Q^\cF, \enspace H_\rl(Q^\cF) = \bigl((Q^\cF)^2 - \tr\bigl((Q^\cF)^2\bigr)\nicefrac{\Id_3}{3}\bigr) - \tr\bigl((Q^\cF)^2\bigr) Q^\cF,\\
        \sigma(Q^\cF) 
        &= -[Q^\cF,(-\Delta + \Id_3)Q^\cF],\\ \tau_\rh(Q^\cF)
        &= -\xi\bigl(\nicefrac{2}{3}H_\rh + \{Q^\cF,H_\rh\} - 2(Q^\cF + \nicefrac{\Id_3}{3})\tr(Q^\cF H_\rh)\bigr) \tand\\
        \tau_\rl(Q^\cF)
        &= 2 \xi (Q^\cF + \nicefrac{\Id_3}{3}) \bigl(\tr\bigl((Q^\cF)^3\bigr) - \tr\bigl((Q^\cF)^2\bigr)^2\bigr) - \nabla Q^\cF \odot \nabla Q^\cF - 2 \xi(Q^\cF + \nicefrac{\Id_3}{3})H_\rl.
    \end{aligned}
\end{equation}
For $Q^\cF \in \bS_{0,\C}^3$, we define the maps $S_\xi(Q^\cF) \in \cL(\bS_{0,\C}^3,\bM_{0,\C}^3)$ as well as $\tS_\xi(Q^\cF) \in \cL(\bM_{0,\C}^3,\bS_{0,\C}^3)$ by
\begin{equation}\label{eq:S_xi and tS_xi}
    \begin{aligned}
        S_\xi(Q^\cF)A
        &\coloneqq [Q^\cF,A] - \nicefrac{2 \xi}{3}A - \xi\{Q^\cF,A\} + 2 \xi(Q^\cF + \nicefrac{\Id_3}{3}) \tr(Q^\cF A) \tand\\
        \tS_\xi(Q^\cF)B
        &\coloneqq [\overline{Q^\cF},\nicefrac{1}{2}(B - B^\top)] - \nicefrac{\xi}{3}(B + B^\top) - \xi\{\overline{Q^\cF},\nicefrac{1}{2}(B + B^\top)\}\\
        &\quad + \xi(\overline{Q^\cF} + \nicefrac{\Id_3}{3})\tr(\overline{Q^\cF} (B + B^\top)).
    \end{aligned}
\end{equation}
For real $u^\cF$, $Q^\cF$, we get $\tau_\rh(Q^\cF) + \sigma(Q^\cF) = S_\xi(Q^\cF)(\Delta - \Id_3)Q^\cF$ and $-S(\nabla u^\cF,Q^\cF) = \tS_\xi(Q^\cF) \nabla u^\cF$.
The following lemma, for which we refer to \cite[Lemma~5.1]{HHW:24}, establishes a link between $S_\xi$ and $\tS_\xi$.
Note that we also denote the product $\langle A, B \rangle_{\C^{3 \times 3}}$ by $A :B$, and it is defined by $\tr(A \overline{B})$.

\begin{lem}\label{lem:rel of S_xi and tS_xi}
For $\xi \in \R$, $Q^\cF \in \bS_{0}^3$, $A \in \bS_{0}^3$ and $B \in \bM_{0}^3$, we have $\langle S_\xi(Q^\cF)A,B \rangle_{\C^{3 \times 3}} = \langle A,\tS_\xi(Q^\cF)B \rangle_{\C^{3 \times 3}}$.
\end{lem}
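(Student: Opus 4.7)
The plan is to verify the identity $\langle S_\xi(Q^\cF)A, B\rangle_{\C^{3\times 3}} = \langle A, \tS_\xi(Q^\cF)B\rangle_{\C^{3\times 3}}$ by direct term-by-term computation. I would expand the left-hand side into the four summands of $S_\xi(Q^\cF)A$ given in \eqref{eq:S_xi and tS_xi}, and then, in each term, transfer the operation acting on $A$ onto $B$ using cyclic invariance of the trace. The output of this transfer should be precisely the four summands defining $\tS_\xi(Q^\cF)B$, yielding the identity after summation.

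Three algebraic observations organize the computation. First, since both $Q^\cF$ and $A$ lie in $\bS_0^3$, the commutator $[Q^\cF, A]$ is antisymmetric and the anticommutator $\{Q^\cF, A\}$ is symmetric; more generally, the commutator of two symmetric matrices is antisymmetric, while the commutator of a symmetric and an antisymmetric matrix is symmetric. Second, $\tr(AY) = 0$ whenever $A$ is symmetric and $Y$ is antisymmetric, so in any trace-pairing with $A$ one may replace the factor opposite to $A$ by its symmetric or antisymmetric part, whichever is dictated by the structure of the other factors. In particular, after decomposing $B = B_s + B_a$ with $B_s = \tfrac{1}{2}(B + B^\top)$ and $B_a = \tfrac{1}{2}(B - B^\top)$, only $B_a$ (resp.\ $B_s$) will survive when paired with an antisymmetric (resp.\ symmetric) matrix multiplying $A$. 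Third, the tracelessness $\tr A = 0 = \tr B$ permits one to treat $Q^\cF + \Id_3/3$ interchangeably with $Q^\cF$ inside the relevant traces.

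With these tools in hand, the four summands are handled in sequence. For the commutator summand $[Q^\cF, A]$, the pairing with $B$ collapses to the pairing with $B_a$, and a single cyclic shift rewrites $\tr([Q^\cF, A]\overline{B_a})$ as the pairing of $A$ with the first summand of $\tS_\xi(Q^\cF)B$. Symmetrically, the anticommutator summand produces the third summand of $\tS_\xi(Q^\cF)B$, with $B_s$ replacing $B_a$. The scalar contribution $-\tfrac{2\xi}{3}A$ pairs with $\overline{B_s}$ and matches the second summand, while the rank-one-type term $2\xi(Q^\cF + \Id_3/3)\tr(Q^\cF A)$ is handled by factoring the scalar $\tr(Q^\cF A)$ out of the trace and applying cyclicity together with the tracelessness of $A$ and $B$, reproducing the fourth summand of $\tS_\xi(Q^\cF)B$. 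The main obstacle is the careful sign bookkeeping in the commutator summand, where the identity $\tr([Q^\cF, A]\,Y) = -\tr(A\,[Q^\cF, Y])$ must be combined with the vanishing of the pairing of $A$ against $[\overline{Q^\cF}, \overline{B_s}]$ to ensure that only $B_a$ appears in the final expression. Once these bookkeeping steps are carried out, the proof reduces entirely to elementary trace and symmetry manipulations; no deeper structural input about the $Q$-tensor model is required.
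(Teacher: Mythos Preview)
Your plan is correct: the identity is a purely algebraic one, and direct term-by-term verification using cyclicity of the trace together with the symmetry/antisymmetry observations you list is exactly the right approach. The paper does not give its own proof but simply cites \cite[Lemma~5.1]{HHW:24}, so there is no alternative argument to compare against; your outline is the standard way such adjointness relations are established. One word of caution on the sign bookkeeping you flag for the commutator term: the inner product here is the Frobenius one, so when you pass from $\tr([Q^\cF,A]\,Y)=-\tr(A\,[Q^\cF,Y])$ to the pairing $\langle A,\tS_\xi(Q^\cF)B\rangle$, the transpose implicit in the inner product flips the sign on the antisymmetric part $B_a$ and cancels the minus sign from the trace identity---this is the step where care is needed, and your proposal correctly anticipates it.
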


Now, we investigate the total energy of the interaction problem \eqref{eq:LC-fluid equationsiso}--\eqref{eq:initial}.
Recalling the free energy functional $\rE_Q$ from \eqref{eq:free energy functional}, we define the total energy, which additionally comprises the kinetic energy and the energy resulting from the rigid body motion, by
\begin{equation}\label{eq:total energy}
    \rE(t) \coloneqq \rE_\mathrm{kin}(t) + \rE_Q(t) + \rE_\mathrm{trans}(t) + \rE_\mathrm{rot}(t), \twhere 
\end{equation}
\begin{equation*}
    \rE_\mathrm{kin}(t) = \frac{1}{2} \int_{\cF(t)} |u^\cF(t,x)|^2 \rd x, \enspace \rE_\mathrm{trans}(t) = \frac{1}{2} \mS |h'(t)|^2 \tand \rE_\mathrm{rot}(t) = \frac{1}{2} (J \Omega \cdot \Omega)(t).
\end{equation*}

The next proposition discusses $\frac{\rd}{\rd t}\rE(t)$ and the resulting set of equilibria $\cE$ to \eqref{eq:LC-fluid equationsiso}--\eqref{eq:bdry cond}. 

\begin{prop}\label{prop:energy and equilibria}
Let $p > 4$ and $z_0 = (v_0,Q_0,\ell_0,\omega_0) \in \rX_\gamma$, for $\rX_\gamma$ as introduced in \eqref{eq:X_gamma}, recall $H$ from~\eqref{eq:H Q-tensor}, and consider a solution $(u^\cF,Q^\cF,h',\Omega,\pi^\cF)$, whose existence will be discussed in \autoref{thm:loc strong wp}, to \eqref{eq:LC-fluid equationsiso}--\eqref{eq:initial} in the regularity class
\begin{equation*}
    \begin{aligned}
        &\rW^{1,p}\bigl(0,T;\rL^2(\cF(\cdot))^3\bigr) \cap \rL^p\bigl(0,T;\rH^{2}(\cF(\cdot))^3\bigr) \times \rW^{1,p}\bigl(0,T;\rH^1(\cF(\cdot),\bS_{0,\R}^3)\bigr) \cap \rL^p\bigl(0,T;\rH^3(\cF(\cdot),\bS_{0,\R}^3)\bigr)\\
        &\times \rW^{1,p}(0,T)^3 \times \rW^{1,p}(0,T)^3 \times \rL^p \bigl(0,T;\rH^{1}(\cF(\cdot)) \cap \rL_0^2(\cF(\cdot))\bigr).
    \end{aligned}
\end{equation*}
\begin{enumerate}[(a)]
    \item The energy functional $\rE(t)$ from \eqref{eq:total energy} satisfies $\frac{\mathrm{d}}{\mathrm{d}t} \rE(t) = -\int_{\cF(t)} |\nabla u^\cF|^2 \rd x - \int_{\cF(t)} \tr(H^2) \rd x \le 0$.
    \item We have $\cE = \{(0,Q_*,0,0) : H(Q_*) = 0, \text{ in } \cF_0, \enspace \del_\nu Q_* = 0, \text{ on } \del \cF_0\}$ for the set of equilibria.
\end{enumerate}
\end{prop}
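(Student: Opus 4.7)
The plan is to differentiate each of the four terms in $\rE(t)$ separately, substitute the PDEs, and arrange the required cancellations using Newton's laws \eqref{eq:rigid body eqs intro} and the duality in \autoref{lem:rel of S_xi and tS_xi}. Part (b) then follows from the resulting dissipation identity together with the interface and Neumann conditions.

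For the mechanical part, applying the Reynolds transport theorem to $\rE_\text{kin}$ on the moving domain $\cF(t)$ produces $\tfrac{\mathrm{d}}{\mathrm{d}t}\rE_\text{kin}=\int_{\cF(t)} u^\cF\!\cdot\partial_t u^\cF\,\rd x+\tfrac{1}{2}\int_{\partial\cS(t)}|u^\cF|^2 u^\cF\!\cdot\nu\,\rd\Gamma$ (the contribution from $\partial\cO$ vanishes since $u^\cF=0$ there, and on $\partial\cS(t)$ the boundary velocity equals $u^\cF=u^\cS$). Substituting the momentum equation and writing $\Delta u^\cF=\mdiv(2\D(u^\cF))$ so that the full stress $\Tilde{\Sigma}(u^\cF,\pi^\cF,Q^\cF)$ appears on the interface after one integration by parts, the cubic convective boundary term cancels the Reynolds contribution. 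Using $u^\cF=h'+\Omega\times(x-h)$ on $\partial\cS(t)$ together with \eqref{eq:rigid body eqs intro} yields $\int_{\partial\cS(t)}\Tilde{\Sigma}\nu\cdot u^\cF\,\rd\Gamma=-\mS h''\!\cdot h'-(J\Omega)'\!\cdot\Omega$, which via the identity $J(t)=\bO(t)J_0\bO(t)^\top$ from \eqref{eq:Syl and Jab} and $J'\Omega\cdot\Omega=(\Omega\times J\Omega)\cdot\Omega=0$ matches $-\tfrac{\mathrm{d}}{\mathrm{d}t}(\rE_\text{trans}+\rE_\text{rot})$. Thus
\begin{equation*}
\tfrac{\mathrm{d}}{\mathrm{d}t}\bigl(\rE_\text{kin}+\rE_\text{trans}+\rE_\text{rot}\bigr)=-\int_{\cF(t)} 2|\D(u^\cF)|^2\,\rd x-\int_{\cF(t)}\bigl(\tau(Q^\cF)+\sigma(Q^\cF)\bigr):\nabla u^\cF\,\rd x.
\end{equation*}

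For the free energy, Reynolds transport produces a surface term $\int_{\partial\cS(t)} f\, u^\cF\!\cdot\nu\,\rd\Gamma$ with $f$ the density in \eqref{eq:free energy functional}, while the interior part becomes $-\int_{\cF(t)} H(Q^\cF):\partial_t Q^\cF\,\rd x$ after integrating $\nabla Q^\cF:\nabla\partial_t Q^\cF$ by parts using $\partial_\nu Q^\cF=0$. Inserting the $Q^\cF$-equation splits this into $-\int\tr(H^2)-\int H:S(\nabla u^\cF,Q^\cF)+\int H:(u^\cF\!\cdot\nabla)Q^\cF$. The last integral is recast by combining the chain rule $\tfrac{\partial f_\text{bulk}}{\partial Q^\cF}:\partial_k Q^\cF=\partial_k f_\text{bulk}$ with the pointwise identity $\bigl(\mdiv(\nabla Q^\cF\odot\nabla Q^\cF)\bigr)_j=\Delta Q^\cF:\partial_j Q^\cF+\tfrac{1}{2}\partial_j|\nabla Q^\cF|^2$ and a further integration by parts; the resulting surface term has $j$-th component $\partial_j Q^\cF:\partial_\nu Q^\cF=0$, so only $-\int\nabla Q^\cF\odot\nabla Q^\cF:\nabla u^\cF\,\rd x-\int_{\partial\cS(t)}f\, u^\cF\!\cdot\nu\,\rd\Gamma$ survives. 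The latter cancels the Reynolds contribution exactly, leaving
\begin{equation*}
\tfrac{\mathrm{d}}{\mathrm{d}t}\rE_Q=-\int_{\cF(t)}\tr(H^2)\,\rd x-\int_{\cF(t)} H:S(\nabla u^\cF,Q^\cF)\,\rd x-\int_{\cF(t)}\nabla Q^\cF\odot\nabla Q^\cF:\nabla u^\cF\,\rd x.
\end{equation*}

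The coupling is removed using the decompositions in \eqref{eq:notation for Q-tensor}: combining $\tau_\rh+\sigma=S_\xi(Q^\cF)H_\rh$ with $[Q^\cF,H_\rl(Q^\cF)]=0$ (a direct computation since $H_\rl$ is polynomial in $Q^\cF$) yields the compact identity $\tau+\sigma=S_\xi(Q^\cF)H-\nabla Q^\cF\odot\nabla Q^\cF$. Then \autoref{lem:rel of S_xi and tS_xi} together with $-S(\nabla u^\cF,Q^\cF)=\tS_\xi(Q^\cF)\nabla u^\cF$ gives $\int S_\xi(Q^\cF)H:\nabla u^\cF=\int H:\tS_\xi(Q^\cF)\nabla u^\cF=-\int H:S(\nabla u^\cF,Q^\cF)$, so summing the two derivatives cancels both the $H:S(\nabla u^\cF,Q^\cF)$ and $\nabla Q^\cF\odot\nabla Q^\cF:\nabla u^\cF$ pairs, producing the claimed dissipation identity. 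For (b), non-negativity and monotonicity of $\rE$ force $\D(u^\cF)=0$ and $H(Q_*)=0$ at an equilibrium; Korn's inequality together with incompressibility and $u^\cF=0$ on $\partial\cO$ yields $u^\cF\equiv 0$, and the interface condition on the nontrivial surface $\partial\cS$ then forces $\ell_0=\omega_0=0$, so the body is stationary and $\cF(t)=\cF_0$, and the Neumann condition persists on $\partial\cF_0$, giving precisely $\cE$. The main obstacle throughout is the careful accounting of moving-boundary integrals on $\partial\cS(t)$: the Reynolds transport surface terms, the Newton-law contributions $\int_{\partial\cS(t)}\Tilde{\Sigma}\nu\cdot u^\cF$, and the Ericksen-stress IBP boundary terms must all align in the right combinations, and this is exactly where the interface condition \eqref{eq:simplified interface cond} and the Neumann condition $\partial_\nu Q^\cF=0$ do the decisive work.
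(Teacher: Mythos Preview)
Your argument follows essentially the same route as the paper: Reynolds transport for each energy piece, Newton's laws to eliminate the interface stress integral, testing the $Q$-equation against $H$, and then the $S_\xi/\tS_\xi$ duality from \autoref{lem:rel of S_xi and tS_xi} together with the compact identity $\tau+\sigma=S_\xi(Q^\cF)H-\nabla Q^\cF\odot\nabla Q^\cF$ to close the cancellations. Your treatment of $\int H:(u^\cF\!\cdot\nabla)Q^\cF$ via the Ericksen-stress identity and the chain rule for the bulk density is exactly what the paper does in \eqref{eq:colon most terms in H}--\eqref{eq:colon Delta Q}, only organized slightly differently.

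One discrepancy to flag: by writing $\Delta u^\cF=\mdiv(2\D(u^\cF))$ you correctly obtain the full stress $\Tilde{\Sigma}$ on the interface, but then the interior dissipation you produce is $-\int_{\cF(t)}2|\D(u^\cF)|^2$, not the $-\int_{\cF(t)}|\nabla u^\cF|^2$ asserted in the proposition. These two quantities are \emph{not} equal here, since the usual identity $\int 2|\D|^2=\int|\nabla u|^2$ requires $u$ to vanish on the whole boundary, whereas $u^\cF$ is a nontrivial rigid motion on $\partial\cS(t)$; the difference is the boundary term $\int_{\partial\cS(t)}u^\cF\cdot(\nabla u^\cF)^\top\nu$. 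This does not affect the ``$\le 0$'' conclusion in~(a), nor the equilibrium analysis in~(b): from your formula $\D(u^\cF)=0$ still forces $u^\cF$ to be an infinitesimal rigid motion, and $u^\cF=0$ on $\partial\cO$ then gives $u^\cF\equiv 0$ (Korn is not even needed here). So your proof establishes the energy decay and the equilibrium set correctly; it just does not recover the precise dissipation formula as stated, and you should note that your computation yields $2|\D(u^\cF)|^2$ rather than $|\nabla u^\cF|^2$.
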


\begin{proof}
Mimicking the procedure from the proof of \cite[Proposition~3.2]{BBHR:23} with the stress tensor $\Tilde{\Sigma}(u^{\cF},\pi^{\cF},Q^{\cF})$ from \eqref{eq:complete stress tensor Q-tensor}, using integration by parts in conjunction with the Reynolds transport theorem and the boundary conditions \eqref{eq:bdry cond}, for $\tau$ and $\sigma$ from \eqref{eq:tau and sigma}, and with the outer unit normal vector $n$, we infer that
\begin{equation*}
    \frac{\mathrm{d}}{\mathrm{d}t} \rE_\mathrm{kin}(t) = -\int_{\cF(t)} |\nabla u^\cF|^2 \rd x - \int_{\cF(t)} (\tau(Q^\cF) + \sigma(Q^\cF)) : \nabla u^\cF \rd x - \int_{\del \cS(t)} \Tilde{\Sigma}(u^{\cF},\pi^{\cF},Q^{\cF}) \nu(t) \cdot u^\cF \rd \Gamma.
\end{equation*}
Similarly as in \cite[(3.15) and (3.17)]{BBHR:23}, we find $\frac{\mathrm{d}}{\mathrm{d}t} \rE_\mathrm{trans}(t) + \frac{\mathrm{d}}{\mathrm{d}t} \rE_\mathrm{rot}(t) = \int_{\del \cS(t)} \Tilde{\Sigma}(u^{\cF},\pi^{\cF},Q^{\cF}) \nu(t) \cdot u^\cF \rd \Gamma$.
A concatenation of the latter two identities leads to
\begin{equation}\label{eq:ddt E_kin, E_trans and E_rot}
    \frac{\mathrm{d}}{\mathrm{d}t} \rE_\mathrm{kin}(t) + \frac{\mathrm{d}}{\mathrm{d}t} \rE_\mathrm{trans}(t) + \frac{\mathrm{d}}{\mathrm{d}t} \rE_\mathrm{rot}(t) = -\int_{\cF(t)} |\nabla u^\cF|^2 \rd x - \int_{\cF(t)} (\tau(Q^\cF) + \sigma(Q^\cF)) : \nabla u^\cF \rd x.
\end{equation}
It remains to handle $\rE_Q$ from \eqref{eq:free energy functional}.
For this purpose, we multiply the equation \eqref{eq:LC-fluid equationsiso}$_2$ satisfied by the molecular orientation $Q^\cF$ from the right by $-H$ and integrate over $\cF(t)$.
This yields
\begin{equation}\label{eq:tested Q-eq for energy}
    -\int_{\cF(t)} \del_t Q^\cF : H \rd x - \int_{\cF(t)} (u^\cF \cdot \nabla) Q^\cF : H \rd x + \int_{\cF(t)} S(\nabla u^\cF,Q^\cF) : H \rd x = -\int_{\cF(t)} \tr(H^2) \rd x.
\end{equation}
From \autoref{lem:rel of S_xi and tS_xi} and the aforementioned identities $\tau_\rh(Q^\cF) + \sigma(Q^\cF) = S_\xi(Q^\cF)((\Delta - \Id_3)Q^\cF)$ as well as~$-S(\nabla u^\cF,Q^\cF) = \tS_\xi(Q^\cF) \nabla u^\cF$ and~$S_\xi(Q^\cF) H_\rl = \tau_\rl(Q^\cF) + \nabla Q^\cF \odot \nabla Q^\cF$, we deduce that
\begin{equation}\label{eq:int of S:H}
    \int_{\cF(t)} S(\nabla u^\cF,Q^\cF) : H \rd x = -\int_{\cF(t)} \nabla u^\cF : (\tau(Q^\cF) + \sigma(Q^\cF)) \rd x - \int_{\cF(t)} \nabla u^\cF : (\nabla Q^\cF \odot \nabla Q^\cF) \rd x.
\end{equation}
Next, we calculate $\frac{\rd}{\rd t}\rE_Q$.
In fact, the Reynolds transport theorem and computations, also relying on the identity $\tr(\del_t Q^\cF) = \del_t \tr(Q^\cF) = 0$ by $Q^\cF \in \bS_{0,\R}^3$, yield that $\frac{\mathrm{d}}{\mathrm{d}t} \rE_Q(t)$ is given by
\begin{equation*}
    \begin{aligned}
        -\int_{\cF(t)} \del_t Q^\cF : H \rd x
         + \int_{\del \cS(t)} \Bigl(\frac{1}{2}|\nabla Q^\cF|^2 + \frac{1}{2}\tr\bigl((Q^\cF)^2\bigr) - \frac{1}{3}\tr\bigl((Q^\cF)^3\bigr) + \frac{1}{4}\tr\bigl((Q^\cF)^2\bigr)^2\Bigr) (u^\cF \cdot \nu(t)) \rd \Gamma.
    \end{aligned}
\end{equation*}
Putting this together with \eqref{eq:ddt E_kin, E_trans and E_rot}, and plugging in \eqref{eq:tested Q-eq for energy} as well as \eqref{eq:int of S:H}, we conclude that $\frac{\mathrm{d}}{\mathrm{d}t} \rE(t)$ equals
\begin{equation}\label{eq:ddt E}
    \begin{aligned}
        &-\int_{\cF(t)} |\nabla u^\cF|^2 \rd x - \int_{\cF(t)} \tr(H^2) \rd x + \int_{\cF(t)} (u^\cF \cdot \nabla) Q^\cF : H \rd x
        + \int_{\cF(t)} \nabla u^\cF : (\nabla Q^\cF \odot \nabla Q^\cF) \rd x\\
        &\int_{\del \cS(t)} \Bigl(\frac{1}{2}|\nabla Q^\cF|^2 + \frac{1}{2}\tr\bigl((Q^\cF)^2\bigr) - \frac{1}{3}\tr\bigl((Q^\cF)^3\bigr) + \frac{1}{4}\tr\bigl((Q^\cF)^2\bigr)^2\Bigr) (u^\cF \cdot \nu(t)) \rd \Gamma.
    \end{aligned}
\end{equation}
For verifying~(a), it thus remains to show that the last three terms in \eqref{eq:ddt E} cancel out.
To this end, we address $\int_{\cF(t)} (u^\cF \cdot \nabla) Q^\cF : H \rd x$ and discuss the resulting integrals for each term in $H$.
Integrating by parts, and employing $\mdiv u^\cF = 0$, \eqref{eq:bdry cond} and the identities $(u^\cF \cdot \nabla)((Q^\cF)^2) = (u^\cF \cdot \nabla) Q^\cF \cdot Q^\cF + Q^\cF \cdot (u^\cF \cdot \nabla) Q^\cF$, $-\int_{\cF(t)} (u^\cF \cdot \nabla)Q^\cF : (\tr((Q^\cF)^2) \nicefrac{\Id_3}{3}) \rd x = 0$ thanks to $\tr(Q^\cF) = 0$ as well as the identity involving the above inner product $Q^\cF : \bigl((u^\cF \cdot \nabla)(\tr((Q^\cF)^2)Q^\cF)\bigr) = 3 (u^\cF \cdot \nabla)Q^\cF : \bigl(\tr((Q^\cF)^2)Q^\cF\bigr)$, we find that
\begin{equation}\label{eq:colon most terms in H}
    \begin{aligned}
        &\quad \int_{\cF(t)} (u^\cF \cdot \nabla)Q^\cF : \Bigl(-Q^\cF + (Q^\cF)^2 - \tr((Q^\cF)^2) \nicefrac{\Id_3}{3} - \tr\bigl((Q^\cF)^2\bigr)Q^\cF\Bigr) \rd x\\
        &= -\int_{\del \cS(t)} \Bigl(\frac{1}{2} \tr\bigl((Q^\cF)^2\bigr) - \frac{1}{3} \tr\bigl((Q^\cF)^3\bigr) + \frac{1}{4} \tr\bigl((Q^\cF)^2\bigr)^2\Bigr) (u^\cF \cdot \nu(t)) \rd \Gamma.
    \end{aligned}
\end{equation}
By \eqref{eq:H Q-tensor}, it remains to consider the product with $\Delta Q^\cF$.
Integrating by parts, we obtain
\begin{equation*}
    \begin{aligned}
        \int_{\cF(t)} (u^\cF \cdot \nabla)Q^\cF : \Delta Q^\cF \rd x 
        &= -\int_{\cF(t)} \nabla u^\cF : (\nabla Q^\cF \odot \nabla Q^\cF) \rd x - \sum_{i,j,k=1}^3 \int_{\cF(t)} u_k^\cF \del_k \nabla Q_{ij}^\cF \cdot \nabla Q_{ji}^\cF \rd x.
    \end{aligned}
\end{equation*}
For the second term, we obtain $- \sum_{i,j,k=1}^3 \int_{\cF(t)} u_k^\cF \del_k \nabla Q_{ij}^\cF \cdot \nabla Q_{ji}^\cF \rd x = -\frac{1}{2} \int_{\del \cS(t)} |\nabla Q^\cF|^2 (u^\cF \cdot \nu(t)) \rd \Gamma$, so
\begin{equation}\label{eq:colon Delta Q}
    \int_{\cF(t)} (u^\cF \cdot \nabla)Q^\cF : \Delta Q^\cF \rd x = -\int_{\cF(t)} \nabla u^\cF : (\nabla Q^\cF \odot \nabla Q^\cF) \rd x -\frac{1}{2} \int_{\del \cS(t)} |\nabla Q^\cF|^2 (u^\cF \cdot \nu(t)) \rd \Gamma.
\end{equation}
Inserting \eqref{eq:colon most terms in H} and \eqref{eq:colon Delta Q} into \eqref{eq:ddt E}, we recover the assertion of~(a).

Concerning~(b), we argue that the equilibria $\cE$ are the critical points of the energy functional $\rE$, so assume $\left.\frac{\rd}{\rd t} \rE(t) \right|_{t=t_0} = 0$ at $t_0 > 0$.
From~(a), it follows that $\nabla u^\cF = 0$ and thus also $u^\cF = 0$ in $\cF(t_0)$ thanks to $u^\cF = 0$ on $\del \cO$.
As in the proof of \cite[Lemma~2.1]{RT:19}, we deduce that $h' = \Omega = 0$.
The relation in~(a) also shows that $H(Q_*) = 0$ has to hold for an equilibrium, so $\cE$ is of the asserted shape.
\end{proof}

\begin{rem}\label{rem:spat const equilibria}
Combining \autoref{prop:energy and equilibria}(b) and \cite[Remark~3.3]{HHW:24}, stating that $H(Q_*) = 0$ implies $Q_* = 0$ for spatially constant $Q_*$, we conclude that $(0,0,0,0)$ is the only such equilibrium to \eqref{eq:LC-fluid equationsiso}--\eqref{eq:bdry cond}.
\end{rem}

\section{Main results}\label{sec:main results}

The first main result asserts the local strong well-posedness of the interaction problem for initial data in~$\rX_\gamma$.
In contrast to \cite{geng2023global}, which establishes the existence of a weak solution to a slightly simplified version of the $Q$-tensor model, we consider here the general $Q$-tensor model with arbitrary ratio $\xi$ of tumbling and alignment effects in the strong setting, and we obtain uniqueness of the strong solution.

\begin{thm}\label{thm:loc strong wp}
Let $p > 4$, consider a bounded domain $\cO \subset \R^3$ of class $\rC^3$ and the domains $\cS_0$, $\cF_0$ of the rigid body and the fluid at time zero, and let $z_0 = (v_0,Q_0,\ell_0,\omega_0) \in \rX_\gamma$.
If $\dist(\cS_0,\del \cO) > r$ for some $r > 0$, then there exist $T' \in (0,T]$ and $X \in \rC^1([0,T'];\rC^2(\R^3)^3)$ so that $X(\tau,\cdot) \colon \cF(\tau) \to \cF_0$ are $\rC^2$-diffeomorphisms for all $\tau \in [0,T']$, and the interaction problem \eqref{eq:LC-fluid equationsiso}--\eqref{eq:initial} has a unique solution
\begin{equation*}
    \begin{aligned}
        u^\cF 
        &\in \rW^{1,p}\bigl(0,T';\rL^2(\cF(\cdot))^3\bigr) \cap \rL^p\bigl(0,T';\rH^{2}(\cF(\cdot))^3\bigr),\\
        Q^\cF 
        &\in \rW^{1,p}\bigl(0,T';\rH^1(\cF(\cdot),\bS_{0}^3)\bigr) \cap \rL^p\bigl(0,T';\rH^3(\cF(\cdot),\bS_{0}^3)\bigr),\\
        h' 
        &\in \rW^{1,p}(0,T')^3, \enspace h \in \rL^\infty(0,T')^3, \enspace  \Omega \in \rW^{1,p}(0,T')^3 \tand \pi^\cF \in \rL^p\bigl(0,T';\rH^{1}(\cF(\cdot)) \cap \rL_0^2(\cF(\cdot))\bigr).
    \end{aligned}
\end{equation*}
\end{thm}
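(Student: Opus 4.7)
The plan is to follow the three-step strategy indicated in the introduction: (i) transform the moving-boundary problem to a problem on the fixed reference domain $\cF_0$, (ii) establish maximal $\rL^p$-regularity for the linearization at $t=0$, and (iii) close a fixed point argument on a short time interval. For step (i), I would construct $X(t,\cdot)\colon \cF(t) \to \cF_0$ as in \autoref{sec:change of var}, namely as a cutoff of the rigid motion $y \mapsto \bO(t)^{\top}(y - h(t))$ supported in a neighborhood of $\cS(t)$ and extended by the identity near $\partial \cO$. The hypothesis $\dist(\cS_0,\partial\cO) > r$ and continuity of $t \mapsto (h(t),\bO(t))$ guarantee that $X(t,\cdot)$ is a $\rC^2$-diffeomorphism on some short interval $[0,T']$, with $T'$ chosen so that $\cS(t)$ remains at distance $> r/2$ from $\partial \cO$, so that no collision occurs. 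Pulling \eqref{eq:LC-fluid equationsiso}--\eqref{eq:initial} back to $\cF_0$ yields a quasilinear nonautonomous system for the transformed unknowns, with coefficients depending on $X$, $\nabla X$ and $\nabla^2 X$, and hence on $(h',\Omega)$ via an ODE.

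For step (ii), I would freeze all coefficients at $X(0,\cdot) = \Id$. The resulting linearization is a Stokes block on the fluid part coupled, in the anisotropic ground space $\rL_\sigma^2(\cF_0) \times \rH^1(\cF_0,\bS_0^3)$, to a second-order block on $Q$; in this way the mixed-order nature of the $Q$-tensor model is absorbed into the choice of ground space so that every entry becomes of highest order. The rigid body equations are rewritten by a lifting procedure that eliminates the pressure from the boundary integrals in \eqref{eq:rigid body eqs intro}, and the interface condition $v = \ell + \omega \times y$ is encoded into the domain of a single fluid-structure operator $\mc A$ on $\rX_0 := \rL_\sigma^2(\cF_0) \times \rH^1(\cF_0,\bS_0^3) \times \R^3 \times \R^3$ via the added mass operator. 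The paper's \autoref{sec:lin theory} supplies the maximal $\rL^p$-regularity of $\mc A$, with trace space exactly $\rX_\gamma$ from \eqref{eq:X_gamma}.

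For step (iii), let $\E_{T'} := \rW^{1,p}(0,T';\rX_0) \cap \rL^p(0,T';\mc D(\mc A))$ denote the maximal regularity space, write the transformed system as $z' + \mc A z = F(z)$, $z(0) = z_0$, where $F(z)$ collects all nonlinear terms — convective terms, $S(\nabla u^\cF,Q^\cF)$, $\mdiv(\tau(Q^\cF) + \sigma(Q^\cF))$, $H_\rl(Q^\cF)$, the Newton-law boundary integrals of the nonlinear stress, and the transform-induced perturbations. If $z_\ell \in \E_{T'}$ denotes the linear solution with data $z_0$, define $\Phi(z)$ as the solution of the linearized problem with inhomogeneity $F(z)$, and look for a fixed point on a closed ball $B_R(z_\ell) \subset \E_{T'}$. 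Maximal regularity gives
\begin{equation*}
\|\Phi(z) - \Phi(\tilde z)\|_{\E_{T'}} \le C \|F(z) - F(\tilde z)\|_{\rL^p(0,T';\rX_0)}
\end{equation*}
with $C$ uniform on $T' \le T_0$; together with the analogous self-mapping estimate, this yields a unique fixed point on a sufficiently small interval, and $\pi^\cF$ is recovered from the divergence constraint by an elliptic boundary value problem.

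The main obstacle will be the nonlinear estimates needed in the last step. The divergence of $\tau(Q^\cF)+\sigma(Q^\cF)$ involves third derivatives of $Q^\cF$ multiplied by $Q^\cF$ and $\nabla Q^\cF$, and $H_\rl(Q^\cF)$ is cubic in $Q^\cF$, all of which must be controlled in $\rL^p(0,T';\rL^2)$ against the $Q$-block norm $\rW^{1,p}(0,T';\rH^1) \cap \rL^p(0,T';\rH^3)$. This is precisely where the hypothesis $p > 4$ is used: by mixed-derivative embeddings the $Q$-block embeds into $\rC([0,T'];\rC^1(\overline{\cF_0},\bS_0^3))$ and the fluid block into $\rC([0,T'];\rC(\overline{\cF_0})^3)$, which is enough to dominate the trilinear and quartic terms uniformly on $B_R(z_\ell)$. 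The rigid-body surface integrals additionally require trace inequalities compatible with the $\rH^3$-regularity of $Q^\cF$, while the added mass decomposition ensures that the leading pressure contribution is already inside $\mc A$. Finally, the transform-induced commutator terms carry factors that are norms of $X - \Id$, which tend to zero as $T' \to 0$ because $X(0,\cdot) = \Id$ and $(h',\Omega) \in \rW^{1,p}(0,T')^6$; this supplies the smallness needed for $\Phi$ to be a contraction on $[0,T']$.
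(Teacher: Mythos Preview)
Your outline is correct and matches the paper's proof: transform to $\cF_0$, invoke the maximal $\rL^p$-regularity of the fluid-structure operator from \autoref{sec:lin theory}, subtract a reference solution carrying the initial data, and run a contraction on a short time interval using the $p>4$ embeddings. One detail to sharpen when you fill this in: the paper freezes the quasilinear coupling at $\hat Q = Q_0$ (hence needs the \emph{shifted} maximal regularity of \autoref{thm:lin theory Q-tensor interaction}(a)) rather than at $\hat Q = 0$, since otherwise the Lipschitz constant of the off-diagonal residual $(\tS_\xi(Q)-\tS_\xi(0))\nabla v$ is of order $\|Q_0\|_{\rC^1}$ and does not vanish as $T'\to 0$; and it carries the pressure as part of the fixed-point variable in $\tE_T$, because the transform-induced residuals $(\cG-\nabla)\pi$ and $(\rT-\cT)(v,\pi,Q)$ appearing in $F_1$, $F_3$, $F_4$ depend on $\pi$.
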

A similar result can be obtained with external forces and torques in~\eqref{eq:rigid body eqs intro} provided they lie in $\rL^p(0,T)^3$.
The intersection with $\rL_0^2(\cF(\cdot))$ for the pressure rules out constants and thus leads to uniqueness. 

It readily follows that the zero solution is an equilibrium solution to \eqref{eq:LC-fluid equationsiso}--\eqref{eq:bdry cond}.
In \autoref{prop:energy and equilibria} and \autoref{rem:spat const equilibria}, it has been revealed that the zero solution is the only spatially constant equilibrium.
The second main result below states the existence of a unique, \emph{global-in-time}, strong solution for small data as well as the exponential convergence of this solution to the trivial equilibrium.

\begin{thm}\label{thm:global strong wp for small data}
Let $p > 4$, let $\cO \subset \R^3$ be a bounded $\rC^3$-domain, consider the initial domains of the rigid body and the fluid $\cS_0$ and $\cF_0$, and assume $\dist(\cS_0,\del \cO) > r$, for $r > 0$.
Then there is $\eta_0 > 0$ so that for all $\eta \in (0,\eta_0)$, there are constants $\delta_0 > 0$ and $C > 0$ such that for all $z_0 = (v_0,Q_0,\ell_0,\omega_0) \in \overline{B}_{\rX_\gamma}(z_0,\delta)$, with $\delta \in (0,\delta_0)$, there exists a unique solution $(u^\cF,Q^\cF,h',\Omega,\pi^\cF)$ to \eqref{eq:LC-fluid equationsiso}--\eqref{eq:initial} satisfying 
\begin{equation*}
    \begin{aligned}
        &\| \mre^{\eta(\cdot)} u^\cF \|_{\rL^p(0,\infty;\rH^2(\cF(\cdot)))} + \| \mre^{\eta(\cdot)} \del_t u^\cF \|_{\rL^p(0,\infty;\rL^2(\cF(\cdot)))} + \| \mre^{\eta(\cdot)} u^\cF \|_{\rL^\infty(0,\infty;\rB_{2p}^{2-\nicefrac{2}{p}}(\cF(\cdot)))}\\
        & +\| \mre^{\eta(\cdot)} Q^\cF \|_{\rL^p(0,\infty;\rH^3(\cF(\cdot)))} + \| \mre^{\eta(\cdot)} \del_t Q^\cF \|_{\rL^p(0,\infty;\rH^1(\cF(\cdot)))} + \| \mre^{\eta(\cdot)} Q^\cF \|_{\rL^\infty(0,\infty;\rB_{2p}^{3-\nicefrac{2}{p}}(\cF(\cdot)))}\\
        & + \| \mre^{\eta(\cdot)} h' \|_{\rW^{1,p}(0,\infty)} + \| h \|_{\rL^\infty(0,\infty)} + \| \mre^{\eta(\cdot)} \Omega \|_{\rW^{1,p}(0,\infty)} + \| \mre^{\eta(\cdot)} \pi^\cF \|_{\rL^p(0,\infty;\rH^1(\cF(\cdot)) \cap \rL_0^2(\cF(\cdot)))} \le C \delta.
    \end{aligned}
\end{equation*}
Furthermore, $\dist(\cS(t),\del \cO) > \nicefrac{r}{2}$ for all $t \in [0,\infty)$, so no collision with the fluid boundary occurs.
\end{thm}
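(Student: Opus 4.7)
The plan is to transfer the problem to the fixed initial fluid domain $\cF_0$ via the local change of variables from \autoref{sec:change of var}, rewrite the transformed system as the linearization at the trivial equilibrium $(0,0,0,0)$ plus nonlinear remainders, and close a Banach fixed point argument in a function space equipped with exponential time weights. The smallness of the data keeps all perturbative terms small and, in particular, controls the rigid-body displacement uniformly in time, which is what rules out collisions with $\del \cO$.

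\textbf{Weighted linear theory.} Introduce the weighted solution space $\cE_\eta$ consisting of tuples $(u,Q,\ell,\omega,\pi)$ in the regularity class of \autoref{thm:loc strong wp} on $\cF_0$ over $(0,\infty)$, but with each temporal $\rL^p$- and $\rW^{1,p}$-norm multiplied by $\mre^{\eta(\cdot)}$, matching the norms in the statement of \autoref{thm:global strong wp for small data}, and let $\cD_\eta$ be the corresponding weighted right-hand-side space. By \autoref{sec:lin theory}, the fluid-structure operator $A$ associated to the linearization at zero is sectorial on the anisotropic ground space $\rL_\sigma^2(\cF_0) \times \rH^1(\cF_0,\bS_0^3) \times \R^3 \times \R^3$, and the refined spectral analysis announced in the introduction places its spectral bound strictly below some $-\eta_0 < 0$. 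Since $z \mapsto \mre^{\eta(\cdot)} z$ converts maximal $\rL^p(\R_+)$-regularity for $A$ into maximal $\rL^p(\R_+)$-regularity for $A - \eta$, every $\eta \in (0,\eta_0)$ yields maximal regularity for the linear problem in the weighted pair $(\cE_\eta,\cD_\eta)$.

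\textbf{Nonlinear fixed point.} After transporting the equations to $\cF_0$, the system takes the operator form $\del_t z + A z = \cN(z)$ with $z(0) = z_0$, where $\cN$ gathers the convective terms, the contributions from $\tau(Q^\cF)$, $\sigma(Q^\cF)$ and $S(\nabla u^\cF,Q^\cF)$, the lower-order polynomial parts of $H(Q^\cF)$, the transform-induced coefficient perturbations of all differential operators, and the pressure-free boundary contributions produced by the lifting entering the definition of the fluid-structure operator. Each summand of $\cN$ is at least bilinear in $z$; hence by the Sobolev embeddings available for $p > 4$ and product/composition estimates along the lines of \autoref{sec:proof local}, one obtains
\begin{equation*}
    \|\cN(z_1) - \cN(z_2)\|_{\cD_\eta} \le C\bigl(\|z_1\|_{\cE_\eta} + \|z_2\|_{\cE_\eta}\bigr) \|z_1 - z_2\|_{\cE_\eta}.
\end{equation*}
The exponential weight passes cleanly through each product because one factor can be kept in an unweighted $\rL^\infty$-in-time trace norm while only the other factor carries $\mre^{\eta(\cdot)}$. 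Together with the fact that the linear lift of $z_0$ has $\cE_\eta$-norm bounded by $C\|z_0\|_{\rX_\gamma} \le C\delta$, this provides a contraction on the ball $\overline{B}_{\cE_\eta}(0,2C\delta)$ for $\delta$ sufficiently small, producing the unique global solution of the asserted norm.

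\textbf{Collision avoidance and main obstacle.} For the no-collision statement, H\"older's inequality in time gives
\begin{equation*}
    \int_0^\infty |h'(s)| \rd s \le \bigl\|\mre^{\eta s} h'\bigr\|_{\rL^p(0,\infty)} \, \bigl\|\mre^{-\eta s}\bigr\|_{\rL^{p'}(0,\infty)} \le C'\delta,
\end{equation*}
so $|h(t)| \le C'\delta$ uniformly; choosing $\delta_0$ with $C'\delta_0 \le r/2$ forces $\dist(\cS(t),\del \cO) > r/2$ for all $t \ge 0$, so that the local transform remains a $\rC^2$-diffeomorphism on $[0,\infty)$ and the fixed-point construction is internally consistent. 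The main obstacle is the nonlinear estimate above: the mixed-order coupling of $u^\cF$ and $Q^\cF$ through products such as $Q^\cF \Delta Q^\cF$ and $\nabla Q^\cF \odot \nabla Q^\cF$ in $\tau$, $\sigma$ and $S(\nabla u^\cF,Q^\cF)$ has to be controlled on the anisotropic ground space $\rL^2 \times \rH^1$ while simultaneously accommodating the exponential weight, and the Jacobian-induced coefficients of the transformed differential operators depend on $(h,\Omega)$, so that their $\cE_\eta$-compatibility is itself coupled to the decay of the rigid-body velocities. Tightening these estimates into a genuine contraction in $\cE_\eta$ is the technical heart of the proof.
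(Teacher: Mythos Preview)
Your approach is the paper's: transform to $\cF_0$, linearize at the trivial equilibrium, invoke the maximal $\rL^p(\R_+)$-regularity with exponential decay from \autoref{thm:lin theory Q-tensor interaction}(b), and close a contraction in the exponentially weighted ball $\tcK_\eps$ of \eqref{eq:def of tcK}; the nonlinear estimates you sketch correspond to \autoref{prop:ests G_1}--\autoref{prop:ests G_3 and G_4}.

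There is one gap in the collision argument. You bound only $|h(t)|$, but $\cS(t) = h(t) + \bO(t)\cS_0$, so when $\cS_0$ is not a ball the rotation $\bO(t)$ also moves $\cS(t)$ relative to $\del\cO$. The paper uses $\dist(\cS(t),\cS_0) \le |h(t)| + |\bO(t) - \Id_3|\cdot\sup_{y\in\cS_0}|y|$ and then $\|\bO - \Id_3\|_\infty \le C\eps$ from \autoref{lem:aux ests trafo global}(e), which follows from the smallness of $\mre^{\eta(\cdot)}\omega$ by an integration argument parallel to your $h'$-bound. Without controlling the rotation your implication $|h(t)|\le C'\delta \Rightarrow \dist(\cS(t),\del\cO) > r/2$ does not hold in general.
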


\section{Transformation to a fixed domain}
\label{sec:change of var}

This section is devoted to the change of variables to the fixed domain.
In the sequel, we present a classical transformation which was e.\ g.\ used in \cite{IW:77}.
Denoting by $m(t)$ the skew-symmetric matrix satisfying $m(t) x = \Omega(t) \times x$, we investigate
\begin{equation}\label{eq:IVP X0}
\left\{
	\begin{aligned}
		\partial_t X_0(t,y) &= m(t)(X_0(t,y) - h(t)) + h'(t), &&\ton (0,T) \times \R^3, \\
		X_0(0,y) &= y, &&\tfor y \in \R^3. 
	\end{aligned}
\right. 	
\end{equation}
The solution to \eqref{eq:IVP X0} is of the form $X_0(t,y) = \bO(t) y + h(t)$ for $\bO(t) \in \SO(3)$, where $\bO \in \rW^{2,p}(0,T)^{3 \times 3}$ holds true if $h'$, $\Omega \in \rW^{1,p}(0,T)^3$.
At the same time, the inverse $Y_0(t,x) = \bO^{\top}(t)(x - h(t))$ of $X_0(t)$ solves $\partial_t Y_0(t,x) = -M(t) Y_0(t,x) - \ell(t)$ on $(0,T) \times \R^3$, where $M(t) \coloneqq \bO^{\top}(t) m(t) \bO(t)$ and $\ell(t) \coloneqq \bO^{\top}(t) h'(t)$, as well as $Y_0(0,x) = x$.

So far, the transformation acts globally in space, so we modify $X_0$ and $Y_0$ such that it only operates in a suitable open neighborhood of the moving body.
For this purpose, we introduce a new diffeomorphism~$X$, defined implicitly to be the solution to
\begin{equation}\label{eq:IVP X}
\left\{
	\begin{aligned}
		\partial_t X(t,y) &= b(t,X(t,y)), &&\ton (0,T) \times \R^3, \\
		X(0,y) &= y, &&\tfor y \in \R^3.
	\end{aligned}
\right. 	
\end{equation}
The right-hand side $b$ will force the transform to fulfill the above requirements.
Let us recall the assumption $\dist(\cS_0,\partial \cO) > r$ for some $r > 0$.
By the continuity of the body velocity, we take into account the solution up to a time for which a small distance such as $\nicefrac{r}{2}$ of the rigid body and the outer boundary is guaranteed.
Thus, we consider $\chi \in \rC^{\infty}(\R^3;[0,1])$ such that $\chi(x) = 1$ if $\dist(x,\partial \cO) \ge r$ and $\chi(x) = 0$ if $\dist(x,\partial \cO) \le \nicefrac{r}{2}$.
With this cut-off, we set the right-hand side $b \colon [0,T] \times \R^3 \to \R^3$ of \eqref{eq:IVP X} to be
\begin{equation*}
    b(t,x) \coloneqq \chi(x)[m(t)(x - h(t)) + h'(t)] - B_{\cF_0}(\nabla \chi(\cdot)[m(t)(\cdot - h(t)) + h'(t)])(x),
\end{equation*}
with $B_{\cF_0} \colon \rC_c^\infty(\cF_0) \to \rC_c^\infty(\cF_0)^3$ being the Bogovski\u{\i} operator associated to $\cF_0$.
Similarly as in \cite[Section~4]{BBHR:23}, we argue that $b \in \rW^{1,p}\bigl(0,T;\rC_{c,\sigma}(\cF_0)^3\bigr)$, where the subscript $_\sigma$ indicates that $\mdiv(b(t,\cdot)) = 0$, and $b|_{\del \cS_0} = m(x-h) + h'$.

Besides, the Picard-Lindel\"of theorem yields the existence of a unique solution $X \in \rC^1\bigl((0,T);\rC^\infty(\R^3)^3\bigr)$ to \eqref{eq:IVP X} for given $h'$, $\Omega \in \rW^{1,p}(0,T)^3$.
This solution has continuous mixed partial derivatives.
From the uniqueness of the solution, we deduce the bijectivity of the function $X(t,\cdot)$, and its inverse is denoted by $Y(t,\cdot)$.
Thanks to $\mdiv b = 0$, Liouville's theorem yields that $X$ and $Y$ are both volume-preserving, $\rJ_X(t,y) \rJ_Y(t,X(t,y)) = \Id_3$ and $\det \rJ_X(t,y) = \det \rJ_Y(t,x) = 1$, with $\rJ_X$ and $\rJ_Y$ representing the Jacobian matrices of $X$ and $Y$ as usual.
The inverse $Y$ of $X$ solves $\partial_t Y(t,x) = b^{(Y)}(t,Y(t,x))$ on~$(0,T) \times \R^3$, where the right-hand side is given by $b^{(Y)}(t,y) \coloneqq -\rJ_X^{-1}(t,y) b(t,X(t,y))$, and $Y(0,x) = x$.
Let us stress that~$b^{(Y)}$ and $Y$ possess the same space and time regularity as $b$ and $X$.
Moreover, the interval of existence of the solution is restricted by the condition $\dist(\cS(t),\partial \cO) > \nicefrac{r}{2}$ for all $t \in (0,T)$.
In fact, we can write
$\dist(\cS(t), \cS_0) \leq \|h(t)\|_{\mathbb{R}^3} + \|\mathbb{O}(t)-\mathbb{I}\|_{\mathbb{R}^{3\times 3}}|y|.$
As the rigid body moves with a continuous velocity, thanks to the estimates in \autoref{sec:proof local}, it is possible to find a sufficiently small time $T > 0$ such that $\dist(\cS(t), \cS_0) \leq \frac{r}{2}$ is satisfied for large initial data.
For small initial data, we will see that the exponential stability of the semigroup associated with the linearized problem at zero as revealed in \autoref{sec:lin theory} ensures that $\dist(\cS(t), \cS_0) \leq \frac{r}{2}$. Combining with the assumption $\dist(\cS_0,\del \cO) > r$, we obtain
$\dist(\cS(t),\partial \cO) > \nicefrac{r}{2}$ for all $t \in (0,\infty)$, so no collision occurs in finite time.

For $X \colon\R^3\rightarrow\R^3$ solving the Cauchy problem \eqref{eq:IVP X} and $(t,y)\in (0,T) \times \R^3$, we define the transformed variables and stress tensor by
\begin{equation*}
    \begin{aligned}
        v(t,y) 
        &\coloneqq \rJ_{Y}(t,X(t,y))u^{\cF}(t,X(t,y)), \enspace Q(t,y)\coloneqq Q^{\cF}(t,X(t,y)), \enspace \ell(t) \coloneqq \bO(t)^{\top}h'(t), \enspace \omega(t) \coloneqq \bO(t)^{\top}\Omega(t),\\
        \pi(t,y)
        &\coloneqq\pi^{\cF}(t,X(t,y))\tand \Sigma(v(t,y),\pi(t,y), Q(t,y)) \coloneqq \bO(t)^{\top}\Tilde{\Sigma}(\bO(t)v(t,y),\pi(t,y),Q(t,y))\bO(t).
    \end{aligned}
\end{equation*}

The metric contravariant $(g^{ij})$ and covariant tensors $(g_{ij})$ and the Christoffel symbol are given by
\begin{equation}\label{eq:contrvar covar and Christoffel}
    g^{ij}= \sum_{k=1}^3 (\partial_k Y_i)(\partial_k Y_j), \enspace g_{ij}= \sum_{k=1}^3 (\partial_i X_k)(\partial_j X_k) \tand \Gamma^{i}_{jk}= \frac{1}{2}\sum_{l=1}^3 g^{il} (\partial_k g_{jl} + \partial_j g_{kl} - \partial_l g_{jk}).
\end{equation}
From \cite[(3.13)]{GGH:13} and \cite[(3.13)]{BBH:23}, we recall the transformed Laplacians
\begin{equation}\label{eq:transformed Laplacians}
    \begin{aligned}
        (\cL_1 v)_{i} 
        &= \sum_{j,k=1}^{3} \partial_j (g^{jk}\partial_k v_i) + 2 \sum_{j,k,l=1}^{3}  g^{kl}\Gamma_{jk}^i(\partial_l v_j) +  \sum_{j,k,l=1}^{3}\Bigl(\partial_k(g^{kl}\Gamma^{i}_{kl}) + \sum_{m=1}^3 g^{kl}\Gamma^{m}_{jl}\Gamma^{i}_{km}\Bigr)v_j \tand\\
        (\cL_2 Q)_{ij} 
        &= \sum\limits_{k,l=1}^3 \left((\partial_{l}\partial_{k}Q_{ij})g^{kl}\right) + \sum\limits_{k=1}^3 (\Delta Y_k) (\partial_k Q_{ij}).
    \end{aligned}
\end{equation}
Now, we compute the transformed time derivative, convective term and gradient of the pressure
\begin{equation*}
    (\cM v)_{i} = \sum\limits_{j=1}^3 (\partial_t Y_j) (\partial_j v_i) + \sum\limits_{j,k=1}^3 \bigl(\Gamma^i_{jk}(\partial_t Y_k) + (\partial_k Y_i)(\partial_j \partial_t X_k)\bigr)v_j, \enspace
    (\cN(v))_{i} = \sum\limits_{j=1}^3 v_j(\partial_j v_i) + \sum\limits_{j,k=1}^3 \Gamma^i_{jk} v_j v_k
\end{equation*}
and $(\cG \pi)_{i} = \sum\limits_{j=1}^3 g^{ij}(\partial_j \pi)$.

Before addressing the transformed terms associated to $\mdiv \sigma$ and $\mdiv \tau$, we settle some notation, where we omit the superscript $^\cF$ for simplicity.
In that respect, we set $B_{\sigma}(Q)A$ so that it corresponds to~$\mdiv \sigma$, i.\ e., $B_{\sigma}(Q)Q = \mdiv(Q \Delta Q - \Delta Q Q)$.
Thus, we define $B_{\sigma}(Q)A \coloneqq B_{\sigma,1}(Q)A + B_{\sigma,2}(Q)A$ by
\begin{equation}\label{eq:B_sigma}
    [B_{\sigma}(Q)A]_i \coloneqq \sum_{j,k=1}^3 \Bigl(\bigl((\del_j Q_{ik}) (\Delta A_{kj}) + Q_{ik} (\del_j \Delta A_{kj})\bigr) - \bigl((\del_j \Delta A_{ik}) Q_{kj} + (\Delta A_{ik}) (\del_j Q_{kj})\bigr)\Bigr).
\end{equation}
The next term to be addressed is $\tau_\rh$ from \eqref{eq:notation for Q-tensor}.
We further split it into $\tau_\rh \coloneqq \xi(\tau_{\rh,1} + \tau_{\rh,2} + \tau_{\rh,3})$, where~$\tau_{\rh,1} \coloneqq -\nicefrac{2}{3}H_\rh$, $\tau_{\rh,2} \coloneqq -\{Q,H_\rh\}$ and $\tau_{\rh,3} \coloneqq 2(Q + \nicefrac{\Id_3}{3})\tr(Q H_\rh)$.
We also introduce $B_{\tau_{\rh,i}}(Q)A$ such that $B_{\tau_{\rh,i}}(Q)Q = \mdiv \tau_{\rh,i}$ for $i=1,2,3$.
The first term is $[B_{\tau_{\rh,1}}A]_i \coloneqq -\nicefrac{2}{3}\sum_{j=1}^3 \del_j (\Delta A_{ij} - A_{ij})$.
Observing that $\{Q,H_\rh\} = Q (\Delta Q) + (\Delta Q) Q - 2 Q^2$, we define
\begin{equation}\label{eq:B_tau_rh_2}
    [B_{\tau_{\rh,2}}(Q)A]_i 
    \coloneqq -(B_{\sigma,1}(Q)A)_i + (B_{\sigma,2}(Q)A)_i + 2 \sum_{j,k=1}^3 \bigl((\del_j A_{ik})Q_{kj} + Q_{ik}(\del_j A_{kj})\bigr).
\end{equation}
In the same way, calculating the term $\tr(Q H_\rh) = \sum_{k,l=1}^3 (Q_{kl} \Delta Q_{lk} - Q_{kl} Q_{lk})$, we introduce 
\begin{equation*}
    \begin{aligned}
        [B_{\tau_{\rh,3}}(Q)A]_i 
        &\coloneqq 2\sum_{j,k,l=1}^3 \Bigl((\del_j Q_{ij}) (Q_{kl} \Delta A_{lk} - Q_{kl} A_{lk}) + (Q_{ij} + \nicefrac{\delta_{ij}}{3})\bigl((\del_j Q_{kl}) \Delta A_{lk} + Q_{kl} (\del_j \Delta A_{lk})\\
        &\quad - (\del_j Q_{kl}) A_{lk} - Q_{kl} (\del_j A_{lk})\bigr)\Bigr).
    \end{aligned}
\end{equation*}
We also write $\tau_\rl = \xi \tau_{\rl,1} + \tau_{\rl,2} + \xi \tau_{\rl,3}$, with $\tau_{\rl,1} \coloneqq 2 (Q + \nicefrac{\Id_3}{3}) (\tr(Q^3) - \tr(Q^2)^2)$, $\tau_{\rl,2} \coloneqq - \nabla Q \odot \nabla Q$ and~$\tau_{\rl,3} \coloneqq 2 (Q + \nicefrac{\Id_3}{3})H_\rl$.
Again, we define $B_{\tau_{\rl,i}}(Q)A$ so that $B_{\tau_{\rl,i}}(Q)Q = \mdiv \tau_{\rl,i}$.
The first term reads as
\begin{equation*}
    \begin{aligned}
        [B_{\tau_{\rl,1}}(Q)A]_i
        &\coloneqq 2 \sum_{j,k,l=1}^3 \Biggl((\del_j A_{ij}) \biggl(\sum_{m=1}^3\Bigl( Q_{kl} Q_{lm} Q_{mk} - \sum_{n=1}^3 Q_{km} Q_{mk} Q_{ln} Q_{nl}\Bigr)\biggr) + (Q_{ij} + \nicefrac{\delta_{ij}}{3})\\
        &\quad \cdot \biggl(\sum_{m=1}^3\Bigl( (\del_j A_{kl})Q_{lm}Q_{mk}+ Q_{kl}(\del_j A_{lm})Q_{mk} + Q_{kl}Q_{lm}(\del_j A_{mk}) - \sum_{n=1}^3 \bigl((\del_j A_{km}) Q_{mk} Q_{ln} Q_{nl}\\
        &\qquad + Q_{km} (\del_j A_{mk}) Q_{ln} Q_{nl} + Q_{km} Q_{mk} (\del_j A_{ln}) Q_{nl} + Q_{km} Q_{mk} Q_{ln} (\del_j A_{nl})\bigr)\Bigr)\biggr)\Biggr).
    \end{aligned}
\end{equation*}
To capture $-\nabla Q \odot \nabla Q$, we set $[B_{\tau_{\rl,2}}(Q)A]_i \coloneqq - \sum_{j,k=1}^2 \bigl((\del_i Q_{jk}) \Delta A_{kj} + \sum_{l=1}^3 (\del_l Q_{kj}) (\del_l \del_i A_{jk})\bigr)$.
The term $B_{\tau_{\rl,3}}(Q)A$ associated to $\tau_{\rl,3}$ is of a similar form as $B_{\tau_{\rl,1}}(Q)A$.

Next, we calculate the respective transformed terms.
The transformed version of $B_\sigma$ from \eqref{eq:B_sigma} is
\begin{equation}\label{eq:cB_sigma}
     \cB_{\sigma}(Q)A \coloneqq \cB_{\sigma,1}(Q)A + \cB_{\sigma,2}(Q)A \coloneqq \cB_{\sigma,11}(Q)A + \cB_{\sigma,12}(Q)A + \cB_{\sigma,2}(Q)A, \twhere
\end{equation}
\begin{equation*}
    \begin{aligned}
        (\cB_{\sigma,11}(Q)A)_i
        &\coloneqq \sum_{j,k=1}^3 (\cL_2 A)_{kj} \sum_{l=1}^3 (\del_l Q_{ik})(\del_j Y_l), \enspace (\cB_{\sigma,12}(Q)A)_i \coloneqq \sum_{j,k=1}^3 Q_{ik} \Biggl(\sum_{l=1}^3 \biggl(\sum_{m=1}^3\Bigl(\sum_{n=1}^3 (\del_n \del_m \del_l A_{kj})\\
        &\quad \cdot (\del_j Y_n) g^{lm}\Bigr) + (\del_m \del_l A_{kj}) (\del_j g^{lm}) + (\del_m \del_l A_{kj})(\del_j Y_m) \Delta Y_l\biggr) + (\del_l A_{kj})(\del_j \Delta Y_l)\Biggr).
    \end{aligned}
\end{equation*}
By \eqref{eq:B_sigma}, the term $\cB_{\sigma,2}(Q)A$ takes a similar shape.
For the transformed versions of $B_{\tau_{\rh,i}}$, we first obtain
\begin{equation}\label{eq:cB_tau_rh_1}
    \begin{aligned}
        (\cB_{\tau_{\rh,1}}A)_i
        &= -\frac{2}{3}\sum_{j,k=1}^3 \biggl(\sum_{l=1}^3\Bigl(\sum_{m=1}^3 \bigl((\del_m \del_l \del_k A_{ij}) (\del_j Y_m) g^{kl}\bigr) + (\del_l \del_k A_{ij}) (\del_j g^{kl})\\
        &\qquad + (\del_l \del_k A_{ij}) (\del_j Y_l) \Delta Y_k\Bigr) + (\del_k A_{ij})(\del_j \Delta Y_k) + (\del_j Y_k)(\del_k A_{ij})\biggr).
    \end{aligned}
\end{equation}
From \eqref{eq:B_tau_rh_2}, it follows that
\begin{equation*}
    (\cB_{\tau_{\rh,2}}(Q)A)_i = -(\cB_{\sigma,1}(Q)A)_i + (\cB_{\sigma,2}(Q)A)_i + 2\sum_{j,k,l=1}^3 \bigl((\del_j Y_l) (\del_l A_{ik})Q_{kj} + Q_{ik}(\del_j Y_l)(\del_l A_{kj})\bigr).
\end{equation*}
The $i$-th component of the last term $\cB_{\tau_{\rh,3}}(Q)A$ in this context is given by
\begin{equation*}
    \begin{aligned}
        (\cB_{\tau_{\rh,3}}(Q)A)_i
        &= 2\sum_{j,k,l,m=1}^3 \biggl((\del_j Y_m) (\del_m Q_{ij})(Q_{kl}(\cL_2A)_{lk} - Q_{kl} A_{lk}) + (Q_{ij} + \nicefrac{\delta_{ij}}{3})\Bigl((\del_j Y_m)(\del_m Q_{kl})(\cL_2 A)_{lk}\\
        &\quad + (\cB_{\sigma,12}(Q)A)_j - (\del_j Y_m)(\del_m Q_{kl})A_{lk} - Q_{kl} (\del_j Y_m) (\del_m A_{lk})\Bigr)\biggr).
    \end{aligned}
\end{equation*}

Similarly, we obtain the transformed terms corresponding to $B_{\tau_{\rl}}$.
For brevity, we omit the details.
Let us summarize the transformed terms given above in a more compact form as
\begin{equation*}
    \begin{aligned}
        \cB_{\tau_\rh}(Q)A 
        &\coloneqq \xi(\cB_{\tau_{\rh,1}}A + \cB_{\tau_{\rh,2}}(Q)A + \cB_{\tau_{\rh,3}}(Q)A), \enspace \cB_{\tau_\rl}(Q)A \coloneqq \xi \cB_{\tau_{\rl,1}}(Q)A + \cB_{\tau_{\rl,2}}(Q)A + \xi \cB_{\tau_{\rl,3}}(Q)A,\\
        \cB_{\tau}(Q)A 
        &\coloneqq \cB_{\tau_\rh}(Q)A + \cB_{\tau_\rl}(Q)A \tand \cB_{\tau}(Q) \coloneqq \cB_{\tau}(Q)Q \taswellas \cB_{\sigma}(Q) \coloneqq \cB_{\sigma}(Q)Q.
    \end{aligned}
\end{equation*}

In the equation for $Q$, we first observe that $\del_t Q^{\cF} = \del_t Q + (\del_t Y \cdot \nabla) Q$.
For the convective term, we obtain $(u^{\cF} \cdot \nabla)Q^{\cF} = \sum_{k=1}^3 \bigl(\sum_{l=1}^3 (\del_l X_k) v_l\bigr) \bigl(\sum_{m=1}^3 (\del_m Q_{ij}) (\del_k Y_m)\bigr) = \sum_{l,m=1}^3 \delta_{lm} v_l (\del_m Q_{ij}) = (v \cdot \nabla) Q$.

Concerning the transformation of the gradient of $u^{\cF}$, we calculate
\begin{equation}\label{eq:transformed grad}
    \frac{\del u_i^{\cF}}{\del x_j} = \sum_{k=1}^3 \Bigl(\sum_{l=1}^3 (\del_l \del_k X_i) (\del_j Y_l) v_k + (\del_k X_i) \sum_{l=1}^3 (\del_l v_k) (\del_j Y_l)\Bigr) \eqqcolon \cD(v)_{ij} \eqqcolon \cD_{ij}.
\end{equation}
Consequently, $S(\nabla u^{\cF},Q^{\cF})$ from \eqref{eq:notation for Q-tensor} transforms as
\begin{equation}\label{eq:cS}
    \begin{aligned}
        (\cS(v,Q))_{ij} 
        &= -\frac{1}{2}\sum_{k=1}^3 \bigl(Q_{ik}(\cD_{kj} - \cD_{jk}) - (\cD_{ik} - \cD_{ki})Q_{kj}\bigr) + \xi\biggl(\frac{1}{3} (\cD_{ij} + \cD_{ji})\\
        &\qquad+ \frac{1}{2} \sum_{k=1}^3 Q_{ik} (\cD_{kj} + \cD_{jk}) + (\cD_{ik} + \cD_{ki})Q_{kj} - 2 \Bigl(\sum_{k,l=1}^3 Q_{kl} \cD_{lk}\Bigr) \Bigl[Q_{ij} + \frac{1}{3}\delta_{ij}\Bigr]\biggr).
    \end{aligned}
\end{equation}
The term $H(Q)$ from \eqref{eq:H Q-tensor} transforms to $\cH(Q) = \cL_2 Q - Q + (Q^2 - \tr(Q^2) \nicefrac{\Id_3}{3}) - \tr(Q^2) Q$.
Then for the unit outward normal $n = \bO^\top(t) \nu(t)$ to $\del \cF_0$, which is thus directed inwards to $\del \cS_0$, the complete transformed system associated to \eqref{eq:LC-fluid equationsiso}--\eqref{eq:initial} on the fixed domain $(0,T)\times\cF_0$ takes the shape
\begin{equation}\label{eq:cv1}
\left\{
    \begin{aligned}
        \partial_t v + (\cM-\cL_1)v +\cN(v)+\cG \pi
        &=\cB_\tau(Q) + \cB_\sigma(Q), \enspace \mdiv v = 0, &&\tin (0,T) \times \cF_0,\\
        {\partial_t Q} + (\partial_t Y \cdot \nabla) Q + (v \cdot \nabla)Q - \cS(v,Q)  
        &=  \cH(Q), &&\tin (0,T) \times \cF_0,\\
        \mS\ell' + \mS(\omega\times\ell)
        &= -\int\limits_{\partial \cS_0} \Sigma(v,\pi,Q) n\rd \Gamma, &&\tin (0,T),\\
        J_0\omega' - \omega\times (J_0\omega) 
        &=-\int\limits_{\partial \cS_0} y \times  \Sigma(v,\pi,Q) n\rd \Gamma, &&\tin (0,T).
    \end{aligned}
\right.
\end{equation}
The boundary and initial conditions are
\begin{equation}\label{eq:cv2}
    v = 0, \enspace \partial_n Q = 0 \ton (0,T) \times \partial \cO, \enspace v = \ell+\omega\times y, \enspace \partial_n Q = 0 \ton (0,T) \times \partial\cS_0 \tand
\end{equation}
\begin{equation}\label{eq:cv3}
    v(0)=v_0, \enspace Q(0) = Q_0, \enspace \ell(0) = \ell_0 \tand \omega(0) = \omega_0.
\end{equation}

Thanks to the above coordinate transform, we may rewrite the two main results, \autoref{thm:loc strong wp} and \autoref{thm:global strong wp for small data}, on the fixed domain $(0,T) \times \cF_0$, which we also refer to as the \emph{reference configuration}.

\begin{thm}\label{thm:loc strong wp in ref config}
Consider $p > 4$, a bounded $\rC^3$-domain $\cO \subset \R^3$, the initial rigid body and fluid domains $\cS_0$ and $\cF_0$ and $z_0 = (v_0,Q_0,\ell_0,\omega_0) \in \rX_\gamma$, with $\rX_\gamma$ as in \eqref{eq:X_gamma}.
If there is $r > 0$ with $\dist(\cS_0,\del \cO) > r$, then there exist $T' \in (0,T]$ and $X \in \rC^1([0,T'];\rC^2(\R^3)^3)$ so that $X(\tau,\cdot) \colon \cF(\tau) \to \cF_0$ are $\rC^2$-diffeomorphisms for all $\tau \in [0,T']$, and the interaction problem \eqref{eq:cv1}--\eqref{eq:cv3} has a unique solution
\begin{equation*}
    \begin{aligned}
        v
        &\in \rW^{1,p}\bigl(0,T';\rL^2(\cF_0)^3\bigr) \cap \rL^p\bigl(0,T';\rH^2(\cF_0)^3\bigr), \enspace Q
        \in \rW^{1,p}\bigl(0,T';\rH^1(\cF_0,\bS_{0}^3)\bigr) \cap \rL^p\bigl(0,T';\rH^3(\cF_0,\bS_{0}^3)\bigr),\\
        \ell&\in \rW^{1,p}(0,T')^3, \enspace \omega \in \rW^{1,p}(0,T')^3 \tand \pi \in \rL^p\bigl(0,T';\rH^1(\cF_0) \cap \rL_0^2(\cF_0)\bigr).
    \end{aligned}
\end{equation*}
\end{thm}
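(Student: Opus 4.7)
The plan is to prove \autoref{thm:loc strong wp in ref config} by a Banach fixed-point argument in the maximal regularity class, with the linearization handled by the theory developed in \autoref{sec:lin theory} and all remaining terms treated as perturbations. Concretely, I would introduce the solution space $\mathbb{E}(T')$ consisting of tuples $(v,Q,\ell,\omega,\pi)$ with the regularity stated in the theorem, and linearize \eqref{eq:cv1}--\eqref{eq:cv3} by keeping on the left-hand side the principal parts $\partial_t v - \Delta v + \nabla\pi$, $\mdiv v=0$, $\partial_t Q - \Delta Q$, together with $\mathcal{S}_\xi$ in the $Q$-equation, and the linearized Newton laws coupled monolithically through the added-mass formulation. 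All other contributions of $\cM$, $\cN$, $\cG-\nabla$, $\cL_1-\Delta$, $\cL_2-\Delta$, $(\partial_tY\cdot\nabla)Q$, $\cB_\tau(Q)$, $\cB_\sigma(Q)$, $\cH(Q)+Q-\Delta Q$ and the nonlinear coupling $\mS(\omega\times\ell)$, $\omega\times(J_0\omega)$, as well as the deviation of the stress tensor $\Sigma$ from its constant-coefficient version at $t=0$, are lumped into a right-hand side functional $F(v,Q,\ell,\omega,\pi)$. Maximal $L^p$-regularity of the linear problem, proven later in \autoref{sec:lin theory}, then provides a bounded solution operator assigning the unique element of $\mathbb{E}(T')$ to each admissible data pair of $F$ and the initial value $z_0 \in \rX_\gamma$.

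The fixed-point map $\Phi\colon \mathbb{E}(T')\to\mathbb{E}(T')$ is then defined by setting $\Phi(\bar z)$ to be the linear solution with right-hand side $F(\bar z)$. To run the contraction argument I would fix a reference extension $z^\star\in\mathbb{E}(T)$ of the initial data (for instance the solution of the pure linear Cauchy problem with $F\equiv 0$) and work on the closed ball $B_R(z^\star)$ in $\mathbb{E}(T')$ for suitable $R>0$. The coefficients $g^{ij}, \Gamma^i_{jk}, \partial_t Y$ depend through \eqref{eq:IVP X} on $(\ell,\omega)\in W^{1,p}(0,T')\hookrightarrow C([0,T'])$, so Lipschitz continuity of the ODE flow together with smoothness of the Bogovski\u\i{} lift of the cut-off shows that these coefficients differ from their initial values by a quantity that is $O(T'^{\alpha})$ in the relevant norms, which absorbs the off-diagonal parts of $\cL_1-\Delta$, $\cL_2-\Delta$, $\cG-\nabla$, $\cM$ and the discrepancy between $\Sigma$ and its leading-order symbol. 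Self-mapping and contraction of $\Phi$ will follow once $T'>0$ is small enough.

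The main obstacle is the derivation of the nonlinear estimates for $F$ in the anisotropic ground space $L^p(0,T';L^2(\cF_0))$ for the momentum equation and $L^p(0,T';H^1(\cF_0))$ for the $Q$-equation, since the stress tensor contains the high-order, quasilinear combinations $Q\Delta Q$, $\nabla Q\odot\nabla Q$, $(Q+\Id_3/3)\tr(QH(Q))$, which need to be multiplied by further factors of $\nabla Q$ and $Q$ without losing regularity. Here the hypothesis $p>4$ is decisive: it yields $B^{3-2/p}_{2p}(\cF_0)\hookrightarrow C^{2,\alpha}(\cF_0)$ and $B^{2-2/p}_{2p}(\cF_0)\hookrightarrow C^{1,\alpha}(\cF_0)$ via the mixed-derivative embeddings of \cite[Theorem~4.7.1]{Tri:78}, so the $L^\infty_tL^\infty_x$-factors of $Q$, $\nabla Q$ and $\nabla v$ arising from the trace space embedding become available, while the remaining genuinely top-order factor is placed in $L^p_tH^s_x$ with an appropriate $s$. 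Together with H\"older in time this produces a positive power of $T'$ in front of every term except the principal linearization, which is what drives the contraction. The cubic and quartic nuisance terms coming from $\tau_{\rl,1}$ and $\tau_{\rl,3}$ are handled by the same embeddings after distributing derivatives. The surface-integral nonlinearities in the Newton laws are treated via the trace theorem and the same embeddings applied on $\partial\cS_0$.

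Once $\Phi$ has a unique fixed point in $B_R(z^\star)$, I would argue that any solution in the regularity class of the theorem satisfies the same fixed-point identity and hence coincides with the constructed one; the intersection with $L_0^2(\cF_0)$ rules out pressure constants. Finally, the non-collision requirement $\dist(\cS(t),\partial\cO)>r/2$ used to justify the local transform in \autoref{sec:change of var} follows from the continuity of $h(t)=\int_0^t\bO(s)\ell(s)\rd s$ and $\bO(t)$ in time and the bound $\|(h,\bO-\Id)\|_\infty\lesssim T'^{1-1/p}\|(\ell,\omega)\|_{W^{1,p}(0,T')}$, after possibly shrinking $T'$, which in turn ensures $X(t,\cdot)$ stays a $C^2$-diffeomorphism.
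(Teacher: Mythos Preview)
Your overall plan---reference solution $z^\star$ to absorb the initial data, contraction on a ball around $z^\star$ in the maximal regularity class, linear theory from \autoref{sec:lin theory}, and smallness of the remainder via powers of $T'$---is the paper's approach. However, one point in your description, if taken literally, would make the argument fail for \emph{large} data.

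You keep only $\partial_t v-\Delta v+\nabla\pi$ on the left of the momentum equation and send all of $\cB_\tau(Q)$, $\cB_\sigma(Q)$ to the right. But these contain the top-order quasilinear coupling $\mdiv\bigl(S_\xi(Q)(\Delta-\Id_3)Q\bigr)$; its principal piece, e.g.\ $Q\,\nabla\Delta Q$, has $\rL^p(0,T';\rL^2)$-norm of order $\|Q\|_{\rL^\infty_{t,x}}\|Q\|_{\rL^p_t\rH^3_x}\approx \|z_0\|_{\rX_\gamma}(R+C_T^*)$, and \emph{no} positive power of $T'$ appears. For $\|z_0\|_{\rX_\gamma}$ large the self-map bound $\Cmr\cdot C(R,T')\le R$ cannot be achieved. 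The paper avoids this by linearizing at $\hQ=Q_0$: the left-hand side of \eqref{eq:linearized syst with hom initial data} carries $-\mdiv S_\xi(Q_0)(\Delta-\Id_3)\hQ$ in the fluid equation and $\tS_\xi(Q_0)\nabla\hv$ in the $Q$-equation (this is the version of \autoref{thm:lin theory Q-tensor interaction}(a) with $\hQ=Q_0\in\rC^1(\overline{\cF_0})$), and only the \emph{differences} $S_\xi(Q)-S_\xi(Q_0)$, $\tS_\xi(Q)-\tS_\xi(Q_0)$ go to the right. These are controlled by \autoref{lem:Lipschitz est of lc part} together with $\|\tz\|_{\BUC([0,T'];\rX_\gamma)}\le CR$ and $\|z^\star-z_0\|_{\BUC([0,T'];\rX_\gamma)}\to0$ from \autoref{prop:ref sol}; see \autoref{lem:est highest order F_1} and \autoref{lem:est highest order F_2}. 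Your phrase ``constant-coefficient version at $t=0$'' hints at this, but you apply it only to $\Sigma$ in the surface integrals---it is essential for the PDE coupling as well.

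Two minor corrections: the embeddings $\rB^{3-2/p}_{2p}(\cF_0)\hookrightarrow\rC^{2,\alpha}$ and $\rB^{2-2/p}_{2p}(\cF_0)\hookrightarrow\rC^{1,\alpha}$ are false in dimension~$3$ for any $p<\infty$; one only obtains $\rC^1(\overline{\cF_0})$ for $Q$ and $\rC^0(\overline{\cF_0})$ for $v$ (\autoref{lem:ests of max reg space}(b)), so $\nabla v\in\rL^\infty_{t,x}$ is not available. Fortunately it is never needed: for $(v\cdot\nabla)Q$ in $\rH^1$ the paper uses the mixed-derivative embedding of \autoref{lem:ests of max reg space}(a), and for $\cS(v,Q)$ one places $\nabla v$ in $\rL^p_t\rH^1_x$.
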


\autoref{thm:global strong wp for small data} can be reformulated in the following manner on the reference configuration.

\begin{thm}\label{thm:global strong wp for small data in ref config}
Let $p > 4$, let $\cO \subset \R^3$ be a bounded domain of class $\rC^3$, and consider the domains of the rigid body and the fluid at time zero $\cS_0$ and $\cF_0$.
Besides, assume that $\dist(\cS_0,\del \cO) > r$ for some $r > 0$.
Then there exists $\eta_0 > 0$ such that for all $\eta \in (0,\eta_0)$, there are $\delta_0 > 0$, $C > 0$ so that for every $\delta \in (0,\delta_0)$ and $z_0 = (v_0,Q_0,\ell_0,\omega_0) \in \overline{B}_{\rX_\gamma}(0,\delta)$, it holds that $X \in \rC^1([0,\infty);\rC^2(\R^3)^3)$, with $X(\tau,\cdot) \colon \cF(\tau) \to \cF_0$ being $\rC^2$-diffeomorphisms for all $\tau \in [0,\infty)$, and \eqref{eq:cv1}--\eqref{eq:cv3} has a unique solution $(v,Q,\ell,\omega,\pi)$ with
\begin{equation*}
    \begin{aligned}
        &\| \mre^{\eta(\cdot)} v \|_{\rL^p(0,\infty;\rH^2(\cF_0))} + \| \mre^{\eta(\cdot)} \del_t v \|_{\rL^p(0,\infty;\rL^2(\cF_0))} + \| \mre^{\eta(\cdot)} v \|_{\rL^\infty(0,\infty;\rB_{2p}^{2-\nicefrac{2}{p}}(\cF_0))}\\
        &+\| \mre^{\eta(\cdot)} Q \|_{\rL^p(0,\infty;\rH^3(\cF_0))} + \| \mre^{\eta(\cdot)} \del_t Q \|_{\rL^p(0,\infty;\rH^1(\cF_0))} + \| \mre^{\eta(\cdot)} Q \|_{\rL^\infty(0,\infty;\rB_{2p}^{3-\nicefrac{2}{p}}(\cF_0))}\\
        &+ \| \mre^{\eta(\cdot)} \ell \|_{\rW^{1,p}(0,\infty)} + \| \mre^{\eta(\cdot)} \omega \|_{\rW^{1,p}(0,\infty)} + \| \mre^{\eta(\cdot)} \pi \|_{\rL^p(0,\infty;\rH^1(\cF_0) \cap \rL_0^2(\cF_0))} \le C \delta.
    \end{aligned}
\end{equation*}
\end{thm}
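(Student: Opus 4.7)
The plan is to combine the maximal $\rL^p(\R_+)$-regularity with exponential decay for the linearization at zero, which will be established via spectral analysis of the fluid-structure operator in \autoref{sec:lin theory}, with a Banach fixed point argument carried out in an exponentially weighted function space. Concretely, I would introduce the solution space $\mathbb{E}_\eta$ whose norm is exactly the left-hand side of the inequality asserted in the theorem, and define $\Psi \colon \overline{B}_{\mathbb{E}_\eta}(0,R) \to \mathbb{E}_\eta$ to map a tuple $(v,Q,\ell,\omega,\pi)$ to the solution of the \emph{constant-coefficient linearization at zero} of \eqref{eq:cv1}--\eqref{eq:cv3} with initial data $z_0$ and right-hand side given by the remaining terms: the non-autonomous contributions $(\cM - \cM|_0)v$ and $(\cL_1 - \Delta)v$, the convective parts $\cN(v)$, $(\partial_t Y \cdot \nabla)Q$ and $(v\cdot \nabla)Q$, the Landau--de Gennes piece of $\cH(Q)$, the stretching-rotation term $\cS(v,Q)$, the divergence terms $\cB_\tau(Q)$ and $\cB_\sigma(Q)$, and the Coriolis-type terms $\omega\times\ell$ and $\omega\times(J_0 \omega)$. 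A fixed point of $\Psi$ is a solution to \eqref{eq:cv1}--\eqref{eq:cv3}.

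The nonlinear estimates rest on the observation that every term in the right-hand side above is either at least quadratic in $(v,Q,\ell,\omega,\pi)$ or enters through a factor measuring the deviation of the transform from the identity. The transform $X$, the inverse $Y$, and the quantities $g^{ij}-\delta_{ij}$, $\Gamma^i_{jk}$, $\partial_t Y$, $\Delta Y$ appearing in \eqref{eq:contrvar covar and Christoffel}--\eqref{eq:transformed Laplacians} are controlled through \eqref{eq:IVP X} by $(\ell,\omega)$; since $\mre^{\eta t} f(t) g(t) = (\mre^{\eta t/2}f(t))(\mre^{\eta t/2}g(t))$, the exponential weight is preserved under these products, and the quadratic/cubic structure yields bounds of the form $R^2$ whenever the input is of $\mathbb{E}_\eta$-norm at most $R$. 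Combining these multilinear estimates, which rely on the Sobolev embeddings $\rH^2(\cF_0) \hookrightarrow \rL^\infty(\cF_0)$ etc.\ and hence on $p > 4$, with $\| \Psi(0) \|_{\mathbb{E}_\eta} \le C \delta$ from maximal regularity, shows that $\Psi$ maps $\overline{B}_{\mathbb{E}_\eta}(0,R)$ into itself and is a contraction there, once first $R$ and then $\delta$ are chosen small enough. Banach's fixed point theorem then supplies the unique global solution with the claimed bound.

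The non-collision assertion falls out once the fixed point exists: from the weighted $\rW^{1,p}(0,\infty)$-bound on $\ell$ one deduces $\| h\|_{\rL^\infty(0,\infty)} \le C\delta$, and similarly $\| \bO - \Id_3\|_{\rL^\infty(0,\infty)} \le C\delta$ via $\bO' = m \bO$, so that the bound $\dist(\cS(t),\cS_0) \le \|h(t)\| + \|\bO(t)-\Id_3\|\, \mathrm{diam}(\cS_0) \le C\delta$ together with the hypothesis $\dist(\cS_0,\partial \cO) > r$ yields $\dist(\cS(t), \partial \cO) > \nicefrac{r}{2}$ for all $t \ge 0$ once $\delta_0$ is small. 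This legitimizes the cutoff $\chi$ in \autoref{sec:change of var} and the use of the transform on the entire half-line, closing the argument.

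The main obstacle I expect lies in the bookkeeping of the nonlinear estimates under the anisotropic ground space $\rL^2 \times \rH^1$: the cubic terms $\tr(Q^2)Q$ in $\cH$, the quadratic terms $Q \cL_2 Q - (\cL_2 Q) Q$ in $\cB_\sigma$, and the pieces of $\cB_\tau(Q)$ carry up to three spatial derivatives of $Q$, yet must be estimated in $\rL^p_\eta(\R_+;\rL^2)$ for the momentum equation and in $\rL^p_\eta(\R_+;\rH^1)$ for the $Q$-equation, with the corresponding shift in the maximal regularity scale. Tracking precisely which factor absorbs the weight $\mre^{\eta t}$ and which sits in a low-regularity $\rL^\infty$-embedding (so that the Sobolev product estimates actually close in the mixed-order setting), and simultaneously controlling the coefficient functions produced by the transform via the same small parameter $\delta$, is the delicate part that requires the anisotropic maximal regularity from \autoref{sec:lin theory} to be sharp enough.
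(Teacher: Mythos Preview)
Your proposal is correct and follows essentially the same route as the paper: a Banach fixed point argument in the exponentially weighted maximal regularity space, with the solution map defined through the linearization at zero (using \autoref{thm:lin theory Q-tensor interaction}(b)), quadratic-type estimates on the nonlinear remainders relying on \autoref{lem:aux ests trafo global} for the transform-dependent coefficients, and the non-collision deduced a posteriori from the smallness of $h$ and $\bO - \Id_3$. The only cosmetic difference is that the paper places the full weight $\mre^{\eta t}$ on one factor and bounds the other in an unweighted $\rL^\infty$-type norm (which is automatically controlled since $\mre^{\eta t} \ge 1$), rather than splitting the weight as $\mre^{\eta t/2}\cdot \mre^{\eta t/2}$; both variants work equally well here.
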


\section{Maximal regularity of the linearized problem and exponential stability}
\label{sec:lin theory}

This section is central for the article.
Very recently, the sectoriality of the linearized \emph{general} $Q$-tensor problem has been shown in \cite{HHW:24}.
We take advantage of this property to derive the maximal $\rL^p$-regularity of the suitably linearized interaction problem by introducing a so-called \emph{fluid-structure operator} tailored to our setting.
In a second step, we prove the exponential stability of the corresponding semigroup.

Let us first introduce some function spaces.
For $J = (0,T)$, $0 < T \le \infty$, the data space is defined by
\begin{equation}\label{eq:data space}
    \F_T = \F_T^v \times \F_T^Q \times \F_T^\ell \times \F_T^\omega = \rL^p\bigl(J;\rL^2(\cF_0)^3\bigr) \times \rL^p\bigl(J;\rH^1(\cF_0,\bS_0^3)\bigr) \times \rL^p(J)^3 \times \rL^p(J)^3.
\end{equation}
Next, with $z = (v,Q,\ell,\omega)$, we introduce $\rX_0 \coloneqq \rL_{\sigma}^2(\cF_0) \times \rH^1(\cF_0,\bS_0^3) \times \R^3 \times \R^3$ and
\begin{equation*}
    \rX_1 \coloneqq \{z \in \rY_1 : v = 0, \enspace \del_n Q = 0, \ton \del \cO, \tand v = \ell + \omega \times y, \enspace \del_n Q = 0, \ton \del \cS_0\}
\end{equation*}
for $\rY_1 \coloneqq \rH^2(\cF_0)^3 \cap \rL_{\sigma}^2(\cF_0) \times \rH^3(\cF_0,\bS_0^3) \times \R^3 \times \R^3$.
As in \cite[Lemma~5.3]{BBH:23}, $\rX_\gamma$ from \eqref{eq:X_gamma} can be deduced from~$\rX_0$ and $\rX_1$ by real interpolation via $\rX_\gamma = (\rX_0,\rX_1)_{1-\nicefrac{1}{p},p}$.
We then define the solution space $\E_T$ by
\begin{equation}\label{eq:max reg space}
    \E_T \coloneqq \rW^{1,p}(J;\rX_0) \cap \rL^p(J;\rX_1).
\end{equation}
By $\prescript{}{0}{\E_T}$, we denote the associated space with homogeneous initial values.
For simplicity, we also write~$\E_T^Q$ for the $Q$-component of $\E_T$. 
In order to account for the pressure, we further introduce
\begin{equation}\label{eq:max reg space with pressure}
    \tE_T \coloneqq \E_T \times \E_T^{\pi}, \twhere \E_T^{\pi} \coloneqq \rL^p\bigl(J;\rH^1(\cF_0) \cap \rL_0^2(\cF_0)\bigr),
\end{equation}
and $\| \cdot \|_{\E_T^{\pi}}$ represents the norm associated to $\E_T^{\pi}$.
With regard to the stress tensor part in the surface integral, we include precisely its linear components to establish estimates in the proof of \autoref{thm:global strong wp for small data} in \autoref{sec:proof of glob strong wp}.
In addition to the usual Cauchy stress tensor, we thus incorporate $-\nicefrac{2 \xi}{3} (\Delta Q - Q)$, i.\ e.,
\begin{equation}\label{eq:lin stress tensor}
    \rT(v,\pi,Q) \coloneqq \nabla v + (\nabla v)^\top - \pi \Id_3 - \frac{2 \xi}{3} (\Delta Q - Q).
\end{equation}
We need to include the $Q$-terms in order to obtain the exponential stability for the linearization at zero, and to establish the estimates of the surface integrals involving the stress tensor.

As the maximal $\rL^p(\R_+)$-regularity already requires the associated operator to be invertible, we introduce a shift, i.\ e., instead of considering the original linearized problem, we add an artificial term of the form~$\mu \Id$ on the left-hand side of the linearized problem, see \eqref{eq:lin Q-tensor interaction} below.
It does not pose any problems for the local well-posedness, but it is only critical for the global well-posedness.
Thus, for $\mu \ge 0$ as well as~$\hQ \in \rC^1(\overline{\cF_0},\bS_0^3)$, $(f_1,f_2,f_3,f_4) \in \F_T$ and $(v_0,Q_0,\ell_0,\omega_0) \in \rX_\gamma$, recalling $S_\xi$ and $\tS_\xi$ from \eqref{eq:S_xi and tS_xi}, we consider the linearization
\begin{equation}\label{eq:lin Q-tensor interaction}
\left\{
    \begin{aligned}
        \partial_t {v} + (\mu -\Delta ){v} +\nabla \pi - \mdiv S_\xi(\hQ)(\Delta - \Id_3)Q &=f_1, \enspace \mdiv {v} = 0, &&\tin (0,T) \times \cF_0,\\
        {\partial_t {Q}} + \tS_\xi(\hQ) \nabla v + (\mu -(\Delta - \Id_3)) {Q} 
        &= f_2, &&\tin (0,T) \times \cF_0,\\
        \mS({\ell})' + \mu \ell + \int_{\del \cS_0} \rT(v,\pi,Q) n \rd \Gamma &= f_3, &&\tin (0,T),\\ 
        J_0({\omega})' + \mu \omega + \int_{\del \cS_0} y \times \rT(v,\pi,Q) n \rd \Gamma &= f_4, &&\tin (0,T),\\
        {v}={\ell}+{\omega}\times y, \ton (0,T) \times \partial\cS_0, \enspace {v} =0, &\ton (0,T) \times \partial \cO, \enspace
        \partial_n Q = 0, &&\ton (0,T) \times \partial \cF_0,\\
        v(0) = v_0, \enspace Q(0) = Q_0, \enspace \ell(0) &= \ell_0 \tand \omega(0) = \omega_0. 
    \end{aligned}
\right.
\end{equation}
Below, we assert the main result of this section on the maximal $\rL^p$-regularity of \eqref{eq:lin Q-tensor interaction}, up to a shift by~$\mu > 0$ for general $\hQ \in \rC^1(\overline{\cF_0},\bS_0^3)$ and without shift in the case $\hQ = 0$.
Even though we only require this result for $p > 4$, we provide the more general case $p \in (1,\infty) \setminus \{\nicefrac{4}{3}\}$ here.
We do not need a more restrictive condition on $p$, because we consider a linearized problem and take into account~$\hQ \in \rC^1(\overline{\cF_0},\bS_0^3)$.

\begin{thm}\label{thm:lin theory Q-tensor interaction}
Let $p \in (1,\infty) \setminus \{\nicefrac{4}{3}\}$, $\hQ \in \rC^1(\overline{\cF_0},\bS_0^3)$, $T \in (0,\infty]$, $(f_1,f_2,f_3,f_4) \in \F_T$ as well as~$(v_0,Q_0,\ell_0,\omega_0) \in \rX_\gamma$.
\begin{enumerate}[(a)]
    \item There is $\mu_0 \ge 0$ such that for all $\mu > \mu_0$, \eqref{eq:lin Q-tensor interaction} admits a unique solution $(v,Q,\ell,\omega,\pi) \in \tE_T$, and there exists $\Cmr > 0$ such that $\| (v,Q,\ell,\omega,\pi) \|_{\tE_T} \le \Cmr \bigl(\| (f_1,f_2,f_3,f_4) \|_{\F_T} + \| (v_0,Q_0,\ell_0,\omega_0) \|_{\rX_\gamma}\bigr)$.
    The constant $\Cmr > 0$ can be chosen independent of $T$ in the case of homogeneous initial values.
    \item If $\hQ = 0$, then the assertion from~(a) is also valid without shift, that means, there exists a unique solution $(v,Q,\ell,\omega,\pi) \in \tE_T$ to~\eqref{eq:lin Q-tensor interaction} for $\mu = 0$.
    In particular, there exists $\eta_0 > 0$ such that for every $\eta \in [0,\eta_0)$, if $\mre^{\eta t} (f_1,f_2,f_3,f_4) \in \F_\infty$, then there is a unique solution $\mre^{\eta t} (v,Q,\ell,\omega,\pi) \in \tE_\infty$ to \eqref{eq:lin Q-tensor interaction} with $\mu = 0$.
    Besides, there exists a constant $\Cmr > 0$ such that
    \begin{equation*}
        \begin{aligned}
            &\quad \| \mre^{\eta(\cdot)} v \|_{\E_\infty^v} + \| \mre^{\eta(\cdot)} \pi \|_{\E_\infty^{\pi}} + \| \mre^{\eta(\cdot)} Q \|_{\E_\infty^Q} + \| \mre^{\eta(\cdot)} \ell \|_{\rW^{1,p}(0,\infty)} + \| \mre^{\eta(\cdot)} \omega \|_{\rW^{1,p}(0,\infty)}\\
            &\le \Cmr\bigl(\| (v_0,Q_0,\ell_0,\omega_0) \|_{\rX_\gamma} + \| \mre^{\eta(\cdot)}(f_1,f_2,f_3,f_4) \|_{\F_\infty}\bigr).
        \end{aligned}
    \end{equation*}
\end{enumerate}
\end{thm}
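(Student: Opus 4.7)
The plan is a monolithic analysis of \eqref{eq:lin Q-tensor interaction}: lift the inhomogeneous interface condition, eliminate the pressure so as to produce an added mass matrix in the rigid body block, recast the problem as an abstract Cauchy problem for a fluid-structure operator on the anisotropic space $\rX_0=\rL_\sigma^2(\cF_0)\times\rH^1(\cF_0,\bS_0^3)\times\R^3\times\R^3$, and invoke the maximal $\rL^p$-regularity of the pure linearized general $Q$-tensor block on $\rL_\sigma^2\times\rH^1$ established in \cite{HHW:24} via a Kalton-Weis perturbation argument.

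For $(\ell,\omega)\in\R^3\times\R^3$ I would first fix a bounded linear lift $\cN(\ell,\omega)\in\rH^2(\cF_0)^3\cap\rL_\sigma^2(\cF_0)$ matching $\ell+\omega\times y$ on $\del\cS_0$ and vanishing on $\del\cO$, and pass to the unknown $w\coloneqq v-\cN(\ell,\omega)$, so that the velocity satisfies a homogeneous Dirichlet condition at the cost of lower-order source terms. The pressure $\pi$ is then recovered from the weak Neumann problem obtained by taking the divergence of \eqref{eq:lin Q-tensor interaction}$_1$, and splits as $\pi=\pi_f[v,Q]+\pi_\mrm[\ell',\omega']$, with $\pi_\mrm$ the linear response to the rigid-body accelerations. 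Substituting $\pi_\mrm$ into the surface integrals in \eqref{eq:lin Q-tensor interaction}$_3$--\eqref{eq:lin Q-tensor interaction}$_4$ produces a symmetric positive semi-definite added mass matrix $\bM_A$, which is absorbed into $\diag(\mS\Id_3,J_0)$. The main analytic subtlety lies here: since $\pi\in\rL^p(J;\rH^1(\cF_0))$ only, its trace on $\del\cS_0$ is borderline, and the third-order term $\mdiv\,S_\xi(\hQ)(\Delta-\Id_3)Q$ in \eqref{eq:lin Q-tensor interaction}$_1$ must be carefully moved to the right-hand side of the Neumann problem so that $\pi_f$ lies in the correct regularity class. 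The explicit inclusion of $-\nicefrac{2\xi}{3}(\Delta Q-Q)$ in the linearized stress tensor \eqref{eq:lin stress tensor} is engineered precisely so that the corresponding surface integrals can be handled, after one integration by parts against $\cN(\ell,\omega)$, by the $\rH^1$-norm of $Q$ and the $\rH^2$-norm of $v$, while simultaneously keeping $\diag(\mS\Id_3,J_0)+\bM_A$ symmetric positive definite.

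The resulting system takes the abstract form $z'+\mu z+\cA z=F(f_1,f_2,f_3,f_4)$ with $z(0)=(v_0,Q_0,\ell_0,\omega_0)$, where $\cA\colon D(\cA)=\rX_1\to\rX_0$ is the fluid-structure operator. Its diagonal $(v,Q)$-block is precisely the linearized general $Q$-tensor operator for which $\cR$-sectoriality and maximal $\rL^p$-regularity after a shift are proved in \cite{HHW:24}; the $(\ell,\omega)$-block is a bounded operator on the finite-dimensional space $\R^3\times\R^3$; the off-diagonal couplings are bounded lower-order operators into $\rX_0$, using that $\hQ\in\rC^1(\overline{\cF_0},\bS_0^3)$ makes the coefficients of $S_\xi(\hQ)$ and $\tS_\xi(\hQ)$ Lipschitz. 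A Kalton-Weis perturbation argument therefore upgrades $\cR$-sectoriality of the diagonal block to $\cR$-sectoriality of $\mu+\cA$ of angle smaller than $\nicefrac{\pi}{2}$ for every $\mu>\mu_0$ with some $\mu_0\ge 0$. Weis' theorem then yields the maximal $\rL^p(J;\rX_0)$-regularity and the estimate of~(a); the $T$-independence of $\Cmr$ in the homogeneous case follows from translation invariance of the estimate on $\R_+$ combined with the trace embedding $\prescript{}{0}{\E_T}\hookrightarrow\rC_0([0,T];\rX_\gamma)$.

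For part~(b), with $\hQ=0$ the coupling collapses to $S_\xi(0)A=-\nicefrac{2\xi}{3}A$ and $\tS_\xi(0)B=-\nicefrac{\xi}{3}(B+B^\top)$, which is captured exactly by the $-\nicefrac{2\xi}{3}(\Delta Q-Q)$ contribution to \eqref{eq:lin stress tensor}. A testing computation modelled on \autoref{prop:energy and equilibria} gives a strict dissipativity estimate of the form $\frac{1}{2}\frac{\rd}{\rd t}\|z\|^2+c\bigl(\|\nabla v\|_{\rL^2}^2+\|(\Id_3-\Delta)Q\|_{\rL^2}^2\bigr)\le 0$ in an equivalent Hilbert norm on $\rX_0$, so $\cA$ is accretive. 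Compactness of the embedding $\rX_1\hookrightarrow\rX_0$ forces $\cA$ to have compact resolvent and thus pure point spectrum; dissipativity combined with \autoref{prop:energy and equilibria}(b) and \autoref{rem:spat const equilibria}, which exclude any nonzero stationary solution in $\rX_0$, places $\sigma(\cA)$ strictly in $\{\Rep\lambda\le -\eta_0\}$ for some $\eta_0>0$. For $\eta\in[0,\eta_0)$ the substitution $\tz(t)=\mre^{\eta t}z(t)$ transforms \eqref{eq:lin Q-tensor interaction} with $\mu=0$ into $\tz'+(\cA-\eta)\tz=\mre^{\eta t}F$, and since $\cA-\eta$ remains $\cR$-sectorial and invertible, the maximal $\rL^p(\R_+;\rX_0)$-regularity of~(a) applied to $\cA-\eta$ yields the weighted estimate in~(b).
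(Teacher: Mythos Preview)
Your overall strategy coincides with the paper's: lift the interface condition via a Stokes-type problem, split the pressure so as to produce the added-mass matrix in the rigid-body block, reformulate as an abstract Cauchy problem for a fluid-structure operator $\Afs(\hQ)$ on $\rX_0$, and for~(a) obtain maximal $\rL^p$-regularity by perturbing the sectorial liquid-crystal block $A_\xi(\hQ)$ from \cite{HHW:24}. Two minor technical differences: the paper works directly with sectoriality of angle $<\nicefrac{\pi}{2}$ and the Hilbert-space equivalence with maximal regularity, rather than $\cR$-sectoriality plus Weis' theorem; and the off-diagonal blocks $C_1,C_2,C_3$ are not \emph{bounded} operators into $\rX_0$ but only relatively $A_\xi(\hQ)$-bounded with bound zero (for instance $C_2 Q$ contains $\Delta Q|_{\del\cS_0}$, which needs $Q\in\rH^{\nicefrac{5}{2}+\eps}$, and is controlled by interpolation and Young's inequality).

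There is, however, a genuine gap in your argument for~(b). You invoke \autoref{prop:energy and equilibria}(b) and \autoref{rem:spat const equilibria} to rule out nontrivial kernel elements of the \emph{linear} operator $\Afs(0)$, but those results classify the equilibria of the \emph{nonlinear} problem (namely $H(Q_*)=0$) and say nothing about $\ker\Afs(0)$ or about eigenvalues on $\mri\R\setminus\{0\}$. The paper instead tests the resolvent problem \eqref{eq:resolvent probl lin Q-tensor interaction} with zero right-hand side directly: pairing the $v$-equation with $\ovv$, the $Q$-equation with $\Delta\ovQ$, and the rigid-body equations with $\ovell,\ovomega$, and using the identity $S_\xi(0)(-\Delta+\Id_3)Q=\nicefrac{2\xi}{3}(\Delta Q-Q)$ (which is precisely why $-\nicefrac{2\xi}{3}(\Delta Q-Q)$ was built into $\rT$) together with \autoref{lem:sect of lc op and tested eq lc op}(b), one obtains
\[
\Rep\lambda\bigl(\|v\|_{\rL^2}^2+\|\nabla Q\|_{\rL^2}^2+\mS|\ell|^2\bigr)+\Rep(\lambda J_0\omega\cdot\ovomega)+2\|\D(v)\|_{\rL^2}^2+\|Q\|_{\rH^2}^2+\|\nabla Q\|_{\rL^2}^2=0.
\]
For $\Rep\lambda=0$ this forces $\D(v)=0$ and $Q=0$ directly, hence $v=0$ by the Dirichlet condition on $\del\cO$, and then $\ell=\omega=0$ from the interface condition. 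This handles \emph{all} $\lambda$ with $\Rep\lambda\ge0$, not just $\lambda=0$. Your ``testing computation modelled on \autoref{prop:energy and equilibria}'' is exactly the right tool; you should carry it through to this conclusion rather than defer to the nonlinear equilibrium classification, which does not apply to the linearized operator.
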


A remark on extensions of the Hilbert space case is in order.

\begin{rem}
From \cite[Proposition~7.17]{AH:23}, the maximal $\rL^p$-regularity of the linearized liquid crystal problem on $\rL^q \times \rW^{1,q}$ follows for $q$ close to $2$.
Thus, we expect that an extension of \autoref{thm:lin theory Q-tensor interaction} to $q$ close to~$2$ is possible.
The case of general $q \in (1,\infty)$, however, remains an open problem.
\end{rem}

The rest of this section is dedicated to the proof of \autoref{thm:lin theory Q-tensor interaction}, and the main idea is to reformulate~\eqref{eq:lin Q-tensor interaction} in operator form by invoking an adapted \emph{fluid-structure operator}.
At first, we consider the $Q$-tensor part of the linearized problem~\eqref{eq:lin Q-tensor interaction} with homogeneous boundary conditions.
For simplicity, we omit the shift, while the interface condition is handled by a lifting procedure later on.
We denote by $u = (v,Q)$ the principle variable.
For suitable $f_1$, $f_2$, $v_0$, $Q_0$, and for $S_\xi$ and $\tS_\xi$ from \eqref{eq:S_xi and tS_xi}, the resulting problem is
\begin{equation}\label{eq:lin Q-tensor lc}
\left\{
    \begin{aligned}
        \partial_t {v} -\Delta {v} +\nabla \pi - \mdiv S_\xi(\hQ)(\Delta - \Id_3)Q &=f_1, \enspace \mdiv {v} = 0, &&\tin (0,T) \times \cF_0,\\
        {\partial_t {Q}} + \tS_\xi(\hQ) \nabla v - (\Delta - \Id_3) {Q} 
        &= f_2, &&\tin (0,T) \times \cF_0,\\
        {v} = 0, \enspace \del_n Q 
        &= 0, &&\ton (0,T) \times \partial \cF_0,\\
        v(0) = v_0, \enspace Q(0) 
        &= Q_0, &&\tfor y \in \cF_0. 
    \end{aligned}
\right.
\end{equation}
Following \cite[Section~5]{HHW:24}, we introduce the operator corresponding to \eqref{eq:lin Q-tensor lc}.
First, we recall the Helmholtz projection $\bP \colon \rL^2(\cF_0)^3 \to \rL_{\sigma}^2(\cF_0)$ on $\rL^2$ and invoke the Stokes operator $A_2$ on $\rL_{\sigma}^2(\cF_0)$ defined by
\begin{equation}\label{eq:Stokes op on L2}
    A_2 v \coloneqq \bP \Delta v, \tfor v \in \rD(A_2) \coloneqq \rH^2(\cF_0)^3 \cap \rH_0^1(\cF_0)^3 \cap \rL_{\sigma}^2(\cF_0).
\end{equation}
Besides, the modified Neumann-Laplacian on~$\rH^1(\cF_0,\bS_0^3)$ is defined by
\begin{equation*}
    \DeltaN^1 Q \coloneqq (\Delta - \Id_3) Q, \tfor Q \in \rD(\DeltaN^1) \coloneqq \{Q \in \rH^3(\cF_0,\bS_0^3) : \del_n Q = 0 \ton \del \cF_0\}.
\end{equation*}
We introduce $\rX_0^\lc \coloneqq \rL_{\sigma}^2(\cF_0) \times \rH^1(\cF_0,\bS_0^3)$ as well as $\rX_1^\lc \coloneqq \rD(A_2) \times \rD(\DeltaN^1)$ for the lemma below.
Given~$\hQ \in \rC^1(\overline{\cF_0},\bS_0^3)$, we define the operator matrix $A_\xi(\hQ) \colon \rX_1^\lc \to \rX_0^\lc$ by
\begin{equation}\label{eq:op matrix liquid crystals}
    A_\xi(\hQ) \coloneqq \begin{pmatrix}
        -A_2 & -\bP \mdiv S_\xi(\hQ)(\Delta - \Id_3)\\
        \tS_\xi(\hQ) \nabla & -\DeltaN^1
    \end{pmatrix}.
\end{equation}
We collect auxiliary results for our analysis in the lemma below.
Part~(a) will be an ingredient in the proof of the sectoriality of the operator associated with the linearized interaction problem, while part~(b) will be used for the spectral analysis.

\begin{lem}[{\cite[Section~5]{HHW:24}}]\label{lem:sect of lc op and tested eq lc op}
Let $\hQ \in \rC^1(\overline{\cF_0},\bS_0^3)$ and $u = (v,Q) \in \rX_1^\lc$.
Then
\begin{enumerate}[(a)]
    \item the operator $A_\xi(\hQ)$ from \eqref{eq:op matrix liquid crystals} is sectorial on $\rX_0^\lc$ with spectral angle $\phi_{A_\xi(\hQ)} < \nicefrac{\pi}{2}$, and
    \item $\langle A_\xi(\hQ) u,u\rangle_{\rX_0^\lc} = \| \nabla v \|_{\rL^2(\cF_0)}^2 + \| Q \|_{\rH^2(\cF_0)}^2 + \| \nabla Q \|_{\rL^2(\cF_0)}^2 + \mri \Imp(2\langle \nabla v,S_\xi(\hQ)(-\Delta + \Id_3)Q\rangle_{\rL^2(\cF_0)}$.
\end{enumerate}
\end{lem}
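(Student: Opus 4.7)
The plan is to establish part~(b) by a direct integration-by-parts calculation that exploits \autoref{lem:rel of S_xi and tS_xi} to transfer $S_\xi(\hQ)$ and $\tS_\xi(\hQ)$ across the off-diagonal coupling, and then to deduce sectoriality in~(a) by combining the resulting numerical-range estimate with a Lax--Milgram invertibility argument.

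For~(b), expand the inner product on $\rX_0^\lc$ into its four contributions. The diagonal Stokes term yields $\langle -A_2 v,v\rangle_{\rL^2} = \|\nabla v\|_{\rL^2}^2$ after integration by parts using $v|_{\partial \cF_0}=0$, and the diagonal $Q$-term is handled via the identity $\langle f,Q\rangle_{\rH^1} = \langle f,(-\Delta+\Id_3)Q\rangle_{\rL^2}$, valid for $Q$ with $\partial_n Q = 0$, so that
\begin{equation*}
    \langle -\DeltaN^1 Q,Q\rangle_{\rH^1} = \|(-\Delta+\Id_3)Q\|_{\rL^2}^2 = \|\Delta Q\|_{\rL^2}^2 + 2\|\nabla Q\|_{\rL^2}^2 + \|Q\|_{\rL^2}^2 = \|Q\|_{\rH^2}^2 + \|\nabla Q\|_{\rL^2}^2,
\end{equation*}
where one uses the equivalent $\rH^2$-norm $\|Q\|^2 + \|\nabla Q\|^2 + \|\Delta Q\|^2$ afforded by the Neumann condition. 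For the Stokes-side cross term, the solenoidality of $v$ removes $\bP$, and a further integration by parts with $v|_{\partial \cF_0}=0$ gives $\langle -\bP\mdiv S_\xi(\hQ)(\Delta-\Id_3)Q,v\rangle_{\rL^2} = -\langle S_\xi(\hQ)(-\Delta+\Id_3)Q,\nabla v\rangle_{\rL^2}$. For the $Q$-side cross term, the Neumann trick combined with \autoref{lem:rel of S_xi and tS_xi} (applicable since $\nabla v \in \bM_0^3$ by $\mdiv v=0$ and $(-\Delta+\Id_3)Q \in \bS_0^3$) gives $\langle \tS_\xi(\hQ)\nabla v,Q\rangle_{\rH^1} = \langle \nabla v,S_\xi(\hQ)(-\Delta+\Id_3)Q\rangle_{\rL^2}$. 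The last two cross terms are complex conjugates of one another, so their sum is $2\mri\,\Imp\langle \nabla v,S_\xi(\hQ)(-\Delta+\Id_3)Q\rangle_{\rL^2}$, yielding the identity.

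For~(a), the formula in~(b) gives $\Rep\langle A_\xi(\hQ)u,u\rangle_{\rX_0^\lc} = \|\nabla v\|_{\rL^2}^2 + \|Q\|_{\rH^2}^2 + \|\nabla Q\|_{\rL^2}^2$, strictly positive on $\rD(A_\xi(\hQ))\setminus\{0\}$ by Poincar\'e's inequality for $v$ and the $\Id_3$-term for $Q$. Cauchy--Schwarz, the uniform bound on $S_\xi(\hQ)$ from $\hQ \in \rC^1(\overline{\cF_0},\bS_0^3)$, and Young's inequality then yield $|\Imp\langle A_\xi(\hQ)u,u\rangle| \le C\|\nabla v\|_{\rL^2}(\|\Delta Q\|_{\rL^2} + \|Q\|_{\rL^2}) \le C'\Rep\langle A_\xi(\hQ)u,u\rangle$, which confines the numerical range to a closed sector of opening $\arctan C' < \nicefrac{\pi}{2}$. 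To promote this accretivity to sectoriality, I would view $\mathfrak{a}(u,u') := \langle A_\xi(\hQ)u,u'\rangle_{\rX_0^\lc}$ as a bounded, coercive sesquilinear form on the form domain $V := \{v \in \rH_0^1(\cF_0)^3 \cap \rL_\sigma^2(\cF_0)\} \times \{Q \in \rH^2(\cF_0,\bS_0^3) : \partial_n Q = 0\}$; Lax--Milgram then supplies a resolvent point outside $\overline{\Sigma_\phi}$, and elliptic regularity for the Stokes and Neumann-Laplace systems, with the lower-order off-diagonal coupling bootstrapped, lifts weak solutions from $V$ to $\rX_1^\lc$. Combined with the numerical-range inclusion this produces sectoriality of $A_\xi(\hQ)$ with $\phi_{A_\xi(\hQ)} < \nicefrac{\pi}{2}$.

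The principal obstacle is the bookkeeping of the boundary conditions in concert with \autoref{lem:rel of S_xi and tS_xi}: reducing the third-order coupling $\bP\mdiv S_\xi(\hQ)(\Delta-\Id_3)Q$ to a pairing against $\nabla v$ requires simultaneously that $v$ be solenoidal and satisfy Dirichlet data, while transferring $\tS_\xi(\hQ)$ to $S_\xi(\hQ)$ in the $Q$-equation requires the Neumann condition on $Q$ in order to express the $\rH^1$-pairing as an $\rL^2$-pairing against $(-\Delta+\Id_3)Q$. It is precisely this match of the boundary conditions with the anisotropic ground space $\rL_\sigma^2 \times \rH^1$ and with the algebraic adjoint identity of \autoref{lem:rel of S_xi and tS_xi} that makes the cross-coupling manifest as a purely imaginary contribution.
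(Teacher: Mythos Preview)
The paper does not prove this lemma; it imports both assertions directly from \cite[Section~5]{HHW:24}. Your argument for~(b) is correct and is precisely the computation underlying that reference: the Dirichlet condition on $v$ handles the Stokes diagonal and the $\mdiv$-cross term, the Neumann condition on $Q$ converts the $\rH^1$-pairings into $\rL^2$-pairings against $(-\Delta+\Id_3)Q$, and \autoref{lem:rel of S_xi and tS_xi} makes the two cross terms mutual conjugates. Your identification of $\|(-\Delta+\Id_3)Q\|_{\rL^2}^2$ with $\|Q\|_{\rH^2}^2 + \|\nabla Q\|_{\rL^2}^2$ is exactly the choice of equivalent $\rH^2$-norm used in \cite{HHW:24}.

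For~(a), your numerical-range bound and Lax--Milgram step on $V$ are fine, but the phrase ``with the lower-order off-diagonal coupling bootstrapped'' hides a genuine difficulty. The off-diagonal entries are \emph{not} lower order in the anisotropic scaling $\rL^2\times\rH^1 \to \rH^2\times\rH^3$: $\bP\mdiv S_\xi(\hQ)(\Delta-\Id_3)$ is third order in $Q$ and $\tS_\xi(\hQ)\nabla$ is first order in $v$, so every block is of highest relative order. A na\"ive bootstrap stalls: from the weak solution $(v,Q)\in V$ you only get $(-\Delta+\Id_3)Q\in\rL^2$, hence $\mdiv S_\xi(\hQ)(\Delta-\Id_3)Q\in\rH^{-1}$, and Stokes regularity returns merely $v\in\rH^1$; conversely, lifting $Q$ to $\rH^3$ needs $v\in\rH^2$.

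The way out is to substitute. Use the $Q$-equation $(-\Delta+\Id_3)Q = f_2 - \tS_\xi(\hQ)\nabla v$ in the weak $v$-equation to obtain, for all $v'\in\rH_0^1\cap\rL^2_\sigma$,
\[
\bigl\langle (\Id + S_\xi(\hQ)\tS_\xi(\hQ))\nabla v,\nabla v'\bigr\rangle_{\rL^2} = \bigl\langle f_1 - \mdiv(S_\xi(\hQ)f_2),v'\bigr\rangle_{\rL^2},
\]
with right-hand side in $\rL^2$ since $f_2\in\rH^1$ and $\hQ\in\rC^1$. The coefficient $\Id + S_\xi(\hQ)\tS_\xi(\hQ)$ is uniformly positive on $\bM_0^3$ because \autoref{lem:rel of S_xi and tS_xi} gives $\langle S_\xi(\hQ)\tS_\xi(\hQ)M,M\rangle = |\tS_\xi(\hQ)M|^2\ge 0$; hence this is a uniformly elliptic Stokes-type system with $\rC^1$ coefficients, and standard regularity yields $v\in\rH^2$. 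Then $\tS_\xi(\hQ)\nabla v\in\rH^1$, so $(-\Delta+\Id_3)Q\in\rH^1$ and Neumann regularity gives $Q\in\rH^3$. With this correction, your route to sectoriality is complete.
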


\subsection{Lifting operators and added mass}\label{ssec:lifting ops and added mass}
\

In this subsection, we address the stress tensor part in the surface integrals.
It is sufficient to concentrate on the linearized interaction problem of the fluid part at first and to handle then the $Q$-term in \eqref{eq:lin stress tensor} by means of perturbation theory.
Hence, to shorten notation, we introduce the Cauchy stress tensor
\begin{equation}\label{eq:Cauchy stress}
    \Tilde{\rT}(v,\pi) \coloneqq \nabla v + (\nabla v)^\top - \pi \Id_3.
\end{equation}
A key concept is the \emph{added mass operator}, see e.\ g.\ \cite{Galdi:02} or \cite[Section~4.2]{GGH:13} for further background.
The most important aspects about this approach are the existence of lifting operators for the inhomogeneous interface condition and a representation formula of the pressure appearing in the surface integrals.
This will be of central relevance for the reformulation of \eqref{eq:lin Q-tensor interaction} in operator form.
Let us consider
\begin{equation}\label{eq:stat problem fsi}
    \left\{
    \begin{aligned}
        -\Delta w + \nabla \psi
        &= 0, \enspace \mdiv w = 0, &&\tfor y \in \cF_0,\\
        w
        &= 0, \tfor y \in \del\cO, \enspace
        w
        = \ell + \omega \times y, &&\tfor y \in \del\cS_0,\\
        \int_{\cF_0} \psi \rd y
        &= 0.
    \end{aligned}
    \right.
\end{equation}
The following result on the solvability of \eqref{eq:stat problem fsi} is classical.
We refer for instance to \cite[Lemma~3.6]{MT:18}.
It provides an elementary representation of the solution to the Stokes problem with inhomogeneous boundary conditions at the interface, capturing the motion of the rigid body.

\begin{lem}\label{lem:sol to stat problem fsi}
Consider $(\ell,\omega) \in \C^3 \times \C^3$ and the canonical basis $\{e_i\}$ in $\C^3$.
Then the solution $(w,\psi)$ to \eqref{eq:stat problem fsi} admits the representation $w = \sum_{i=1}^3 \ell_i W_i + \sum_{i=4}^6 \omega_{i-3} W_i$ and $\psi = \sum_{i=1}^3 \ell_i \Psi_i + \sum_{i=4}^6 \omega_{i-3} \Psi_i$, where $(W_i,\Psi_i)$, $i=1,\dots,6$, in turn satisfy $\int_{\cF_0} \Psi_i \rd y = 0$ and
\begin{equation*}
    \left\{
    \begin{aligned}
        -\Delta W_i + \nabla \Psi
        &= 0, \enspace \mdiv W_i = 0, &&\tfor y \in \cF_0,\\
        W_i
        &= 0, &&\tfor y \in \del\cO,\\
        W_i
        = e_i, \tfor i=1,2,3 \tand W_i 
        &= e_{i-3} \times y, \tfor i=4,5,6, &&\tfor y \in \del\cS_0,
    \end{aligned}
    \right.
\end{equation*}
$\B_{i,j} \coloneqq \int_{\cF_0} D W_i : D W_j \rd y$ is invertible, where $D W_i$ represents the gradient of $W_i$, and for~$\Tilde{\rT}$ from \eqref{eq:Cauchy stress}, we have $\begin{pmatrix}
        \int_{\del\cS_0} \Tilde{\rT}(w,\psi) n \rd \Gamma\\
        \int_{\del\cS_0} y \times \Tilde{\rT}(w,\psi) n \rd \Gamma
    \end{pmatrix} = \B \binom{\ell}{\omega}$.
\end{lem}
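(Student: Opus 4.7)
My approach is to decompose $(w,\psi)$ via six elementary Stokes cell problems and then to use a single integration-by-parts identity which simultaneously yields the invertibility of $\B$ and the claimed formula linking $\B(\ell,\omega)^{\top}$ to the traction integrals of $\Tilde{\rT}(w,\psi)$ on $\partial\cS_0$.

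\textbf{Step 1 (Cell problems and representation).} For each $i\in\{1,\dots,6\}$, I prescribe on $\partial\cS_0$ either the constant $e_i$ (for $i\le 3$) or the affine field $e_{i-3}\times y$ (for $i\ge 4$), and zero on $\partial\cO$. In every case the flux condition through $\partial\cF_0$ is satisfied: on $\partial\cO$ the datum vanishes, and on $\partial\cS_0$ the divergence theorem on $\cS_0$ combined with $\mdiv e_i=0$ and $\mdiv(a\times y)=0$ for constant $a$ gives vanishing flux. Since $\partial\cF_0$ is of class $\rC^3$, classical stationary Stokes theory produces a unique $(W_i,\Psi_i)\in \rH^2(\cF_0)^3\times \rH^1(\cF_0)$ with $\int_{\cF_0}\Psi_i\rd y=0$. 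Because $\ell+\omega\times y=\sum_{i=1}^3 \ell_i e_i+\sum_{i=4}^6 \omega_{i-3}(e_{i-3}\times y)$, the linearity and uniqueness of the Stokes system immediately give the representations of $w$ and $\psi$ asserted in the lemma.

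\textbf{Step 2 (Key bilinear identity and boundary formula).} Testing $\mdiv\Tilde{\rT}(w,\psi)=0$ against $W_j$ and integrating by parts yields
\begin{equation*}
\int_{\cF_0}\Tilde{\rT}(w,\psi):\nabla W_j\rd y=\int_{\partial\cF_0}\Tilde{\rT}(w,\psi)n\cdot W_j\rd\Gamma.
\end{equation*}
On the left, $\mdiv W_j=0$ kills the pressure contribution, and symmetrisation via $D(w):=\nicefrac{1}{2}(\nabla w+(\nabla w)^{\top})$ converts the bulk integral into $2\int_{\cF_0}D(w):D(W_j)\rd y$; substituting the expansion of $w$ from Step~1 produces, with the normalisation of $\B$ adopted in the lemma, the $j$-th entry of $\B(\ell,\omega)^{\top}$. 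On the right, $W_j=0$ on $\partial\cO$ leaves only an integral over $\partial\cS_0$: for $j\le 3$, $W_j=e_j$ picks out the $j$-th Cartesian component of $\int_{\partial\cS_0}\Tilde{\rT}(w,\psi)n\rd\Gamma$, while for $j\ge 4$, the choice $W_j=e_{j-3}\times y$ combined with the cyclic identity $a\cdot(b\times c)=b\cdot(c\times a)$ picks out the $(j-3)$-th component of $\int_{\partial\cS_0}y\times\Tilde{\rT}(w,\psi)n\rd\Gamma$. Stacking the six equalities produces the vector formula of the lemma.

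\textbf{Step 3 (Invertibility of $\B$).} The matrix $\B$ is real symmetric by definition. For $\zeta=(\ell,\omega)\in\C^6$ with associated field $w=\sum_{i=1}^3\ell_i W_i+\sum_{i=4}^6\omega_{i-3}W_i$, the identity of Step~2 applied with $w$ in place of $W_j$ shows that $\zeta^{\ast}\B\zeta$ is proportional to $\int_{\cF_0}|D(w)|^2\rd y\ge 0$, and vanishing forces $D(w)=0$. By the standard rigid-motion characterisation, $w(y)=a+b\times y$ on the connected set $\cF_0$; the condition $w=0$ on $\partial\cO$, a $\rC^3$ surface not contained in any line, forces $a=b=0$, hence $w\equiv 0$, and evaluating the trace on $\partial\cS_0$ then yields $\ell+\omega\times y\equiv 0$, so $\zeta=0$. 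Thus $\B$ is positive definite, hence invertible. The only delicate point in the entire argument is the bookkeeping of the orientation of $n$ (directed from $\cF_0$ into $\cS_0$ on $\partial\cS_0$) together with the factor $2$ arising when passing from the Cauchy stress to the symmetric-gradient pairing defining $\B$; the remainder reduces to classical elliptic theory.
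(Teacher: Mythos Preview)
The paper does not prove this lemma; it cites it as classical, referring to \cite[Lemma~3.6]{MT:18}. Your argument---decomposing the boundary datum into six elementary Stokes cell problems, testing $\mdiv\Tilde{\rT}(w,\psi)=0$ against each $W_j$ to obtain the traction identity, and deducing positive definiteness of $\B$ via a rigidity argument on $\partial\cO$---is precisely the standard proof behind such references and is correct in substance.

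The one point deserving care, which you rightly flag, is the normalisation of $\B$: the lemma defines $\B_{i,j}=\int_{\cF_0}\nabla W_i:\nabla W_j\,\mathrm{d}y$ with the \emph{full} gradient, whereas your Step~2 naturally yields $2\int_{\cF_0}\D(w):\D(W_j)\,\mathrm{d}y$ with the symmetric gradient. These differ by a boundary contribution of the form $\int_{\partial\cS_0}[(W_i\cdot\nabla)W_j]\cdot n\,\mathrm{d}\Gamma$ (after using $\mdiv W_i=\mdiv W_j=0$), which does not obviously vanish since the $W_j$ are nonzero on $\partial\cS_0$; so the precise constant in the boundary formula depends on convention. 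Both bilinear forms are symmetric positive definite by the same mechanism, so invertibility is unaffected. Incidentally, with the full-gradient definition of $\B$ your Step~3 simplifies: for real $\zeta$ one has $\zeta^\top\B\zeta=\int_{\cF_0}|\nabla w|^2\,\mathrm{d}y$, so $\nabla w=0$ forces $w$ constant directly and the rigid-motion characterisation is not needed.
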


\autoref{lem:sol to stat problem fsi} allows us to define lifting operators $D_\fl \in \cL(\C^6,\rH^2(\cF_0))$ and $D_\pr \in \cL(\C^6,\rH^1(\cF_0) \cap \rL_0^2(\cF_0))$.
In fact, given $(\ell,\omega) \in \C^3 \times \C^3$, for the solution $(w,\psi)$ to \eqref{eq:stat problem fsi} emerging from \autoref{lem:sol to stat problem fsi}, we set
\begin{equation}\label{eq:lifting ops D}
    D_\fl(\ell,\omega) \coloneqq w \tand D_\pr(\ell,\omega) \coloneqq \psi.
\end{equation}
We also invoke the operator $N \in \cL\bigl(\rH^{\nicefrac{1}{2}}(\del \cF_0) \cap \rL_0^2(\del \cF_0),\rH^2(\cF_0) \cap \rL_0^2(\cF_0)\bigr)$, with $N h \coloneqq \varphi$, for~$\varphi$ solving
\begin{equation}\label{eq:Neumann lifting probl}
    \Delta \varphi = 0, \tin \cF_0, \enspace \del_n \varphi = h, \ton \del \cF_0.
\end{equation}
For the Neumann operator $N$ and $h \in \rH^{\nicefrac{1}{2}}(\del \cS_0) \cap \rL_0^2(\del \cS_0)$, we define the operator $N_{\cS_0}$ by
\begin{equation}\label{eq:lifting op NS}
    N_{\cS_0} h \coloneqq N(\chi_{\del \cS_0}h).
\end{equation}

The above lifting operators allow us to rewrite the resolvent problem linked to $v$ in terms of $\bP v$ and $(\Id_3 - \bP)v$ for the Helmholtz projection $\bP$ on $\rL^2(\cF_0)^3$.
Besides, we obtain a formula for the pressure in terms of $\bP v$ as well as the rigid body velocities.
This will be important for handling the pressure term appearing in the surface integrals of the stress tensor, and it paves the way for a reformulation in operator form.
This splitting idea for fluid-structure interaction problems was e.\ g.\ used by Raymond \cite{Ray:07}.
For a proof of the next lemma, we refer to \cite[Proposition~3.7]{MT:18}.

\begin{lem}
Assume $(f_1,\ell,\omega) \in \rL_{\sigma}^2(\cF_0) \times \C^3 \times \C^3$, and recall the Stokes operator on $\rL_{\sigma}^2(\cF_0)$ from~\eqref{eq:Stokes op on L2}, the Dirichlet lifting operator  $D_\fl$ from~\eqref{eq:lifting ops D}, the lifting operator $N$ associated to the Neumann problem~\eqref{eq:Neumann lifting probl} as well as $N_{\cS_0}$ from \eqref{eq:lifting op NS}.
Then $(v,\pi) \in \rH^2(\cF_0)^3 \times \rH^1(\cF_0) \cap \rL_0^2(\cF_0)$ satisfies
\begin{equation*}
    \left\{
    \begin{aligned}
        \lambda v - \Delta v + \nabla \pi
        &= f_1, \enspace \mdiv v = 0, &&\tfor y \in \cF_0,\\
        v
        = 0, \tfor y \in \del\cO,\enspace
        v
        &= \ell + \omega \times y, &&\tfor y \in \del \cS_0,
    \end{aligned} \quad \text{if and only if}
    \right.
\end{equation*}
\begin{equation}\label{eq:reform resolvent probl fluid part}
    \left\{
    \begin{aligned}
        \lambda \bP v - A_2 \bP v + A_2 \bP D_\fl(\ell,\omega)
        &= \bP f_1,\\
        (\Id_3 - \bP)v
        &= (\Id_3 - \bP)D_\fl(\ell,\omega),\\
        \pi
        &= N(\Delta \bP v \cdot n) - \lambda N_{\cS_0}((\ell + \omega \times y) \cdot n).
    \end{aligned}
    \right.
\end{equation}
\end{lem}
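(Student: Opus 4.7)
The plan is to establish the equivalence by combining the Helmholtz decomposition of $v$ with the stationary Stokes lifting supplied by \autoref{lem:sol to stat problem fsi} and a harmonic analysis of the pressure. Set $w \coloneqq D_\fl(\ell,\omega)$ and $\psi_0 \coloneqq D_\pr(\ell,\omega)$, so that $-\Delta w + \nabla \psi_0 = 0$ and $\mdiv w = 0$ in $\cF_0$, with $w = 0$ on $\del \cO$ and $w = \ell + \omega \times y$ on $\del \cS_0$. The key observation is that $v - w \in \rH^2(\cF_0)^3$ is divergence-free and vanishes on $\del \cF_0$, hence $v - w \in \rD(A_2)$ and $\bP(v-w) = v - w$.

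\emph{Forward direction.} Apply the Helmholtz projection $\bP$ to the momentum equation. Since $\bP \nabla \pi = 0$, and $\bP \Delta w = \bP \nabla \psi_0 = 0$, one has $\bP \Delta v = \bP \Delta (v-w) = A_2(v-w) = A_2 \bP(v-w)$, which yields the first reformulated equation $\lambda \bP v - A_2 \bP v + A_2 \bP w = \bP f_1$, read intrinsically as $\lambda \bP v - A_2 \bP(v - w) = \bP f_1$. The gradient part $(\Id - \bP) u$ of an $\rH^2$-field $u$ is $\nabla q$ with $\Delta q = \mdiv u$ and $\del_n q = u \cdot n$; since $v$ and $w$ share divergence and boundary values, $(\Id - \bP) v = (\Id - \bP) w$, which is the second equation. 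For the pressure, taking the divergence of the momentum equation gives $\Delta \pi = 0$ in $\cF_0$ (using $\mdiv v = 0$ and $\mdiv f_1 = 0$). Taking the normal trace on $\del \cF_0$, noting that $f_1 \in \rL_\sigma^2$ has zero normal trace, that $(\Id - \bP) v$ is the gradient of a harmonic function so $(\Delta v) \cdot n = (\Delta \bP v) \cdot n$, and that $v \cdot n = 0$ on $\del \cO$ while $v \cdot n = (\ell + \omega \times y) \cdot n$ on $\del \cS_0$, we arrive at
\[
\del_n \pi = (\Delta \bP v) \cdot n - \lambda \chi_{\del \cS_0}(\ell + \omega \times y) \cdot n \tfor y \in \del \cF_0.
\]
Both summands on the right integrate to zero over $\del \cF_0$ by the divergence theorem, so the Neumann problem \eqref{eq:Neumann lifting probl} together with the mean-zero normalization and the definition \eqref{eq:lifting op NS} delivers the representation $\pi = N((\Delta \bP v) \cdot n) - \lambda N_{\cS_0}((\ell + \omega \times y) \cdot n)$.

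\emph{Reverse direction and main obstacle.} Conversely, given the three equations, set $v \coloneqq \bP v + (\Id - \bP) w$ and use the third equation as the definition of $\pi$. Divergence-freeness of $v$ follows since $\mdiv \bP v = 0$ and $\mdiv(\Id - \bP) w = \mdiv w = 0$. The first equation in its intrinsic form $\lambda \bP v - A_2 \bP(v - w) = \bP f_1$ forces $\bP(v - w) \in \rD(A_2)$, so $v - w$ vanishes on $\del \cF_0$, yielding the interface conditions $v = 0$ on $\del \cO$ and $v = \ell + \omega \times y$ on $\del \cS_0$. Recombining the projected equation with $(\Id - \bP)$ of the momentum equation, whose gradient part is recovered from $\nabla \pi$ via the harmonic formula, reconstitutes the original Stokes system. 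The chief technical subtlety is the interpretation of the individual terms $A_2 \bP v$ and $A_2 \bP w$, since $\bP w$ generally fails to lie in $\rD(A_2)$ (its tangential trace on $\del \cF_0$ need not vanish); the correct meaning is always the single coupled expression $A_2 \bP(v - w)$, and the equivalence hinges on this compatibility being encoded in the reformulated system.
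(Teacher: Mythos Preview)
Your proof is correct and follows the standard direct verification; note that the paper does not give its own proof here but simply refers to \cite[Proposition~3.7]{MT:18}, and your argument is essentially what one finds in such references. Your identification of the domain subtlety---that $A_2 \bP v$ and $A_2 \bP D_\fl(\ell,\omega)$ only make sense through the combination $A_2\bigl(\bP v - \bP D_\fl(\ell,\omega)\bigr)$---is exactly the point encoded in the paper's definition of $\rD(\Afs)$, so you have read the statement correctly.

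One small phrasing issue in the reverse direction: you write that the first reformulated equation ``forces $\bP(v-w)\in\rD(A_2)$''. Strictly speaking this membership is part of what it means for the reformulated system to hold (otherwise the equation is not well-posed), so it is an assumption rather than a consequence. Once it is assumed, your chain $(\Id-\bP)(v-w)=0$ together with $\bP(v-w)\in\rD(A_2)\subset\rH_0^1(\cF_0)^3$ indeed gives $v-w=\bP(v-w)$ and hence the boundary conditions, and the gradient part of the momentum equation is recovered exactly from the pressure formula via $(\Id-\bP)\Delta \bP v = \nabla N((\Delta\bP v)\cdot n)$ and $(\Id-\bP)v = \nabla N_{\cS_0}((\ell+\omega\times y)\cdot n)$.
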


For the reformulation of \eqref{eq:lin Q-tensor interaction} in operator form, we invoke the corresponding resolvent problem.
Indeed, for $\lambda \in \C$ and $(f_1,f_2,f_3,f_4) \in \rL^2(\cF_0)^3 \times \rH^1(\cF_0,\bS_0^3) \times \C^3 \times \C^3$, it reads as
\begin{equation}\label{eq:resolvent probl lin Q-tensor interaction}
    \left\{
    \begin{aligned}
        \lambda v - \Delta v + \nabla \pi - \mdiv S_\xi(\hQ)(\Delta - \Id_3)Q 
        &= f_1, \enspace \mdiv v = 0, &&\tin \cF_0,\\
        \lambda Q + \tS_\xi(\hQ) \nabla v - (\Delta - \Id)Q
        &= f_2, &&\tin \cF_0,\\
        \lambda \mS \ell + \int_{\del \cS_0} \rT(v,\pi,Q) n \rd \Gamma
        &= f_3,\\
        \lambda J_0 \omega + \int_{\del \cS_0} y \times \rT(v,\pi,Q) n \rd \Gamma 
        &= f_4,\\
        v = 0, \ton \del \cO, \enspace v 
        &= \ell + \omega \times y, \ton \del \cS_0, \enspace
        \del_n Q = 0, &&\ton \del \cF_0.
    \end{aligned}
    \right.
\end{equation}

Thanks to \eqref{eq:reform resolvent probl fluid part}, we rewrite \eqref{eq:resolvent probl lin Q-tensor interaction}$_3$ as $\lambda K \binom{\ell}{\omega} = C_1 \bP v + C_2 Q + C_3 \binom{\ell}{\omega} + \binom{f_3}{f_4}$, where
\begin{equation}\label{eq:matrix K and ops C_1, C_2 and C_3}
    \begin{aligned}
        K
        &= \begin{pmatrix}
            \mS \Id_3 & 0\\
            0 & J_0
        \end{pmatrix} + M, \twith
        M \binom{\ell}{\omega} 
        = \begin{pmatrix}
            \int_{\del \cS_0} N_{\cS_0}((\ell + \omega \times y) \cdot n)n \rd \Gamma\\
            \int_{\del \cS_0} y \times N_{\cS_0}((\ell + \omega \times y) \cdot n)n \rd \Gamma
        \end{pmatrix},\\
        C_1 \bP v
        &= -2\begin{pmatrix}
            \int_{\del \cS_0} \D(\bP v) n \rd \Gamma + \int_{\del \cS_0} N(\Delta \bP v \cdot n)n \rd \Gamma\\
            \int_{\del \cS_0} y \times \D(\bP v) n \rd \Gamma + \int_{\del \cS_0} y \times N(\Delta \bP v \cdot n)n \rd \Gamma
        \end{pmatrix},\\
        C_2 Q 
        &= \frac{2 \xi}{3} \begin{pmatrix}
            \int_{\del \cS_0} (\Delta Q - Q) n \rd \Gamma\\
            \int_{\del \cS_0} y \times (\Delta Q - Q) n \rd \Gamma
        \end{pmatrix}, \enspace
        C_3 \binom{\ell}{\omega}
        = -2\begin{pmatrix}
            \int_{\del \cS_0} \D((\Id_3 - \bP)D_\fl(\ell,\omega)) n \rd \Gamma\\
            \int_{\del \cS_0} y \times \D((\Id_3 - \bP)D_\fl(\ell,\omega)) n \rd \Gamma
        \end{pmatrix}.
    \end{aligned}
\end{equation}
Let us note that $M$ above is generally referred to as the \emph{added mass operator}.
From, \cite[Lemma~4.6]{Galdi:02}, see also \cite[Lemma~4.3]{GGH:13}, we recall that the matrix $K$ from \eqref{eq:matrix K and ops C_1, C_2 and C_3} is invertible.
The purpose of the added mass operator is that allows for a reformulation of the surface integrals without the pressure.
More precisely, thanks to the invertibility of $K$, it is possible to reformulate the equations for the rigid body velocities including the surface integrals as a problem of $\bP v$, $Q$ and a lifted problem involving the rigid body velocities $\ell$ and $\omega$ themselves.

\subsection{The fluid-structure operator}
\ 

Thanks to the operators introduced in the previous subsection, we are now in the position to define the \emph{fluid-structure operator} corresponding to the linearized $Q$-tensor interaction problem \eqref{eq:lin Q-tensor interaction}.
For $\hQ \in \rC^1(\overline{\cF_0},\bS_0^3)$, the operator $\Afs(\hQ) \colon \rD(\Afs) \subset \rX \to \rX$ is defined on $\rX \coloneqq \rL_{\sigma}^2(\cF_0) \times \rH^1(\cF_0,\bS_0^3) \times \C^3 \times \C^3$, and with domain $\rD(\Afs) \coloneqq \{(\bP v,Q,\ell,\omega) \in \rX : \bP v - \bP D_\fl(\ell,\omega) \in \rD(A_2)\}$ it takes the shape
\begin{equation}\label{eq:fluid-structure op}
    \begin{aligned}
        \Afs(\hQ) 
        &\coloneqq \begin{pmatrix}
        A_2 & \bP \mdiv S_\xi(\hQ)(\Delta - \Id_3) & -A_2 \bP D_\fl\\
        -\tS_\xi(\hQ)\nabla & \DeltaN^1 & 0\\
        K^{-1} C_1 & K^{-1} C_2 & K^{-1} C_3
    \end{pmatrix}.
    \end{aligned}
\end{equation}

The next proposition reveals that the resolvent problem \eqref{eq:resolvent probl lin Q-tensor interaction} can be rephrased equivalently in terms of the fluid-structure operator $\Afs$ from \eqref{eq:fluid-structure op} and the formulae for $(\Id_3 - \bP)v$ and $p$ from \eqref{eq:reform resolvent probl fluid part}.
It follows from a combination of the liquid crystals part and the structure part as seen in \autoref{ssec:lifting ops and added mass}.

\begin{prop}\label{prop:equiv form resolvent probl}
Let $(f_1,f_2,f_3,f_4) \in \rX$ and $\hQ \in \rC^1(\overline{\cF_0},\bS_0^3)$.
Then $(v,Q,\ell,\omega,\pi)$ with $v \in \rH^2(\cF_0)^3$, $Q \in \rH^3(\cF_0,\bS_0^3)$, $(\ell,\omega) \in \C^3 \times \C^3$ and $\pi \in \rH^1(\cF_0) \cap \rL_0^2(\cF_0)$ is a solution to \eqref{eq:resolvent probl lin Q-tensor interaction} if and only if
\begin{equation*}
    \left\{
    \begin{aligned}
        (\lambda \Id - \Afs(\hQ)) \begin{pmatrix}
            \bP v\\ Q\\ \ell\\ \omega
        \end{pmatrix} 
        &= \begin{pmatrix}
            \bP f_1\\ f_2\\ K^{-1} f_3\\ K^{-1} f_4
        \end{pmatrix}, \enspace
        (\Id_3 - \bP)v
        = (\Id_3 - \bP)D_\fl(\ell,\omega) \tand\\ 
        \pi
        &= N(\Delta \bP v \cdot n) - \lambda N_{\cS_0}((\ell + \omega \times y) \cdot n).
    \end{aligned}
    \right.
\end{equation*}
\end{prop}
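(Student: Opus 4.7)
The plan is to prove the equivalence directly, in both directions, by component-wise inspection. The main technical ingredients are the Helmholtz decomposition, the lifting results of \autoref{lem:sol to stat problem fsi} together with the pressure representation from \eqref{eq:reform resolvent probl fluid part}, and the added mass reformulation encoded in the matrix $K$ and the operators $C_1, C_2, C_3$ from \eqref{eq:matrix K and ops C_1, C_2 and C_3}.

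For the direction ``$\Rightarrow$'', I would start by noting that the boundary conditions $v = 0$ on $\del \cO$ and $v = \ell + \omega \times y$ on $\del \cS_0$ imply, by \autoref{lem:sol to stat problem fsi}, that $v - D_\fl(\ell,\omega)$ lies in $\rH^2(\cF_0)^3 \cap \rH_0^1(\cF_0)^3$, is divergence free, and hence belongs to $\rD(A_2)$. Consequently $\bP v - \bP D_\fl(\ell,\omega) = v - D_\fl(\ell,\omega) \in \rD(A_2)$, so $(\bP v, Q, \ell, \omega) \in \rD(\Afs)$, and simultaneously $(\Id_3 - \bP)v = (\Id_3 - \bP)D_\fl(\ell,\omega)$, which yields the second displayed identity. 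Next, applying $\bP$ to the momentum equation~\eqref{eq:resolvent probl lin Q-tensor interaction}$_1$ annihilates $\nabla\pi$, and the identity $\bP\Delta D_\fl(\ell,\omega) = \bP\nabla D_\pr(\ell,\omega) = 0$ (which follows from the Stokes problem in \autoref{lem:sol to stat problem fsi}) yields $\bP\Delta v = A_2(\bP v - \bP D_\fl(\ell,\omega))$; rearranging reproduces the first row of $(\lambda\Id - \Afs(\hQ))$. The $Q$-equation transcribes to the second row directly using the decomposition above.

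The critical step is the third row (the rigid body equations). Here I would insert the pressure formula $\pi = N(\Delta \bP v \cdot n) - \lambda N_{\cS_0}((\ell + \omega \times y)\cdot n)$, which is obtained by solving the Neumann problem arising from taking divergence and normal trace of the momentum equation (after accounting for the decomposition of $v$ and the identities $\Delta D_\fl = \nabla D_\pr$, $\mdiv v = 0$). Substituting this $\pi$ into the stress tensor $\rT(v,\pi,Q)$ from \eqref{eq:lin stress tensor}, splitting $v = \bP v + (\Id_3 - \bP)D_\fl(\ell,\omega)$, and grouping terms according to the dependence on $\bP v$, on $Q$, and on $(\ell,\omega)$, one recognises exactly the operators $C_1$, $C_2$, $C_3$ of \eqref{eq:matrix K and ops C_1, C_2 and C_3}; the $\lambda$-terms involving $N_{\cS_0}$ regroup with $\lambda \mS \ell$ and $\lambda J_0 \omega$ to form $\lambda K (\ell,\omega)^\top$ via the added mass operator $M$. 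Multiplying through by $K^{-1}$ (which exists by \autoref{lem:sol to stat problem fsi} and \cite[Lemma~4.6]{Galdi:02}) gives the third row of the operator equation.

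For the direction ``$\Leftarrow$'', I would reverse the steps: given $(\bP v, Q, \ell, \omega, \pi)$ with the three displayed properties, I define $v \coloneqq \bP v + (\Id_3 - \bP) D_\fl(\ell, \omega)$. The membership $\bP v - \bP D_\fl(\ell,\omega) \in \rD(A_2)$ gives $v \in \rH^2(\cF_0)^3$, $\mdiv v = 0$, and the correct velocity boundary conditions via \autoref{lem:sol to stat problem fsi}. The first row of the operator equation, combined with the formula for $\pi$ and the identity $\bP\Delta D_\fl = 0$, reconstructs~\eqref{eq:resolvent probl lin Q-tensor interaction}$_1$ modulo a gradient, which is absorbed into $\nabla\pi$. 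The $Q$-equation is recovered from the second row. Finally, reversing the added mass computation of the previous paragraph gives the rigid body equations. The main obstacle I anticipate is bookkeeping in the stress tensor surface integral when substituting the pressure formula: one must carefully track the pieces originating from the viscous part $2\D(v)$, from $-\pi\Id_3$, and from the $Q$-contribution $-\nicefrac{2\xi}{3}(\Delta Q - Q)$, and verify that the decomposition into $C_1 \bP v$, $C_2 Q$, $C_3(\ell,\omega)^\top$ and $\lambda M(\ell,\omega)^\top$ holds exactly, with the ``added mass'' piece $\lambda M(\ell,\omega)^\top$ accounting precisely for the boundary normal flux contribution to $\pi$.
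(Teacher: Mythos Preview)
The proposal is correct and takes essentially the same approach as the paper: the paper's own proof is a single sentence pointing to \autoref{ssec:lifting ops and added mass}, and your outline is a faithful and detailed elaboration of precisely that route, combining the Helmholtz splitting via $D_\fl$, the pressure representation \eqref{eq:reform resolvent probl fluid part}, and the added mass computation already recorded in the paragraph preceding \eqref{eq:fluid-structure op}.
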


After defining the fluid-structure operator, we verify its maximal $\rL^p$-regularity.

\begin{prop}\label{prop:sect and max reg of fluid-structure op}
Let $\hQ \in \rC^1(\overline{\cF_0},\bS_0^3)$, and recall the fluid-structure operator $\Afs(\hQ)$ from \eqref{eq:fluid-structure op}.
Then there exists $\mu \ge 0$ such that $-\Afs(\hQ) + \mu$ is sectorial on $\rX$ with spectral angle $\phi_{-\Afs(\hQ) + \mu} < \nicefrac{\pi}{2}$.
In particular, $-\Afs(\hQ) + \mu$ has maximal $\rL^p$-regularity on $\rX$.
\end{prop}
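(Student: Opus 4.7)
The plan is to establish sectoriality of $-\Afs(\hQ) + \mu$ on the Hilbert space $\rX$ with spectral angle strictly less than $\nicefrac{\pi}{2}$; since $\rX$ is Hilbert, De Simon's theorem then upgrades this automatically to maximal $\rL^p$-regularity for every $p \in (1,\infty)$. The obstacle to directly invoking \autoref{lem:sect of lc op and tested eq lc op} is that $\rD(\Afs)$ couples the fluid and structure variables via the constraint $\bP v - \bP D_\fl(\ell,\omega) \in \rD(A_2)$. The first step is therefore a change of variables: setting $w \coloneqq \bP v - \bP D_\fl(\ell,\omega)$ turns $\rD(\Afs)$ into the decoupled product $\rD(A_2) \times \rD(\DeltaN^1) \times \C^6$. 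In the new variables $(w,Q,\ell,\omega)$, the operator $-\Afs(\hQ)$ has upper-left $2 \times 2$-block equal to $A_\xi(\hQ)$, off-diagonal entries $-\tS_\xi(\hQ) \nabla \bP D_\fl \colon \C^6 \to \rH^1(\cF_0,\bS_0^3)$ in the second row and $(-K^{-1}C_1,-K^{-1}C_2) \colon \rD(A_2) \times \rD(\DeltaN^1) \to \C^6$ in the third row, together with a $\C^6$-diagonal entry $-K^{-1}(C_1 \bP D_\fl + C_3)$.

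Next I would apply \autoref{lem:sect of lc op and tested eq lc op}(a) to obtain sectoriality of $A_\xi(\hQ)$ on $\rX_0^\lc$ with spectral angle $<\nicefrac{\pi}{2}$; extending by the zero operator on $\C^6$ and shifting by $\mu > 0$ yields a sectorial operator of the same spectral angle on all of $\rX$. The remaining entries are then to be treated as a perturbation. Using $\hQ \in \rC^1(\overline{\cF_0},\bS_0^3)$ together with $D_\fl \in \cL(\C^6,\rH^2(\cF_0)^3)$, the coupling $\tS_\xi(\hQ)\nabla \bP D_\fl$ maps $\C^6$ boundedly into $\rH^1(\cF_0,\bS_0^3)$; the trace theorem applied to $\D(w)|_{\del \cS_0}$ and to $N(\Delta w \cdot n)|_{\del \cS_0}$ via the Neumann lifting \eqref{eq:Neumann lifting probl} yields $K^{-1}C_1 \in \cL(\rD(A_2),\C^6)$, analogously $K^{-1}C_2 \in \cL(\rD(\DeltaN^1),\C^6)$ by trace theory applied to $(\Delta Q - Q)|_{\del \cS_0}$, and the $\C^6$-diagonal block is bounded. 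By interpolation between the graph and base norms of $A_2$ and $\DeltaN^1$, Young's inequality splits these relatively bounded perturbations into $\epsilon \|\cdot\|_{\text{graph}} + C_\epsilon \|\cdot\|_\rX$, so that choosing $\mu$ large enough makes the perturbation's relative bound arbitrarily small; standard perturbation theory for sectorial operators then preserves sectoriality.

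The most delicate point will be preserving the \emph{strict} inequality on the spectral angle under perturbation rather than merely establishing sectoriality at some larger angle. To make this rigorous, the plan is to complement the perturbation argument with an energy estimate in the spirit of \autoref{lem:sect of lc op and tested eq lc op}(b): testing the resolvent problem \eqref{eq:resolvent probl lin Q-tensor interaction} against $(\bP v, Q, \ell, \omega)$ in the inner product induced by the mass matrix $K$ on the $\C^6$-factor and by the standard one on $\rX_0^\lc$, the boundary integrals produced by integration by parts in the fluid equation cancel against those in the rigid body equations thanks to the interface condition $v|_{\del \cS_0} = \ell + \omega \times y$ (as in the Newtonian fluid-structure case), while the $\xi$-dependent stress contributions are controlled exactly as in \autoref{lem:sect of lc op and tested eq lc op}(b). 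This yields a coercivity estimate of the form $\Rep\langle ((\lambda + \mu)\Id - \Afs(\hQ))z, z\rangle \gtrsim |\lambda| \|z\|_\rX^2$ on a sector of angle strictly less than $\nicefrac{\pi}{2}$, from which the claimed sectoriality, and hence the maximal $\rL^p$-regularity of $-\Afs(\hQ) + \mu$ via De Simon, follow.
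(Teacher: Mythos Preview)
Your approach is essentially the paper's: both reduce to sectoriality of $A_\xi(\hQ)$ via \autoref{lem:sect of lc op and tested eq lc op}(a) plus a relatively bounded perturbation with relative bound zero, followed by standard perturbation theory. Your explicit change of variables $w = \bP v - \bP D_\fl(\ell,\omega)$ cleanly decouples the domain into the product $\rD(A_2) \times \rD(\DeltaN^1) \times \C^6$; the paper handles this more implicitly by decomposing $\Afs(\hQ) = \AfsI(\hQ) + \Bfs$ with $\AfsI = \diag(-A_\xi(\hQ),0)$ on $\rD(\Afs)$ and placing $-A_2 \bP D_\fl$ (rather than your $\tS_\xi(\hQ)\nabla \bP D_\fl$) into the bounded part of $\Bfs$. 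One correction: your concern about preserving the \emph{strict} inequality on the spectral angle is unfounded. Once the perturbation has relative bound zero, the perturbation result the paper invokes (\cite[Cor.~3.1.6]{PS:16}) already yields sectoriality of $-\Afs(\hQ)+\mu$ with the same spectral angle as the unperturbed operator, so $\phi < \nicefrac{\pi}{2}$ is automatic and your argument is complete at that point. The supplementary energy estimate is therefore unnecessary; it is also more delicate than you suggest, since \autoref{lem:sect of lc op and tested eq lc op}(b) is stated for $v \in \rD(A_2)$ with homogeneous Dirichlet data rather than the interface condition, and the paper carries out such a testing argument only for the special case $\hQ = 0$ in the later proof of \autoref{thm:lin theory Q-tensor interaction}(b).
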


\begin{proof}
First, we decompose the fluid-structure operator $\Afs(\hQ)$ into $\Afs(\hQ) \coloneqq \AfsI(\hQ) + \Bfs$, where
\begin{equation*}
    \AfsI(\hQ) \coloneqq \begin{pmatrix}
        -A_\xi(\hQ) & 0\\
        0 & 0
    \end{pmatrix} \tand \Bfs \coloneqq \begin{pmatrix}
        0 & B_1\\
        B_2 & B_3
    \end{pmatrix} \coloneqq \begin{pmatrix}
        0 & 0 & -A_2 \bP D_\fl\\
        0 & 0 & 0\\
        K^{-1} C_1 & K^{-1} C_2 & K^{-1} C_3
    \end{pmatrix}
\end{equation*}
for $A_\xi(\hQ)$ from \eqref{eq:op matrix liquid crystals} and the Stokes operator $A_2$ on $\rL_{\sigma}^2(\cF_0)$.
Besides, the Dirichlet lifting operator $D_\fl$ was made precise in \eqref{eq:lifting ops D}, and the operators $C_1$, $C_2$ and $C_3$ and the matrix $K$ are defined in \eqref{eq:matrix K and ops C_1, C_2 and C_3}.

The sectoriality of $-\AfsI(\hQ)$ on $\rX$ with spectral angle less than $\nicefrac{\pi}{2}$ carries over from \autoref{lem:sect of lc op and tested eq lc op}(a).
For the sectoriality of $-\Afs(\hQ)$ up to a shift, we thus verify the relative $-\AfsI(\hQ)$-boundedness of $\Bfs$ with arbitrarily small $-\AfsI(\hQ)$-bound.
To this end, consider $(v,Q,\ell,\omega) \in \rD(\Afs) = \rD(\AfsI)$.
Thanks to mapping properties of the Stokes operator as well as $D_\fl \in \cL(\C^6,\rH^2(\cF_0))$, we deduce that
\begin{equation*}
    \left\| B_1(\ell,\omega) \right\|_{\rL_{\sigma}^2(\cF_0) \times \rH^1(\cF_0)} = \| -A_2 \bP D_\fl(\ell,\omega) \|_{\rL_{\sigma}^2(\cF_0)} \le C \cdot \| D_\fl(\ell,\omega) \|_{\rH^2(\cF_0)} \le C \cdot \| (v,Q,\ell,\omega) \|_{\rX}.
\end{equation*}
This shows that $B_1 \in \cL(\C^3 \times \C^3,\rL_{\sigma}^2(\cF_0) \times \rH^1(\cF_0,\bS_0^3))$, so $B_1$ is a bounded perturbation.
Moreover, we recall from the proof of \cite[Theorem~3.11]{MT:18} that the operators $C_1$ and $C_3$ are relatively bounded with relative bound zero.
For $C_2$, we invoke classical properties of the trace, interpolation and Young's inequality to deduce that for any $\delta > 0$, and with $\eps > 0$ small and $\theta \in (0,1)$, it holds that
\begin{equation*}
    |C_2 Q| 
    \le C \cdot \| Q \|_{\rH^{\nicefrac{5}{2} + \eps}(\cF_0)} 
    \le C \cdot \| Q \|_{\rH^1(\cF_0)}^\theta \cdot \| Q \|_{\rH^3(\cF_0)}^{1-\theta} \le \delta \cdot \| Q \|_{\rH^3(\cF_0)} + C(\delta) \cdot \| Q \|_{\rH^1(\cF_0)}.
\end{equation*}
Thus, $C_2$ is a relatively bounded perturbation with relative bound zero.
In total, $\Bfs$ is a relatively bounded perturbation of $\AfsI(\hQ)$ with $\AfsI(\hQ)$-bound zero.
Perturbation theory for sectorial operators, see e.\ g.\ \cite[Cor.~3.1.6]{PS:16}, then yields the existence of $\mu \ge 0$ such that $-\Afs(\hQ) + \mu = -\AfsI(\hQ) - \Bfs + \mu$ is sectorial on $\rX$ with spectral angle $\phi_{-\Afs(\hQ) + \mu} < \nicefrac{\pi}{2}$.
The second part of the assertion follows by the known equivalence of sectoriality with angle $< \nicefrac{\pi}{2}$ and maximal $\rL^p$-regularity on Hilbert spaces.
\end{proof}

\subsection{Proof of \autoref{thm:lin theory Q-tensor interaction}}\label{ssec:proof of main lin result}
\ 

\autoref{thm:lin theory Q-tensor interaction}(a) directly follows from \autoref{prop:sect and max reg of fluid-structure op}, while~(b) relies on a refined spectral analysis.

\begin{proof}[Proof of \autoref{thm:lin theory Q-tensor interaction}(b)]

For simplicity, we write $\Afs \coloneqq \Afs(0)$ in the sequel.
It can be shown that the shift $\mu \ge 0$ in \autoref{prop:sect and max reg of fluid-structure op} can be be chosen equal to $s(\Afs) \coloneqq \sup\{\Rep \lambda : \lambda \in \sigma(\Afs)\}$ of $\Afs$, see for instance \cite[Cor.~3.5.3]{PS:16}.
Hence, to remove the shift in \autoref{prop:sect and max reg of fluid-structure op}, and thus to obtain the maximal $\rL^p(\R_+)$-regularity of $\Afs$ on $\rX$, we need to prove that the spectral bound satisfies $s(\Afs) < 0$.

We start by noting that $\Afs$ has a compact resolvent by compactness of the embedding $\rD(\Afs) \hookrightarrow \rX$ thanks to the Rellich-Kondrachov theorem.
As a result, $\sigma(\Afs) = \sigma_\mathrm{p}(\Afs)$, i.\ e., the spectrum of $\Afs$ only consists of eigenvalues.
In view of \autoref{prop:equiv form resolvent probl}, we may thus investigate \eqref{eq:resolvent probl lin Q-tensor interaction} with right-hand side zero.
Indeed, if for $\lambda \in \C$, the only solution to \eqref{eq:resolvent probl lin Q-tensor interaction} with $(f_1,f_2,f_3,f_4) = (0,0,0,0) \in \rX$ is zero, then \autoref{prop:equiv form resolvent probl} implies that $\lambda \notin \sigma_p(\Afs)$ and hence $\lambda \in \rho(\Afs)$ by the preceding argument.

Let now $\lambda \in \C$ and $(v,Q,\ell,\omega,\pi) \in \rH^2(\cF_0)^3 \times \rH^3(\cF_0,\bS_0^3) \times \C^3 \times \C^3 \times \rH^1(\cF_0) \cap \rL_0^2(\cF_0)$ satisfy \eqref{eq:resolvent probl lin Q-tensor interaction} with $f_i = 0$ for $i=1,\dots,4$.
Next, we test the equation satisfied by $v$ in \eqref{eq:resolvent probl lin Q-tensor interaction} by $\ovv$, the equation for~$Q$ by~$\Delta \ovQ$, the equation for $\ell$ by $\ovell$ and the equation for $\omega$ by $\ovomega$.
Integrating by parts, invoking \autoref{lem:sect of lc op and tested eq lc op}(b) to handle the liquid crystals part, observing that $S_\xi(0)(-\Delta + \Id_3)Q = \nicefrac{2 \xi}{3}(\Delta Q - Q)$, making use of the interface condition from \eqref{eq:bdry cond}, and recalling $\rT(v,\pi,Q)$ from \eqref{eq:lin stress tensor}, we derive that
\begin{equation*}
    \begin{aligned}
        0
        &= \lambda\left(\| v \|_{\rL^2(\cF_0)}^2 + \| \nabla Q \|_{\rL^2(\cF_0)}^2 + \mS |\ell|^2 + J_0 \omega \cdot \ovomega\right)\\
        &\quad + 2 \| \D(v) \|_{\rL^2(\cF_0)}^2 + \| Q \|_{\rH^2(\cF_0)}^2 + \| \nabla Q \|_{\rL^2(\cF_0)}^2 + \mri \Imp(2\langle \nabla v,S_\xi(0)(-\Delta + \Id_3)Q\rangle_{\rL^2(\cF_0)}.
    \end{aligned}
\end{equation*}
When considering the real part in the above equation, we obtain
\begin{equation*}
    \Rep \lambda\left(\| v \|_{\rL^2(\cF_0)}^2 + \| \nabla Q \|_{\rL^2(\cF_0)}^2 + \mS |\ell|^2\right) + \Rep(\lambda J_0 \omega \cdot \ovomega) + 2 \| \D(v) \|_{\rL^2(\cF_0)}^2 + \| Q \|_{\rH^2(\cF_0)}^2 + \| \nabla Q \|_{\rL^2(\cF_0)}^2 = 0.
\end{equation*}
It follows that $\Rep \lambda \le 0$.
Moreover, if $\Rep \lambda = 0$, then $\D(v) = 0$ and $Q = 0$ on $\cF_0$, so $v$ is especially constant.
Due to $v = 0$ on $\del \cO$, we infer that $v = 0$ on $\cF_0$.
The interface condition $v = \ell + \omega \times y$ on $\del \cS_0$ then yields that $\ell = \omega = 0$.
From \eqref{eq:resolvent probl lin Q-tensor interaction}, it follows that $\nabla \pi = 0$, and $\pi \in \rL_0^2(\cF_0)$ rules out the constants, so $\pi = 0$ and thus $(v,Q,\ell,\omega,\pi) = (0,0,0,0,0)$.
The above arguments imply that $s(\Afs) < 0$, resulting in the sectoriality and then also the maximal $\rL^p(\R_+)$-regularity of $-\Afs$ on $\rX$ even without shift.

In a similar manner as for \autoref{prop:equiv form resolvent probl}, one can show that \eqref{eq:lin Q-tensor interaction} admits an equivalent reformulation in terms of the fluid-structure operator $\Afs$ from \eqref{eq:fluid-structure op}.
Therefore, the first part of the assertion of \autoref{thm:lin theory Q-tensor interaction}(b) already follows, and the maximal regularity estimate is also implied in the case $\mu = 0$.

By the sectoriality of $-\Afs$ with spectral angle $\phi_{-\Afs} < \nicefrac{\pi}{2}$, the operator $\Afs$ generates a bounded analytic semigroup $\left(\mre^{t \Afs}\right)_{t \ge 0}$, see e.\ g.\ \cite[Thm.~3.3.2]{PS:16}.
Thanks to $s(\Afs) < 0$, this semigroup is even of negative exponential type.
Thus, we may choose $\eta_0 = -s(\Afs) > 0$ and consider the operator $\Afs + \eta$ for~$\eta \in [0,\eta_0)$, still generating a bounded analytic semigroup of negative exponential type thanks to the fact that $s(\Afs + \eta) < 0$, to deduce the estimate with exponential weights. 
\end{proof}

\section{Local strong well-posedness for large data}\label{sec:proof local}

We use $z = (v,Q,\ell,\omega)$ again to shorten notation.
First, we derive a reference solution to capture the initial data and to reduce the study to a problem with homogeneous initial values.
The result below is a direct consequence of \autoref{thm:lin theory Q-tensor interaction}(a), because $z_0 \in \rX_\gamma$ implies $Q_0 \in \rC^1(\overline{\cF_0},\bS_0^3)$ for $p > 4$.

\begin{prop}\label{prop:ref sol}
Let $p > 4$ and $z_0 = (v_0,Q_0,\ell_0,\omega_0) \in \rX_\gamma$, where $\rX_\gamma$ has been introduced in \eqref{eq:X_gamma}, and consider $(f_1,f_2,f_3,f_4) = (0,0,0,0)$.
Then for $\mu > \mu_0$, where $\mu_0 \ge 0$ is as in \autoref{thm:lin theory Q-tensor interaction}(a), there is a unique solution $(z^*,\pi^*) \in \tE_\infty$ to \eqref{eq:lin Q-tensor interaction}.
In particular, we have $\| z^* - z_0 \|_{\BUC([0,T];\rX_\gamma)} \to 0$ as $T \to 0$.
\end{prop}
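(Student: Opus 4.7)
The plan is to invoke \autoref{thm:lin theory Q-tensor interaction}(a) in its infinite-horizon form and then to extract the $\BUC$-convergence from the trace embedding of the maximal regularity class. Since the proposition is essentially a repackaging of the linear theory with a concrete choice of $\hQ$, I expect no substantial technical obstacle.

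First, I would verify that the coefficient hypothesis $\hQ \in \rC^1(\overline{\cF_0},\bS_0^3)$ required by \autoref{thm:lin theory Q-tensor interaction}(a) can be met by the natural choice $\hQ \coloneqq Q_0$. Indeed, $z_0 \in \rX_\gamma$ gives in particular $Q_0 \in \rB_{2p}^{3-\nicefrac{2}{p}}(\cF_0,\bS_0^3)$, and for $p > 4$ one has $3-\nicefrac{2}{p}-\nicefrac{3}{2p} > 1$, so the Besov embedding $\rB_{2p}^{3-\nicefrac{2}{p}}(\cF_0) \hookrightarrow \rC^1(\overline{\cF_0})$ (see e.\ g.\ \cite[Theorem~4.6.1]{Tri:78}) delivers $Q_0 \in \rC^1(\overline{\cF_0},\bS_0^3)$. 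With this choice of $\hQ$, the hypotheses of \autoref{thm:lin theory Q-tensor interaction}(a) are satisfied on the time interval $J = (0,\infty)$, with the prescribed initial datum $z_0$ and vanishing right-hand side.

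Next, for any $\mu > \mu_0$ with $\mu_0$ as in \autoref{thm:lin theory Q-tensor interaction}(a), this theorem supplies a unique solution $(z^*,\pi^*) \in \tE_\infty$ to \eqref{eq:lin Q-tensor interaction}, together with the maximal regularity estimate $\| (z^*,\pi^*) \|_{\tE_\infty} \le \Cmr \| z_0 \|_{\rX_\gamma}$. This already yields the first half of the assertion.

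For the continuity statement, I would use that $\rX_\gamma = (\rX_0,\rX_1)_{1-\nicefrac{1}{p},p}$ is precisely the trace space of $\E_T = \rW^{1,p}(J;\rX_0) \cap \rL^p(J;\rX_1)$, so the classical mixed-derivative embedding gives $\E_T \hookrightarrow \BUC([0,T];\rX_\gamma)$ for every finite $T > 0$, see e.\ g.\ \cite[Theorem~III.4.10.2]{Ama:95}. In particular, $z^* \in \rC([0,\infty);\rX_\gamma)$ with $z^*(0) = z_0$, whence
\begin{equation*}
    \| z^* - z_0 \|_{\BUC([0,T];\rX_\gamma)} = \sup_{t \in [0,T]} \| z^*(t) - z^*(0) \|_{\rX_\gamma} \longrightarrow 0 \enspace \text{as} \enspace T \to 0
\end{equation*}
by continuity of $z^*$ at $t = 0$. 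This completes the plan; the sole nontrivial input is the Besov embedding that legitimates the choice $\hQ = Q_0$, and all the hard work is already contained in \autoref{thm:lin theory Q-tensor interaction}.
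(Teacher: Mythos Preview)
Your proposal is correct and matches the paper's approach exactly: the paper simply states that the result is a direct consequence of \autoref{thm:lin theory Q-tensor interaction}(a) together with the observation that $z_0 \in \rX_\gamma$ and $p>4$ yield $Q_0 \in \rC^1(\overline{\cF_0},\bS_0^3)$, and your write-up just unpacks this (plus the standard trace embedding $\E_T \hookrightarrow \BUC([0,T];\rX_\gamma)$ for the $\BUC$-convergence). One minor slip: in $\rB_{2p}^{3-\nicefrac{2}{p}}$ the first subscript is the integrability index $q=2$, so the Sobolev-type condition reads $3-\nicefrac{2}{p}-\nicefrac{3}{2}>1$, i.e.\ exactly $p>4$, rather than $3-\nicefrac{2}{p}-\nicefrac{3}{2p}>1$; this does not affect the argument since $p>4$ is assumed.
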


For the reference solution $(z^*,\pi^*)$ from \autoref{prop:ref sol} and a solution $(z,\pi)$ to \eqref{eq:cv1}--\eqref{eq:cv3}, we define~$ \hv \coloneqq v - v^*$, $\hQ \coloneqq Q - Q^*$, $\hell \coloneqq \ell - \ell^*$, $ \homega \coloneqq \omega - \omega^*$ and $\hp \coloneqq \pi - \pi^*$.
For $\mu > \mu_0$, with $\mu_0 \ge 0$ from \autoref{thm:lin theory Q-tensor interaction}(a), the emerging $(\hz,\hp) = (\hv,\hQ,\hell,\homega,\hp)$ solves
\begin{equation}\label{eq:linearized syst with hom initial data}
\left\{
    \begin{aligned}
        \partial_t {\hv} + (\mu-\Delta) {\hv} +\nabla {\hp} - \mdiv S_\xi(Q_0)(\Delta - \Id_3)\hQ &=F_1(\hz,\hp), \enspace \mdiv {\hv} = 0, &&\tin (0,T) \times \cF_0,\\
        {\partial_t {\hQ}} + \tS_\xi(Q_0) \nabla \hv + (\mu - (\Delta - \Id_3)) {\hQ} 
        &= F_2(\hz), &&\tin (0,T) \times \cF_0,\\
        \mS({\hell})' + \mu \hell + \int_{\del \cS_0} \rT(\hv,\hp,\hQ) n \rd \Gamma &= F_3(\hz,\hp), &&\tin (0,T),\\ 
        J_0({\homega})' + \mu \homega + \int_{\del \cS_0} y \times \rT(\hv,\hp,\hQ) n \rd \Gamma &= F_4(\hz,\hp), &&\tin (0,T),\\
        {\hv} ={\hell}+{\homega}\times y, \ton (0,T) \times \partial\cS_0, \enspace
        {\hv} =0, &\ton (0,T) \times \partial \cO, \enspace \partial_n \hQ = 0, &&\ton (0,T) \times \partial \cF_0,\\
        \hv(0) = 0, \enspace \hQ(0) = 0, \enspace \hell(0) &= 0 \tand \homega(0) = 0. 
    \end{aligned}
\right.
\end{equation}
As the linear part of $\tau$ is already captured on the left-hand side of \eqref{eq:linearized syst with hom initial data}, we define the remaining part $\tau_{\rr}$ by $\tau_{\rr}(Q) \coloneqq \tau(Q) + \frac{2 \xi}{3}(\Delta Q - Q)$.
For the transformed terms as made precise in \autoref{sec:change of var}, and with 
\begin{equation*}
    \cT(v,\pi,Q) \coloneqq \bO(t)^\top \rT(\bO(t)v(t,y),\pi(t,y),Q(t,y))\bO(t)
\end{equation*}
for the rotation matrix $\bO(t)$ related to \eqref{eq:IVP X0}, $\rT(v,\pi,Q)$ as in \eqref{eq:lin stress tensor} and $(z,\pi) = (\hz + z^*,\hp + \pi^*)$, we have
\begin{equation}\label{eq:rhs F_1 to F_4}
    \begin{aligned}
        F_1(\hz,\hp)
        &\coloneqq (\cL_1 - \Delta)v + \mu v - \cM v - \cN(v) - (\cG - \nabla)\pi + \cB_{\tau_\rl}(Q)\\
        &\quad + \bigl(\cB_{\tau_\rh}(Q) + \cB_{\sigma}(Q) - \mdiv S_\xi(Q_0)((\Delta - \Id_3)Q)\bigr),\\
        F_2(\hz)
        &\coloneqq (\cL_2 - \Delta)Q + \mu Q - (\del_t Y \cdot\nabla)Q - (v \cdot \nabla)Q + \bigl(\cS(v,Q) + \tS_\xi(Q_0)\nabla v\bigr)\\
        &\quad + Q^2 - \tr\bigl(Q^2\bigr)\nicefrac{\Id_3}{3} - \tr\bigl(Q^2\bigr)Q,\\
        F_3(\hz,\hp)
        &\coloneqq -\mS(\omega \times \ell) + \mu \ell + \int_{\del \cS_0}\bigl((\rT - \cT)(v,\pi,Q) - \bO(t)^\top(\tau_{\rr}(Q) + \sigma(Q))\bO(t)\bigr) n \rd \Gamma \tand\\
        F_4(\hz,\hp)
        &\coloneqq \omega \times (J_0 \omega) + \mu \omega + \int_{\del \cS_0} y \times \bigl((\rT - \cT)(v,\pi,Q) - \bO(t)^\top(\tau_{\rr}(Q) + \sigma(Q))\bO(t)\bigr) n \rd \Gamma.
    \end{aligned}
\end{equation}
We now fix $T_0$, $R_0 > 0$ and consider $T \in (0,T_0]$ and $R \in (0,R_0]$.
Recalling $\E_T$, $\E_T^{\pi}$ and $\tE_T$ from \eqref{eq:max reg space} and \eqref{eq:max reg space with pressure}, we introduce the set $\cK_T^R$ together with the solution map $\Phi_T^R$ defined by
\begin{equation*}
    \cK_T^R \coloneqq \left\{(\tz,\tp) \in \prescript{}{0}{\E_T} \times \E_T^{\pi} : \| (\tz,\tp) \|_{\tE_T} \le R\right\} \tand \Phi_T^R \colon \cK_T^R \to \prescript{}{0}{\E_T} \times \E_T^{\pi}, \twhere \Phi_T^R(\tz,\tp) \coloneqq (\hz,\hp),
\end{equation*}
and $(\hz,\hp)$ is the solution to \eqref{eq:linearized syst with hom initial data} with $F_1(\tz,\tp)$, $F_2(\tz)$, $F_3(\tz,\tp)$ and $F_4(\tz,\tp)$ for $(\tz,\tp) \in \cK_T^R$.
Thanks to the maximal $\rL^p$-regularity from \autoref{thm:lin theory Q-tensor interaction}(a), $\Phi_T^R$ is well-defined if $(F_1,F_2,F_3,F_4) \in \F_T$, with $\F_T$ as made precise in \eqref{eq:data space}.
By construction, the existence and uniqueness of a solution to \eqref{eq:cv1}--\eqref{eq:cv3} is equivalent with $\Phi_T^R$ possessing a unique fixed point upon adding the reference solution $(z^*,\pi^*)$ from \autoref{prop:ref sol}.

\

Next, we address the nonlinear estimates.
For $T \in (0,T_0]$, $R \in (0,R_0]$, let $(\tz,\tp) \in \cK_T^R$.
With~$(z^*,\pi^*)$ from \autoref{prop:ref sol}, we set $(z,\pi) \coloneqq (\tv + v^*,\tQ + Q^*,\tell + \ell^*,\tomega + \omega^*,\tp + \pi^*)$.
As observed in \cite[Remark~5.2]{BBHR:23}, for $C_T^* \coloneqq \| z^* \|_{\E_T} + \| \pi^* \|_{\E_T^{\pi}}$, it is valid that $C_T^* \to 0$ as~$T \to 0$.
By construction, it also follows that
\begin{equation}\label{eq:est of norm of (z,p)}
    \| (z,\pi) \|_{\tE_T} \le R + C_T^*.
\end{equation}
We also set $C_0 \coloneqq \| z_0 \|_{\rX_\gamma} = \| (v_0,Q_0,\ell_0,\omega_0) \|_{\rX_\gamma}$.
At this stage, we invoke the following well-known lemma, allowing for an estimate of $\sup_{t \in [0,T]} \| z(t) \|_{\rX_\gamma}$ by the norm of the initial data and the solution, where the constant is independent of $T$.

\begin{lem}\label{lem:est by init data and norm of sol with appl to ref sol}
For $z \in \E_T$, there is $T$-independent $C > 0$ with $\sup_{t \in [0,T]} \| z(t) \|_{\rX_\gamma} \le C(\| z(0) \|_{\rX_\gamma} + \| z \|_{\E_T})$.
In particular, for $z^*$ from \autoref{prop:ref sol}, we have $\| z^* \|_{\BUC([0,T];\rX_\gamma)} \le C(C_0 + C_T^*)$.
\end{lem}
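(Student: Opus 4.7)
The statement is the classical $T$-independent trace/embedding inequality for the maximal-regularity space $\E_T$; once we know that $\rX_\gamma = (\rX_0,\rX_1)_{1-\nicefrac{1}{p},p}$, as recorded just before \eqref{eq:max reg space}, it follows from the $T$-free embedding $\prescript{}{0}{\E_\infty} \hookrightarrow \BUC([0,\infty);\rX_\gamma)$. My plan is to reduce the estimate on $[0,T]$ to this $T$-free one by subtracting a semigroup lift of the initial value and extending the remainder.

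First, I would fix a sectorial operator $A$ on $\rX_0$ with domain (essentially) $\rX_1$ generating a bounded analytic $C_0$-semigroup of negative exponential type; a concrete choice is $A := \mu\,\Id - \Afs(0)$ for a sufficiently large shift $\mu > 0$, whose sectoriality with angle $<\nicefrac{\pi}{2}$ is provided by \autoref{prop:sect and max reg of fluid-structure op}. Setting $w(t) := \mre^{-tA}z(0)$, standard analytic-semigroup estimates combined with the identification of $\rX_\gamma$ as a real interpolation space yield
\[
\| w \|_{\E_\infty} + \sup_{t \ge 0} \| w(t) \|_{\rX_\gamma} \le C\,\| z(0) \|_{\rX_\gamma}
\]
with a constant $C$ that is independent of $T$.

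Next, set $v := z - w|_{[0,T]}$, so that $v \in \prescript{}{0}{\E_T}$ with $\| v \|_{\E_T} \le \| z \|_{\E_T} + C\,\| z(0) \|_{\rX_\gamma}$. Exploiting $v(0)=0$, I would extend $v$ to $\tilde v$ on $[0,\infty)$ by even reflection about $t=T$ on $[T,2T]$ and then by zero beyond $2T$. Because $v(0) = 0$, the reflected function matches $0$ at $t=2T$ in $\rX_\gamma$, so the glue produces $\tilde v \in \prescript{}{0}{\E_\infty}$ with $\| \tilde v \|_{\E_\infty} \le 2^{1/p}\| v \|_{\E_T}$; in particular, this bound carries no $T$-dependence. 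Applying the $T$-free embedding $\prescript{}{0}{\E_\infty} \hookrightarrow \BUC([0,\infty);\rX_\gamma)$ to $\tilde v$, restricting back to $[0,T]$, and combining with the estimate on $w$ via the triangle inequality gives the asserted bound. The second claim follows immediately by applying the first to $z^*$, using $z^*(0) = z_0$, $\| z_0 \|_{\rX_\gamma} = C_0$ and $\| z^* \|_{\E_T} \le C_T^*$ by definition of $C_T^*$.

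The only delicate point is the $T$-independence in the extension step: one has to verify that even reflection about $T$ on $[T,2T]$ followed by extension by zero really preserves the $\rW^{1,p}(0,\infty;\rX_0) \cap \rL^p(0,\infty;\rX_1)$ structure with a constant independent of $T$. This is routine $W^{1,p}$-gluing and crucially uses $v(0)=0$ in $\rX_\gamma$ to make the extension by zero compatible at $t=2T$.
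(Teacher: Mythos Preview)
The paper does not give a proof of this lemma; it is simply introduced as ``well-known''. Your argument is a correct rendition of the standard proof: subtract a lift of the initial value to reduce to homogeneous initial data, extend $T$-independently to $[0,\infty)$, and apply the fixed embedding $\E_\infty \hookrightarrow \BUC([0,\infty);\rX_\gamma)$.

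Two minor remarks. First, the domain $\rD(\Afs)$ in \eqref{eq:fluid-structure op} is written in terms of $\bP v$ rather than $v$ and does not literally coincide with $\rX_1$; however, your argument does not actually need the specific semigroup, only the existence of some $w \in \E_\infty$ with $w(0)=z(0)$ and $\|w\|_{\E_\infty}\le C\|z(0)\|_{\rX_\gamma}$, which is precisely the surjectivity part of the trace theorem attached to the identification $\rX_\gamma=(\rX_0,\rX_1)_{1-1/p,p}$ recorded above \eqref{eq:max reg space}. Second, the even reflection followed by zero extension indeed preserves $\rW^{1,p}(0,\infty;\rX_0)\cap\rL^p(0,\infty;\rX_1)$ with constant $2^{1/p}$: continuity at $t=T$ is automatic, continuity at $t=2T$ uses $v(0)=0$ in $\rX_\gamma\hookrightarrow\rX_0$, and no matching of derivatives is required for $\rW^{1,p}$-gluing.
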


In the sequel, we derive useful embeddings.
The assertion of \autoref{lem:ests of max reg space}(a) is a consequence of the mixed derivative theorem, see e.\ g.\ \cite[Corollary~4.5.10]{PS:16}, and Sobolev embeddings.
For part~(b), we use \cite[Theorem~III.4.10.2]{Ama:95} joint with interpolation theory and embeddings for the Besov spaces, see for example \cite[Theorem~4.6.1]{Tri:78}. 

\begin{lem}\label{lem:ests of max reg space}
For $p > 4$ as well as $0 < T \le \infty$, recall $\E_T$ and $\rX_\gamma$ from \eqref{eq:max reg space} and \eqref{eq:X_gamma}, respectively.
\begin{enumerate}[(a)]
    \item For every $\theta \in (0,1)$, we find that
    \begin{equation*}
        \E_T  
        \hookrightarrow \rH^{\theta,p}\bigl(0,T;\rH^{2(1-\theta)}(\cF_0)^3 \times \rH^{2(1-\theta)+1}(\cF_0,\bS_0^3) \times \R^3 \times \R^3\bigr) \tand \E_T^Q \hookrightarrow \rL^\infty\bigl(0,T;\rH^2(\cF_0,\bS_0^3)\bigr).
    \end{equation*}
    \item It holds that $ \E_T \hookrightarrow \BUC([0,T];\rX_\gamma)  \hookrightarrow \BUC\bigl([0,T];\rB_{2p}^{2-\nicefrac{2}{p}}(\cF_0)^3 \times \rB_{2p}^{3-\nicefrac{2}{p}}(\cF_0,\bS_0^3) \times \R^6\bigr)$, implying $\E_T \hookrightarrow \BUC\bigl([0,T];\rC(\overline{\cF_0})^3 \times \rC^1(\overline{\cF_0},\bS_0^3) \times \R^6\bigr)$.
    For $T = \infty$, $\BUC([0,T])$ is replaced by $\rL^\infty(0,\infty)$.
\end{enumerate}
The embedding constants can be chosen $T$-independent if the functions have homogeneous initial values.
\end{lem}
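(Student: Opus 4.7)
The plan is to reduce each statement to standard abstract facts about the maximal regularity space $\E_T = \rW^{1,p}(0,T;\rX_0) \cap \rL^p(0,T;\rX_1)$. All spaces involved are Hilbert and in particular UMD, so both the mixed derivative theorem and Amann's trace embedding are applicable. Only the boundary and interface constraints built into $\rX_1$ need be dropped for the componentwise arguments in part~(a).

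For part~(a), I would exploit the inclusion $\rX_1 \subset \rH^2(\cF_0)^3 \cap \rL_{\sigma}^2(\cF_0) \times \rH^3(\cF_0,\bS_0^3) \times \R^3 \times \R^3$ and invoke the mixed derivative theorem \cite[Corollary~4.5.10]{PS:16} componentwise: for $u \in \rW^{1,p}(0,T;H_0) \cap \rL^p(0,T;H_1)$ with $H_1 \hookrightarrow H_0$ a pair of Hilbert spaces, one has $u \in \rH^{\theta,p}(0,T;[H_0,H_1]_{1-\theta})$ for every $\theta \in (0,1)$. Complex interpolation on the smooth bounded domain $\cF_0$ yields $[\rL^2(\cF_0),\rH^2(\cF_0)]_{1-\theta} = \rH^{2(1-\theta)}(\cF_0)$ and $[\rH^1(\cF_0),\rH^3(\cF_0)]_{1-\theta} = \rH^{2(1-\theta)+1}(\cF_0)$, while the finite-dimensional factors are left untouched. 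For the second claim of~(a), Amann's trace theorem applied to $\E_T^Q = \rW^{1,p}(0,T;\rH^1(\cF_0,\bS_0^3)) \cap \rL^p(0,T;\rD(\DeltaN^1))$ gives $\E_T^Q \hookrightarrow \BUC([0,T];\rB_{2p}^{3-\nicefrac{2}{p}}(\cF_0,\bS_0^3))$, after absorbing the Neumann-type subspace into the weaker inclusion with $\rH^3$. Since $p > 4$ implies $3-\nicefrac{2}{p} > \nicefrac{5}{2} > 2$, the Besov--Sobolev embedding $\rB_{2p}^{3-\nicefrac{2}{p}}(\cF_0) \hookrightarrow \rH^2(\cF_0)$ via \cite[Theorem~4.6.1]{Tri:78} then provides the claim, which is in fact stronger than the stated $\rL^\infty$-type embedding.

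For part~(b), the first embedding $\E_T \hookrightarrow \BUC([0,T];\rX_\gamma)$ is precisely \cite[Theorem~III.4.10.2]{Ama:95} combined with the identification $\rX_\gamma = (\rX_0,\rX_1)_{1-\nicefrac{1}{p},p}$ already recalled from \cite[Lemma~5.3]{BBH:23}. The inclusion $\rX_\gamma \hookrightarrow \rB_{2p}^{2-\nicefrac{2}{p}}(\cF_0)^3 \times \rB_{2p}^{3-\nicefrac{2}{p}}(\cF_0,\bS_0^3) \times \R^6$ is immediate from the definition of $\rX_\gamma$ by dropping the divergence-free constraint and the pointwise boundary and interface conditions. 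The final inclusion into $\BUC([0,T];\rC(\overline{\cF_0})^3 \times \rC^1(\overline{\cF_0},\bS_0^3) \times \R^6)$ is again Besov-Sobolev in space: for $p > 4$ we have $2-\nicefrac{2}{p} > \nicefrac{3}{2}$ and $3-\nicefrac{2}{p} > \nicefrac{5}{2}$, which in dimension three yield $\rB_{2p}^{2-\nicefrac{2}{p}}(\cF_0) \hookrightarrow \rC(\overline{\cF_0})$ and $\rB_{2p}^{3-\nicefrac{2}{p}}(\cF_0) \hookrightarrow \rC^1(\overline{\cF_0})$ by \cite[Theorem~4.6.1]{Tri:78}.

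Concerning the $T$-independence of the embedding constants for functions with homogeneous initial values, the plan is to invoke the standard extension-by-zero device to the half-line $\R_+$: such a function extends to an element of $\rW^{1,p}(\R_+;\rX_0) \cap \rL^p(\R_+;\rX_1)$ with equivalent norm, to which all the above estimates apply with constants that are manifestly independent of the cutoff time. For $T = \infty$ the replacement of $\BUC$ by $\rL^\infty$ is forced because neither tool yields uniform continuity at infinity in the absence of a decay assumption, which is why the lemma is phrased as it is. I do not anticipate any serious obstacle; the most delicate bookkeeping is only the verification of the complex-interpolation identities used in part~(a), but this is classical on smooth bounded domains at the Sobolev scales in play, and in any case only one-sided inclusions are needed for our purposes.
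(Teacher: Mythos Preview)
Your proposal is correct and follows essentially the same approach as the paper: the paper's proof is only a sketch, citing the mixed derivative theorem \cite[Corollary~4.5.10]{PS:16} and Sobolev embeddings for~(a), and \cite[Theorem~III.4.10.2]{Ama:95} together with Besov space embeddings \cite[Theorem~4.6.1]{Tri:78} for~(b), which is exactly what you carry out in detail. The only minor variation is that for the second embedding in~(a) you go via the trace space $\rB_{2p}^{3-\nicefrac{2}{p}}$ and then embed into $\rH^2$, whereas the paper's phrasing suggests using the mixed derivative embedding at $\theta = \nicefrac{1}{2}$ combined with the temporal Sobolev embedding $\rH^{\nicefrac{1}{2},p}(0,T) \hookrightarrow \rL^\infty(0,T)$; both routes are standard and equivalent in strength here.
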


Given the transformed body velocities, it is possible to successively deduce the matrix $\bO$, the original body velocities~$h'$ and $\Omega$ as well as the diffeomorphisms $X$ and $Y$ as described in the context of the transformation to the fixed domain in \autoref{sec:change of var}.
For more details, see \cite[Remark~6.1]{BBHR:23}.

In the sequel, we discuss useful estimates of $X$, $Y$ and related terms.
For $(\ell_i,\omega_i) \in \rW^{1,p}(0,T)^6$, the subscript $i$ is used to denote objects associated to~$(\ell_i,\omega_i)$.
Besides, we employ $\| \cdot \|_\infty$ and $\| \cdot \|_{\infty,\infty}$ for the $\rL^\infty$-norm in time and time and space, respectively.
The assertions of \autoref{lem:ests of trafo and related terms}(a) and~(b) follow upon modifying the arguments in \cite[Section~6.1]{GGH:13}.
For the estimates of the second derivatives of $X$ and~$Y$, note that the diffeomorphisms associated to $\ell = \omega = 0$ are the identity, so their second derivatives vanish.
\autoref{lem:ests of max reg space}(b) and \autoref{lem:est by init data and norm of sol with appl to ref sol} then imply $\| \ell_i \|_\infty + \| \omega_i \|_\infty \le C (R + C_0 + C_T^*)$.
For a proof of \autoref{lem:ests of trafo and related terms}(c), we refer to \cite[Lemma~6.5]{GGH:13}, while \autoref{lem:ests of trafo and related terms}(d) can be deduced from~(c).

\begin{lem}\label{lem:ests of trafo and related terms}
Consider $(\ell,\omega)$, $(\ell_1,\omega_1)$, $(\ell_2,\omega_2) \in \rW^{1,p}(0,T)^6$ as well as the associated diffeomorphisms $X$, $X_1$, $X_2$ and $Y$, $Y_1$ and $Y_2$.
\begin{enumerate}[(a)]
    \item For $i=1,2$, we have $X_i$, $Y_i \in \rC^1\left(0,T;\rC^\infty(\R^3)^3\right)$, with $\| \partial^{\alpha} X_i \|_{\infty,\infty} + \| \partial^{\alpha} Y_i \|_{\infty,\infty} \le C$ and
    \begin{equation*}
        \| \partial^{\beta}(X_1 - X_2) \|_{\infty,\infty} + \| \partial^{\beta}(Y_1 - Y_2) \|_{\infty,\infty} \le CT (\| \ell_1 - \ell_2 \|_{\infty} + \| \omega_1 - \omega_2 \|_{\infty})
    \end{equation*}
    for all multi-indices $\alpha$, $\beta$ with $1 \le |\alpha| \le 3$ and $0 \le |\beta| \le 3$.
    For $j,k,l \in \{1,2,3\}$, we have
    \begin{equation*}
        \begin{aligned}
            \| \partial_j \partial_k (X_i)_l \|_{\infty,\infty} 
            &\le CT(R + C_0 + C_T^*), &&\| \partial_j \partial_k (X_i)_l \|_{\rL^\infty(0,T;\rW^{1,\infty}(\cF_0))} \le CT(R + C_0 + C_T^*),\\
            \| \partial_j \partial_k (Y_i)_l \|_{\infty,\infty} 
            &\le CT(R + C_0 + C_T^*) \tand &&\| \partial_j \partial_k (Y_i)_l \|_{\rL^\infty(0,T;\rW^{1,\infty}(\cF_0))} \le CT(R + C_0 + C_T^*).
        \end{aligned}
    \end{equation*}
    \item The matrix $\bO_i$ from \autoref{sec:change of var} fulfills
    \begin{equation*}
        \| \bO_1 - \bO_2 \|_\infty \le C T \cdot \| M_1 - M_2 \|_\infty \le C T \cdot \| \omega_1 - \omega_2 \|_\infty \tand \| \bO_i \|_\infty + \| \bO_i^\top \|_\infty \le C.
    \end{equation*}
    \item For every $\alpha$ with $0 \le |\alpha| \le 1$, the covariant and contravariant tensor and the Christoffel symbol associated to $X$, $X_1$, $X_2$ and $Y$, $Y_1$, $Y_2$ fulfill $\| \partial^\alpha g^{ij} \|_{\infty,\infty} + \| \partial^\alpha g_{ij} \|_{\infty,\infty} + \| \partial^\alpha \Gamma_{jk}^i \|_{\infty,\infty} \le C$ and
    \begin{equation*}
        \| \partial^\alpha ((g_1)^{ij} - (g_2)^{ij}) \|_{\infty,\infty} + \| \partial^\alpha ((g_1)_{ij} - (g_2)_{ij}) \|_{\infty,\infty} + \| \partial^\alpha ((\Gamma_1)_{jk}^i - (\Gamma_2)_{jk}^i) \|_{\infty,\infty} \le C T \cdot \| (\tell_1 - \tell_2,\tomega_1 - \tomega_2) \|_\infty.
    \end{equation*}   
    \item For $i,j \in \{1,2,3\}$, we have $ \| g^{ij} - \delta_{ij} \|_{\infty,\infty}$, $\| \partial_i X_j - \delta_{ij} \|_{\infty,\infty}$, $\| \partial_i Y_j - \delta_{ij} \|_{\infty,\infty} \le C T(R + C_0 + C_T^*)$.
    The estimates by $C T(R + C_0 + C_T^*)$ remain valid when $\| \cdot \|_{\infty,\infty}$ is replaced by $\| \cdot \|_{\rL^\infty(0,T;\rW^{1,\infty}(\cF_0))}$.
\end{enumerate}
\end{lem}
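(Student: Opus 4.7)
The plan is to follow the approach of \cite{GGH:13} adapted to the present cut-off setup, where $b \in \rW^{1,p}(0,T;\rC_{c,\sigma}(\cF_0)^3)$ depends linearly on $(\ell,\omega,h)$ through $m$, $h$, $h'$ and the Bogovski\u{\i} correction. For part~(a), I would first recover the $\rC^1$-in-time and $\rC^\infty$-in-space regularity of $X_i$, $Y_i$ from Picard–Lindel\"of applied to \eqref{eq:IVP X} and standard smooth dependence on parameters, noting that volume-preservation gives $\det \rJ_{X_i} = 1$ and so $\rJ_{Y_i} = \rJ_{X_i}^{-1}$ stays bounded. Uniform bounds $\|\partial^\alpha X_i\|_{\infty,\infty}+\|\partial^\alpha Y_i\|_{\infty,\infty} \le C$ for $|\alpha|\le 3$ follow by differentiating the ODE spatially, obtaining linear ODE systems for $\partial^\alpha X_i$ driven by lower-order derivatives, and applying Gr\"onwall iteratively; the uniformity in $(\ell_i,\omega_i)$ comes from the fact that $b$ and its spatial derivatives are controlled pointwise by the (a priori bounded) body velocities via the cut-off $\chi$ and the mapping properties of $B_{\cF_0}$. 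The stability estimate $\|\partial^\beta(X_1-X_2)\|_{\infty,\infty}\le CT(\|\ell_1-\ell_2\|_\infty + \|\omega_1-\omega_2\|_\infty)$ is obtained by subtracting the two ODEs, integrating from $0$ to $t\le T$, exploiting the linear structure of $b$ in $(m,h,h')$ to get Lipschitz bounds of the form $\|b_1-b_2\|_\infty \le C(\|\ell_1-\ell_2\|_\infty+\|\omega_1-\omega_2\|_\infty)$, and closing by Gr\"onwall; differentiating the ODE for $X_1-X_2$ in space gives the $|\beta|\ge 1$ versions.

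The estimates on the second derivatives rely on the observation that, for $\ell=\omega=0$, the right-hand side $b$ vanishes identically, so $X$ (and $Y$) reduces to the identity and all its second spatial derivatives are zero. Writing the ODE for $\partial_j\partial_k (X_i)_l$ as a linear equation with a source term that is a product of first derivatives of $b$ and first derivatives of $X_i$, the source vanishes at $(\ell,\omega)=0$ and depends linearly on $(\ell,\omega)$ up to a remainder controlled by the already-established bounds. Integrating from $0$ over $[0,t]$ with $t\le T$ and invoking \autoref{lem:ests of max reg space}(b) together with \autoref{lem:est by init data and norm of sol with appl to ref sol} to estimate $\|\ell\|_\infty+\|\omega\|_\infty\le C(R+C_0+C_T^*)$ produces the factor $T(R+C_0+C_T^*)$; the $\rW^{1,\infty}$-in-space version follows by differentiating once more and repeating. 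The estimate for $Y_i$ is obtained either by differentiating the identity $Y_i(t,X_i(t,y))=y$ twice and solving for $\partial_j\partial_k (Y_i)_l$ as a polynomial in derivatives of $X_i$, or directly from the analogous ODE for $Y_i$ with right-hand side $b^{(Y)}$.

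Part~(b) follows by differentiating $\bO_1-\bO_2$ through $\bO_i'(t)=m_i(t)\bO_i(t)$ with $\bO_i(0)=\Id_3$, giving a linear ODE with source $(m_1-m_2)\bO_2$; since $m_i$ is linear in $\omega_i$ and $\bO_i\in \SO(3)$ is uniformly bounded, integration on $[0,T]$ yields the claimed Lipschitz estimate in $T\|\omega_1-\omega_2\|_\infty$. Parts~(c) and~(d) are then purely algebraic: by \eqref{eq:contrvar covar and Christoffel}, $g^{ij}$, $g_{ij}$ and $\Gamma^i_{jk}$ are polynomial expressions in $\partial X$, $\partial Y$ and $\partial^2 X$, $\partial^2 Y$, so the uniform bounds and Lipschitz estimates in~(c) follow from the product rule together with the bounds from~(a). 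For~(d) the same identity-at-zero observation applies: $g^{ij}|_{\ell=\omega=0}=\delta_{ij}$, $\partial_i X_j|_{\ell=\omega=0}=\delta_{ij}$, $\partial_i Y_j|_{\ell=\omega=0}=\delta_{ij}$, so the deviation from the identity inherits the $T(R+C_0+C_T^*)$-estimate from the second-derivative bounds in~(a) via a Taylor expansion in time. The main bookkeeping obstacle is extracting the factor $T$ uniformly in the data in the second-derivative and deviation-from-identity estimates; this is handled systematically by expanding around the trivial solution $X\equiv\mathrm{id}$ at $(\ell,\omega)=0$ and using the integral remainder, where the smallness comes from integrating a bounded source over a short interval.
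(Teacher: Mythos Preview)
Your proposal is correct and follows essentially the same approach as the paper: the paper refers to \cite[Section~6.1]{GGH:13} for~(a) and~(b), to \cite[Lemma~6.5]{GGH:13} for~(c), and deduces~(d) from~(c), while for the second-derivative bounds it invokes exactly the identity-at-zero observation and the estimate $\|\ell_i\|_\infty+\|\omega_i\|_\infty\le C(R+C_0+C_T^*)$ via \autoref{lem:ests of max reg space}(b) and \autoref{lem:est by init data and norm of sol with appl to ref sol} that you spell out. Your expanded ODE/Gr\"onwall argument is precisely what underlies those references, so there is no substantive difference.
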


Below, we collect estimates of some terms from \eqref{eq:rhs F_1 to F_4}.
Here terms associated to $z_i$ are indicated by $^{(i)}$.
The estimates of the first four terms in \autoref{lem:prep ests nonlin terms}(a) can be deduced from \cite[Lemma~6.6]{GGH:13}, while the last two terms in~(a) can be handled similarly as in \cite[(6.8) and Proposition~6.6]{BBHR:23} upon invoking the estimates in $\rL^\infty(0,T;\rW^{1,\infty}(\cF_0))$ from \autoref{lem:ests of trafo and related terms} and $\| \tQ \|_{\rL^p(0,T;\rH^2(\cF_0))} \le T^{\nicefrac{1}{p}} \cdot \| \tQ \|_{\rL^\infty(0,T;\rH^2(\cF_0))} \le C T^{\nicefrac{1}{p}} \cdot \| \tz \|_{\E_T}$ for $\tQ \in \prescript{}{0}{\E_T^Q}$ by \autoref{lem:ests of max reg space}(a).
For the estimates in \autoref{lem:prep ests nonlin terms}(b) concerning the Cauchy stress tensor part, we refer to \cite[Lemma~6.6]{GGH:13}, while the estimate of the remaining part can be derived from mapping properties of~$\cJ$ defined below, the estimate $\| \bO - \Id_3 \|_\infty \le C T$, following in turn from \autoref{lem:ests of trafo and related terms}(b), and a suitable expansion of the difference.

\begin{lem}\label{lem:prep ests nonlin terms}
Let $p > 4$, recall $(z^*,\pi^*)$ from \autoref{prop:ref sol}, and consider $(z,\pi) = (\tz + z^*,\tp + \pi^*)$ for~$(\tz,\tp) \in \cK_T^R$.
Then there exists $C(R,T) > 0$, with $C(R,T) \to 0$ as $R \to 0$ and $T \to 0$, such that
\begin{enumerate}[(a)]
    \item $\| (\cL_1 - \Delta)v \|_{\F_T^v}$, $\| \cM v \|_{\F_T^v}$, $\| \cN(v) \|_{\F_T^v}$, $\| (\cG - \nabla)\pi \|_{\F_T^v}$, $\| (\cL_2 - \Delta)Q \|_{\F_T^Q}$, $\| (\del_t Y \cdot \nabla) Q  \|_{\F_T^Q} \le C(R,T)$.
    \item For $\eps > 0$, define $\cJ \colon \rH^{\eps + \nicefrac{1}{2}}(\cF_0)^{3 \times 3} \to \R^6$ by $\cJ(h) \coloneqq (\int_{\del\cS_0} h n \rd \Gamma, \int_{\del\cS_0} y \times h n \rd \Gamma)^\top$.
    Then there is~$C > 0$ with $\| \cJ((\rT - \cT)(v,\pi,Q)) \|_p \le C T \cdot \| z \|_{\E_T}$.
\end{enumerate}
\end{lem}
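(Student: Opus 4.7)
The plan for part (a) is to exploit that each listed expression is a sum of terms of the form (coefficient with small $\rL^\infty_{t,x}$-norm) $\times$ (bounded factor in $\E_T$). For $(\cL_1 - \Delta)v$, the leading piece is $\sum_{j,k}(g^{jk} - \delta_{jk})\partial_j \partial_k v_i$ with lower-order Christoffel and $\Delta Y_l$ corrections; \autoref{lem:ests of trafo and related terms}(d) bounds each such coefficient in $\rL^\infty_{t,x}$ by $CT(R+C_0+C_T^*)$, whereas $v \in \rL^p(0,T;\rH^2)$ has norm controlled by $R+C_T^*$ via \eqref{eq:est of norm of (z,p)}. Pulling the coefficient out of the time integral then yields $\|(\cL_1 - \Delta)v\|_{\F_T^v} \le CT(R+C_0+C_T^*)\|v\|_{\rL^p(0,T;\rH^2)}$, and the same template handles $\cM v$, $(\cG - \nabla)\pi$, $(\cL_2 - \Delta)Q$, and $(\partial_t Y \cdot \nabla)Q$, using that $\partial_t Y = b^{(Y)}(t,Y)$ vanishes when $(\ell,\omega) = 0$.

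For the quadratic convective term $\cN(v)$, I would write $v = \tv + v^*$, split into four products, place one factor in $\rL^\infty_{t,x}$ via \autoref{lem:ests of max reg space}(b) and the other in $\rL^p(0,T;\rL^2)$, and extract an additional $T^{1/p}$ factor by H\"older in time. For part~(b), the key observation is that $\bO$ is constant in space, so $\nabla(\bO v) = \bO \nabla v$ and hence
\begin{equation*}
(\rT - \cT)(v,\pi,Q) = \nabla v\,(\Id_3 - \bO) + (\Id_3 - \bO^\top)(\nabla v)^\top + \tfrac{2\xi}{3}\bigl[(\Delta Q - Q) - \bO^\top(\Delta Q - Q)\bO\bigr],
\end{equation*}
in which the pressure cancels via $\bO^\top \bO = \Id_3$ and every surviving term contains a factor of $\bO - \Id_3$. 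Since $\|\bO - \Id_3\|_\infty \le CT$ by \autoref{lem:ests of trafo and related terms}(b) and every factor inside $\rT$ lies in $\rL^p(0,T;\rH^{1/2+\eps}(\cF_0)^{3\times 3})$ by definition of $\E_T$, the boundedness of the trace operator $\cJ$ on this space yields the desired bound $\|\cJ((\rT - \cT)(v,\pi,Q))\|_{\rL^p(0,T)} \le CT\,\|z\|_{\E_T}$.

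The main obstacle will be arranging the smallness in part (a) uniformly in the joint limit $(R,T) \to (0,0)$: the reference solution $v^*$ is not small in the top-order norms, so in each product one must carefully choose where to extract a $T$- or $T^{1/p}$-factor, and then verify that $C_T^* \to 0$ compensates the leftover $C_0$. Once the right factorization is fixed in each term, the individual inequalities are routine consequences of \autoref{lem:ests of max reg space} and \autoref{lem:ests of trafo and related terms}.
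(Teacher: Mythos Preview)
Your proposal is correct and follows essentially the same approach as the paper: both extract smallness in part~(a) from the $T$-dependent coefficient bounds of \autoref{lem:ests of trafo and related terms} together with H\"older-in-time factors $T^{1/p}$ and $C_T^* \to 0$, and both handle part~(b) by expanding $\rT-\cT$ so that every surviving term carries a factor $\bO-\Id_3$ or $\bO^\top-\Id_3$, bounded by $CT$ via \autoref{lem:ests of trafo and related terms}(b), after which the trace bound for $\cJ$ finishes. The only slip is a sign in your displayed expansion of $(\rT-\cT)$ (the $Q$-block should carry $-\tfrac{2\xi}{3}$), which is irrelevant for the norm estimate.
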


The lemma below on Lipschitz estimates of the non-transformed terms follows from \cite[Lemma~6.3]{HHW:24}.

\begin{lem}\label{lem:Lipschitz est of lc part}
Let $p > 4$ and $z_1$, $z_2 \in \rX_\gamma$, with $\rX_\gamma$ as defined in \eqref{eq:X_gamma}, and recall $A_\xi$ from \eqref{eq:op matrix liquid crystals}.
Then there exists $C > 0$ such that $\| A_\xi(Q_1) - A_\xi(Q_2) \|_{\cL(\rH^2(\cF_0) \times \rH^3(\cF_0),\rL^2(\cF_0) \times \rH^1(\cF_0))} \le C \cdot \| z_1 - z_2 \|_{\rX_\gamma}$.
\end{lem}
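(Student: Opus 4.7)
The plan is to exploit that only the off-diagonal entries of $A_\xi(\hQ)$ in \eqref{eq:op matrix liquid crystals} depend on $\hQ$, together with the polynomial structure of $S_\xi(Q)A$ and $\tS_\xi(Q)B$ from \eqref{eq:S_xi and tS_xi} in $Q$ and the embedding $\rX_\gamma \hookrightarrow \rC^1(\overline{\cF_0},\bS_0^3)$ (on the $Q$-component), which follows from standard Besov embeddings since $p > 4$ renders $3 - \nicefrac{2}{p}$ sufficiently large.

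Since $A_2$ and $\DeltaN^1$ are $Q$-independent, the first step is to observe that
\begin{equation*}
    A_\xi(Q_1) - A_\xi(Q_2) = \begin{pmatrix}
        0 & -\bP \mdiv\bigl(S_\xi(Q_1) - S_\xi(Q_2)\bigr)(\Delta - \Id_3)\\
        \bigl(\tS_\xi(Q_1) - \tS_\xi(Q_2)\bigr) \nabla & 0
    \end{pmatrix},
\end{equation*}
so the problem reduces to estimating the two off-diagonal terms in the appropriate operator norms. For the upper-right block, take $A \in \rH^3(\cF_0,\bS_0^3)$ and set $B \coloneqq (\Delta - \Id_3)A \in \rH^1(\cF_0)$. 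A direct telescoping using the explicit formula \eqref{eq:S_xi and tS_xi} yields
\begin{align*}
    S_\xi(Q_1)B - S_\xi(Q_2)B
    &= [Q_1 - Q_2, B] - \xi\{Q_1 - Q_2, B\} + 2\xi(Q_1 - Q_2)\tr(Q_1 B)\\
    &\quad + 2\xi\bigl(Q_2 + \nicefrac{\Id_3}{3}\bigr)\tr\bigl((Q_1 - Q_2)B\bigr),
\end{align*}
so every summand is linear in $Q_1 - Q_2$, with remaining factor at most linear in $Q_1$ or $Q_2$. Using the algebra estimate $\|fg\|_{\rH^1} \le C\|f\|_{\rC^1}\|g\|_{\rH^1}$ together with $\rX_\gamma \hookrightarrow \rC^1$ bounds the above difference in $\rH^1(\cF_0)$, and applying $\bP\mdiv$ lands us in $\rL^2(\cF_0)$. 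The lower-left block is treated analogously via the same telescoping of $\tS_\xi(Q_1) - \tS_\xi(Q_2)$ acting on $\nabla v$ for $v \in \rH^2(\cF_0)^3$, producing the $\rH^1$-output.

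The main subtlety is that the quadratic-in-$Q$ trace term produces a prefactor of the form $(1 + \|Q_1\|_{\rX_\gamma} + \|Q_2\|_{\rX_\gamma})$ in front of $\|Q_1 - Q_2\|_{\rX_\gamma}$, so the estimate is in fact locally Lipschitz, uniform on bounded sets in $\rX_\gamma$. This is sufficient for the fixed-point arguments of the subsequent sections, where $Q_1$ and $Q_2$ remain in a fixed ball. Combining both off-diagonal estimates then yields the asserted bound on $\|A_\xi(Q_1) - A_\xi(Q_2)\|_{\cL(\rH^2(\cF_0) \times \rH^3(\cF_0),\rL^2(\cF_0) \times \rH^1(\cF_0))}$.
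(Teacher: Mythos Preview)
The paper does not give its own proof here; it simply cites \cite[Lemma~6.3]{HHW:24}. Your argument is correct and is exactly the natural one: isolate the $Q$-dependent off-diagonal blocks, telescope the polynomial expressions $S_\xi$ and $\tS_\xi$ from \eqref{eq:S_xi and tS_xi}, and use the multiplication estimate $\|fg\|_{\rH^1} \le C\|f\|_{\rC^1}\|g\|_{\rH^1}$ together with the embedding $\rB_{2p}^{3-\nicefrac{2}{p}}(\cF_0) \hookrightarrow \rC^1(\overline{\cF_0})$, valid for $p > 4$ in three dimensions. Your remark that the quadratic trace term forces the constant to depend on $\|Q_1\|_{\rX_\gamma} + \|Q_2\|_{\rX_\gamma}$ is correct and worth flagging; the lemma as stated is slightly loose on this point, but the local-Lipschitz version you obtain is precisely what is used in the proofs of \autoref{lem:est highest order F_1} and \autoref{lem:est highest order F_2}, where all arguments stay in a fixed $\rX_\gamma$-ball.
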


As a preparation of the estimates of $F_1$, we now address the second line of $F_1$ as made precise in \eqref{eq:rhs F_1 to F_4}.

\begin{lem}\label{lem:est highest order F_1}
Consider $p > 4$, and for the reference solution $(z^*,\pi^*)$ from \autoref{prop:ref sol} and $(\tz,\tp) \in \cK_T^R$, let $(z,\pi) = (\tz + z^*,\tp + \pi^*)$.
Then there exists $C(R,T) > 0$ such that $C(R,T) \to 0$ as $R \to 0$ and $T \to 0$, and with $\| \cB_{\tau_\rh}(\tQ + Q^*) + \cB_{\sigma}(\tQ + Q^*) - \mdiv S_\xi(Q_0)(\Delta - \Id_3)(\tQ + Q^*) \|_{\F_T^v} \le C(R,T)$.
\end{lem}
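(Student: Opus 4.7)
The plan is to interpose the untransformed (``flat'') highest-order operator and split
\begin{equation*}
    E \coloneqq \cB_{\tau_\rh}(Q) + \cB_{\sigma}(Q) - \mdiv S_\xi(Q_0)(\Delta - \Id_3)Q,
\end{equation*}
with $Q = \tQ + Q^*$, as $E = E_1 + E_2$ with
\begin{align*}
    E_1 &\coloneqq \cB_{\tau_\rh}(Q) + \cB_{\sigma}(Q) - \mdiv\bigl(S_\xi(Q)(\Delta - \Id_3)Q\bigr),\\
    E_2 &\coloneqq \mdiv\bigl((S_\xi(Q) - S_\xi(Q_0))(\Delta - \Id_3)Q\bigr).
\end{align*}
The admissibility of this split is guaranteed by the identity $\tau_\rh(Q) + \sigma(Q) = S_\xi(Q)(\Delta - \Id_3)Q$ from \autoref{sec:energy and equilibria}, which identifies $\mdiv\bigl(S_\xi(Q)(\Delta - \Id_3)Q\bigr) = B_{\tau_\rh}(Q)Q + B_\sigma(Q)Q$ as the untransformed counterpart of $\cB_{\tau_\rh}(Q) + \cB_\sigma(Q)$. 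Thus $E_1$ is a pure ``transform defect'' and $E_2$ a Lipschitz-in-coefficient contribution in the spirit of \autoref{lem:Lipschitz est of lc part}.

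For $E_1$ I would go through the explicit expansions \eqref{eq:cB_sigma}, \eqref{eq:cB_tau_rh_1} and the analogous formulas for $\cB_{\tau_{\rh,2}}$ and $\cB_{\tau_{\rh,3}}$ from \autoref{sec:change of var}, and subtract the flat divergences $B_\sigma(Q)Q$ and $B_{\tau_{\rh,i}}(Q)Q$ term by term. Every resulting summand carries at least one ``flatness defect'' factor such as $g^{ij} - \delta_{ij}$, $\del_l Y_k - \delta_{lk}$, $\del_j g^{kl}$, $\del_j \del_k Y_l$, $\Delta Y_k$, or $\del_j \Delta Y_k$, multiplied by at most a third-order derivative of $Q$ and, in the bilinear and cubic contributions from $\cB_\sigma$, $\cB_{\tau_{\rh,2}}$, $\cB_{\tau_{\rh,3}}$, by lower-order factors of $Q$ acting as coefficients. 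By \autoref{lem:ests of trafo and related terms}(a) and (d) every flatness defect is controlled in $\rL^\infty(0,T;\rW^{1,\infty}(\cF_0))$ by $CT(R + C_0 + C_T^*)$; by \autoref{lem:ests of max reg space} and \autoref{lem:est by init data and norm of sol with appl to ref sol} the coefficient $Q$-factors sit in $\rL^\infty(0,T;\rC(\overline{\cF_0}))$ with norm $\le C(C_0 + R + C_T^*)$; and the third-order derivative lies in $\rL^p(0,T;\rL^2(\cF_0)) = \F_T^v$ with norm $\le R + C_T^*$. H\"older's inequality in time then yields $\| E_1 \|_{\F_T^v} \le CT(R + C_0 + C_T^*)^3$, which vanishes as $R, T \to 0$ since $C_0$ is fixed and $C_T^* \to 0$.

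For $E_2$ I would use the explicit form \eqref{eq:S_xi and tS_xi} to expand
\begin{equation*}
    (S_\xi(Q) - S_\xi(Q_0))A = [Q-Q_0,A] - \xi\{Q-Q_0,A\} + 2\xi\bigl((Q-Q_0)\tr(QA) + (Q_0 + \nicefrac{\Id_3}{3})\tr((Q-Q_0)A)\bigr),
\end{equation*}
take $A = (\Delta - \Id_3)Q$ and distribute the outer $\mdiv$. Every resulting summand then carries a factor $Q - Q_0$ or $\nabla(Q - Q_0)$ against a derivative of $Q$ of order at most three and possible lower-order $Q$, $Q_0$ coefficients. Since $\tQ(0) = 0$ and $Q^*(0) = Q_0$ imply $Q(0) = Q_0$, the $\rC^1$-part of the embedding in \autoref{lem:ests of max reg space}(b) combined with \autoref{prop:ref sol} gives $\| Q - Q_0 \|_{\rL^\infty(0,T;\rC^1(\overline{\cF_0}))} \to 0$ as $R, T \to 0$. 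Combining this with the bounds of the preceding paragraph for the third-order derivative of $Q$ and the coefficient factors yields $\| E_2 \|_{\F_T^v} \le C(R,T)$ with $C(R,T) \to 0$.

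The main obstacle will be the combinatorial bookkeeping inside $E_1$: the operators $\cB_{\tau_\rh}$ and $\cB_\sigma$ comprise a large number of summands of differing polynomial order in $Q$ and in the Jacobian data of the transform, and one must verify that no $\del^3 Q$ summand survives without being paired with either a small flatness defect from \autoref{lem:ests of trafo and related terms} or an $(R + C_T^*)$-small lower-order $Q$-factor. Once the matching is performed, the resulting bilinear and trilinear estimates in the anisotropic ground space $\rL^2(\cF_0)$ are standard applications of Sobolev embedding in dimension three and H\"older's inequality in time.
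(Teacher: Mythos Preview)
Your proposal is correct and follows essentially the same route as the paper. Your $E_1$ is exactly the paper's term $\rI$ (the transform defect), handled identically via the flatness-defect bounds of \autoref{lem:ests of trafo and related terms}. Your $E_2$ is the paper's $\rII + \rIII$ combined; the only cosmetic difference is that the paper inserts the intermediate coefficient $Q^*$ and applies \autoref{lem:Lipschitz est of lc part} twice (once for $Q$ vs.\ $Q^*$, once for $Q^*$ vs.\ $Q_0$), whereas you expand $S_\xi(Q)-S_\xi(Q_0)$ directly and invoke the $\rC^1$-smallness of $Q-Q_0$ from \autoref{lem:ests of max reg space}(b) and \autoref{prop:ref sol}. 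Both yield the same estimate.
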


\begin{proof}
The idea is to add and subtract terms, leading to two estimates which only concern the non-transformed terms and an estimate of the difference of transformed and non-transformed terms.
From \autoref{sec:change of var}, we recall $B_{\tau_\rh}(\tQ + Q^*)(\tQ + Q^*) + B_\sigma(\tQ + Q^*)(\tQ + Q^*) = \mdiv S_\xi(\tQ + Q^*)(\Delta - \Id_3)(\tQ + Q^*)$.
With this identity, we expand the difference to be estimated as
\begin{equation*}
    \begin{aligned}
        \rI + \rII + \rIII
        &\coloneqq \cB_{\tau_\rh}(\tQ + Q^*) - B_{\tau_\rh}(\tQ + Q^*)(\tQ + Q^*) + \cB_{\sigma}(\tQ + Q^*) - B_\sigma(\tQ + Q^*)(\tQ + Q^*)\\
        &\quad + \mdiv S_\xi(\tQ + Q^*)(\Delta - \Id_3)(\tQ + Q^*) - \mdiv S_\xi(Q^*)(\Delta - \Id_3)(\tQ + Q^*)\\
        &\quad + \mdiv S_\xi(Q^*)(\Delta - \Id_3)(\tQ + Q^*) - \mdiv S_\xi(Q_0)(\Delta - \Id_3)(\tQ + Q^*).
    \end{aligned}
\end{equation*}
With regard to the terms $\rII$ and $\rIII$, recall that $\tz$, $z^* \in \E_T$ by construction.
Hence, \autoref{lem:ests of max reg space}(b) implies that $\tz(t) + z^*(t)$, $z^*(t) \in \rX_\gamma$ for all $t \in [0,T]$, and $z_0 \in \rX_\gamma$ is valid by assumption.
Thus, \autoref{lem:Lipschitz est of lc part} and the estimate of $\| z \|_{\E_T}$ from \eqref{eq:est of norm of (z,p)} as well as \autoref{lem:ests of max reg space}(b) imply that
\begin{equation*}
    \begin{aligned}
        \| \rII \|_{\F_T^v}
        &\le \Bigl(\int_0^T \bigl\| \bigl(A_\xi(\tQ(t) + Q^*(t)) - A_\xi(Q^*(t))\bigr) (\tv(t) + v^*(t),\tQ(t) + Q^*(t))^\top \bigr\|_{\rL^2(\cF_0) \times \rH^1(\cF_0)}^p \rd t\Bigr)^{\nicefrac{1}{p}}\\
        &\le C \Bigl(\int_0^T \| \tz(t) \|_{\rX_\gamma}^p \cdot \| \tz(t) + z^*(t) \|_{\rX_1}^p \rd t\Bigr)^{\nicefrac{1}{p}} 
        \le C \cdot \| \tz \|_{\BUC([0,T];\rX_\gamma)} \cdot \| z \|_{\E_T}
        \le C R (R + C_T^*).
    \end{aligned}
\end{equation*}
In a similar way, we find that $\| \rIII \|_{\F_T^v} \le C (R + C_T^*) \cdot \| z^* - z_0 \|_{\BUC([0,T];\rX_\gamma)}$.
From \autoref{prop:ref sol}, we recall that $\| z^* - z_0 \|_{\BUC([0,T];\rX_\gamma)}$ tends to zero as $T \to 0$.

It thus remains to estimate $\rI$.
We split this term into the part associated to $\tau_\rh$ and $\sigma$.
For brevity, we only elaborate on the estimate of $\rI_\sigma \coloneqq \cB_{\sigma}(\tQ + Q^*) - B_\sigma(\tQ + Q^*)(\tQ + Q^*)$ and remark that handling the difference $\cB_{\tau_\rh}(\tQ + Q^*) - B_{\tau_\rh}(\tQ + Q^*)(\tQ + Q^*)$ is conceptually similar.
In fact, the idea is to expand the terms to get differences which allow for estimates by a positive power of $T$ by \autoref{lem:ests of trafo and related terms}.

With regard to $\rI_\sigma$, recalling $B_\sigma(\tQ + Q^*)(\tQ + Q^*)$ and $\cB_{\sigma}(\tQ + Q^*)$ from \eqref{eq:B_sigma} and \eqref{eq:cB_sigma}, respectively, we remark that it suffices to consider the first two terms in the difference by symmetry.
In the sequel, we concentrate on the second term, as it involves the highest derivatives.
The first term in the difference~$\rI_{\sigma,1}$ can be treated in a similar fashion.
Thus, we expand $\rI_{\sigma,2} \coloneqq \cB_{\sigma,12}(Q)Q - \sum_{j,k=1}^3 Q_{ik} (\del_j \Delta Q_{kj})$ as
\begin{equation*}
    \begin{aligned}
        \rI_{\sigma,2}
        &= \sum_{j,k=1}^3 Q_{ik}\Biggl(\sum_{l=1}^3 \biggl(\sum_{m=1}^3 \Bigl(\sum_{n=1}^3 (\del_n \del_m \del_l Q_{kj})(\del_j Y_n - \delta_{jn})g^{lm} + (\del_n \del_m \del_l Q_{kj}) \delta_{jn} (g^{lm} - \delta_{lm})\Bigr)\\
        &\qquad + (\del_m \del_l Q_{kj})(\del_j g^{lm}) + (\del_m \del_l Q_{kj})(\del_j Y_m) \Delta Y_l\biggr) + (\del_l Q_{kj})(\del_j \Delta Y_l)\Biggr).
    \end{aligned}
\end{equation*}
The factor $Q_{ik}$ can be bounded in $\rL^\infty(0,T;\rL^\infty(\cF_0))$, since the embeddings from \autoref{lem:ests of max reg space}(b) and the estimate of $\| z^* \|_{\BUC([0,T];\rX_\gamma)}$ from \autoref{lem:est by init data and norm of sol with appl to ref sol} especially imply that
\begin{equation}\label{eq:est of del_l Q_ik}
    \| Q \|_{\rL^\infty(0,T;\rW^{1,\infty}(\cF_0))} \le C( \| \tz \|_{\BUC([0,T];\rX_\gamma)} + \| z^* \|_{\BUC([0,T];\rX_\gamma)}) \le C(R + C_0 + C_T^*).
\end{equation}

Now, we derive estimates of second derivatives of $Q$.
As $p > 4$, there is $q > p$ with $0 < \eta \coloneqq \nicefrac{1}{p} - \nicefrac{1}{q} < \nicefrac{1}{2}$.
From \autoref{lem:ests of max reg space}(a) and Sobolev embeddings, and for $\theta \in (\eta,\nicefrac{1}{2})$, it then follows that 
\begin{equation*}
    \begin{aligned}
        \| Q \|_{\rL^p(0,T;\rH^2(\cF_0))} 
        \le T^{\eta} \cdot \| \tQ \|_{\rL^q(0,T;\rH^2(\cF_0))} + C_T^*
        \le C T^\eta \cdot \| \tQ \|_{\rH^{\theta,p}(0,T;\rH^{2(1-\theta) + 1}(\cF_0))} + C_T^*
        \le C T^\eta R + C_T^*.
    \end{aligned}
\end{equation*}
From \eqref{eq:est of norm of (z,p)}, we deduce $\| \del^\alpha Q_{kj} \|_{\rL^p(0,T;\rL^2(\cF_0))} \le \| z \|_{\E_T} \le R + C_T^*$ for all multi-indices $\alpha$ with $0 \le |\alpha| \le 3$.
By virtue of \autoref{lem:ests of trafo and related terms}(d), the differences $\del_j Y_n - \delta_{jn}$ and $g^{lm} - \delta_{lm}$ admit estimates by $C T (R + C_0 + C_T^*)$ in~$\rL^\infty(0,T;\rL^\infty(\cF_0))$.
On the other hand, the boundedness of terms as $\del_j g^{lm}$, $\del_j Y_m$, $\Delta Y_l$ and $\del_j \Delta Y_l$ follows from \autoref{lem:ests of trafo and related terms}(c) and~(a), while a decay estimate of the term $\del_l Q_{kj}$ is a consequence of the above estimate of $ \| Q \|_{\rL^p(0,T;\rH^2(\cF_0))}$.
In total, for some $C_{\rI_{\sigma,2}}(R,T) > 0$ with $ C_{\rI_{\sigma,2}}(R,T) \to 0$ as $R \to 0$ and $T \to 0$, we conclude $\| \rI_{\sigma,2} \|_{\F_T^v} \le C_{\rI_{\sigma,2}}(R,T)$.
In view of the above remarks, this completes the proof.
\end{proof}

The next proposition asserts the self-map and Lipschitz estimates of $F_1$.
It is a consequence of \autoref{lem:prep ests nonlin terms}, \autoref{lem:est highest order F_1} and the observation that the estimates of the transformed lower order terms~$\cB_{\tau_\rl}$ are conceptually similar to those in \autoref{lem:est highest order F_1}.
The Lipschitz estimates follow in the same way as the self-map estimate, see also \cite[Section~6.1]{GGH:13} or \cite[Section~6.2]{BBHR:23} for examples of such Lipschitz estimates.

\begin{prop}\label{prop:ests of F_1}
Let $p > 4$, and for the reference solution $(z^*,\pi^*)$ from \autoref{prop:ref sol} and $(\tz,\tp)$, $(\tz_i,\tp_i) \in \cK_T^R$, $i=1,2$, consider $(z,\pi) = (\tz + z^*,\tp + \pi^*)$ and $(z_i,\pi_i) = (\tz_i + z^*,\tp_i + \pi^*)$.
Besides, recall the $T$-independent constant $\Cmr > 0$ from \autoref{thm:lin theory Q-tensor interaction}(a).
Then there are $C_1(R,T)$, $L_1(R,T) > 0$ such that $C_1(R,T) < \nicefrac{R}{8 \Cmr}$ for $T > 0$ sufficiently small and $L_1(R,T) \to 0$ as $R \to 0$ and $T \to 0$, and so that
\begin{equation*}
    \| F_1(\tz,\tp) \|_{\F_T^v} \le C_1(R,T) \tand \| F_1(\tz_1,\tp_1) - F_1(\tz_2,\tp_2) \|_{\F_T^v} \le L_1(R,T) \cdot \| (\tz_1,\tp_1) - (\tz_2,\tp_2) \|_{\tE_T}.
\end{equation*}
\end{prop}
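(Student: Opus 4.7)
The plan is to estimate $F_1$ summand by summand by decomposing $F_1(\tz,\tp) = \rI + \rII + \rIII$, where $\rI \coloneqq (\cL_1 - \Delta)v + \mu v - \cM v - \cN(v) - (\cG - \nabla)\pi$ collects the routine linearized differential contributions plus the shift, $\rII \coloneqq \cB_{\tau_\rl}(Q)$ is the transformed lower-order $Q$-part, and $\rIII \coloneqq \cB_{\tau_\rh}(Q) + \cB_{\sigma}(Q) - \mdiv S_\xi(Q_0)((\Delta - \Id_3)Q)$ gathers the high-order stress contributions.

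Four of the five summands in $\rI$ are bounded by a quantity tending to zero via \autoref{lem:prep ests nonlin terms}(a). The shift $\mu v$ is controlled via H\"older in time, $\| \mu v \|_{\F_T^v} \le \mu T^{\nicefrac{1}{p}} \| v \|_{\rL^\infty(0,T;\rL^2(\cF_0))} \le C \mu T^{\nicefrac{1}{p}} (R + C_0 + C_T^*)$, using \autoref{lem:ests of max reg space}(b), \autoref{lem:est by init data and norm of sol with appl to ref sol} and~\eqref{eq:est of norm of (z,p)}. The high-order term $\rIII$ is precisely the expression estimated in \autoref{lem:est highest order F_1}, whose proof already carries out the threefold telescoping between the transformed stress tensor, $\mdiv S_\xi(Q^*)(\Delta - \Id_3)$ and $\mdiv S_\xi(Q_0)(\Delta - \Id_3)$, relying in turn on \autoref{lem:ests of trafo and related terms}(d) and \autoref{lem:Lipschitz est of lc part}. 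For $\rII$, the explicit expression from \autoref{sec:change of var} consists of algebraic combinations polynomial (of degree at most four) in $Q$ and $\nabla Q$ multiplied by geometric coefficients of the transform. The plan is to control $Q$ and $\nabla Q$ in $\rL^\infty(0,T;\rL^\infty(\cF_0))$ via the embedding $\E_T \hookrightarrow \BUC([0,T];\rC^1(\overline{\cF_0},\bS_0^3))$ from \autoref{lem:ests of max reg space}(b), to bound the geometric coefficients uniformly through \autoref{lem:ests of trafo and related terms}(a),(c), and to extract the missing $T^{\nicefrac{1}{p}}$ factor by H\"older in time. Summation yields $C_1(R,T)$ with $C_1(R,T) \to 0$ as $R,T \to 0$; fixing $R$ and shrinking $T$ makes $C_1(R,T) < \nicefrac{R}{8 \Cmr}$.

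For the Lipschitz estimate I exploit that every summand of $F_1$ is multilinear in the dynamic variables $(v,Q,\pi,\ell,\omega)$ with coefficients depending smoothly on transform-induced geometric data. Decomposing $F_1(\tz_1,\tp_1) - F_1(\tz_2,\tp_2)$ via standard telescoping, each multilinear expression becomes a sum of terms in which exactly one factor is replaced either by the corresponding difference $(\tz_1 - \tz_2,\tp_1 - \tp_2)$ or by a difference of geometric quantities controlled in the body-velocity norms through \autoref{lem:ests of trafo and related terms}(a)--(c). Each resulting term is then estimated just as in the self-map bound, with one factor replaced by its $\tE_T$-difference, producing $L_1(R,T) \to 0$. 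The main obstacle is again $\rIII$: one has to iterate the telescoping of \autoref{lem:est highest order F_1} once more, using the Lipschitz estimate on $A_\xi$ in \autoref{lem:Lipschitz est of lc part} to pick up the desired factor $\tQ_1 - \tQ_2$ in $\rX_\gamma$, following the templates of \cite[Section~6.1]{GGH:13} and \cite[Section~6.2]{BBHR:23}.
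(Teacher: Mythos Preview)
Your proposal follows essentially the same architecture as the paper's argument: the paper likewise obtains the proposition as a direct consequence of \autoref{lem:prep ests nonlin terms} for the terms in your $\rI$, of \autoref{lem:est highest order F_1} for your $\rIII$, and then remarks that the transformed lower-order terms $\cB_{\tau_\rl}$ are treated ``conceptually similarly'' to \autoref{lem:est highest order F_1}, with the Lipschitz estimates following the templates in \cite[Section~6.1]{GGH:13} and \cite[Section~6.2]{BBHR:23}.

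There is one small inaccuracy in your treatment of $\rII$. You describe $\cB_{\tau_\rl}(Q)$ as built from polynomials in $Q$ and $\nabla Q$ alone, but the component $\cB_{\tau_{\rl,2}}$ arising from $-\nabla Q \odot \nabla Q$ contains factors of the form $(\del_i Q)\,\cL_2 Q$ and $(\del_l Q)(\del_l \del_i Q)$, i.\,e., genuine second derivatives of $Q$. For these, bounding $Q$ and $\nabla Q$ in $\rL^\infty_{t,x}$ and pulling a bare $T^{1/p}$ via H\"older is not enough; you need to place the second-derivative factor in $\rL^p(0,T;\rL^2(\cF_0))$ and extract decay exactly as in the proof of \autoref{lem:est highest order F_1} (via \autoref{lem:ests of max reg space}(a) and the $\rL^p\hookrightarrow\rL^q$ bound yielding $T^\eta$). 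This is precisely why the paper says the $\cB_{\tau_\rl}$-estimates are ``conceptually similar'' to \autoref{lem:est highest order F_1} rather than handled by a cruder argument. With this adjustment your proof goes through.
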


In order to estimate $F_2$, we also deal with the most difficult term in advance.

\begin{lem}\label{lem:est highest order F_2}
Let $p > 4$, and for the reference solution $(z^*,\pi^*)$ from \autoref{prop:ref sol} and $(\tz,\tp) \in \cK_T^R$, let~$(z,\pi) = (\tz + z^*,\tp + \pi^*)$ and $(z_i,\pi_i) = (\tz_i + z^*,\tp_i + \pi^*)$.
Then there is $C(R,T) > 0$, with $C(R,T) \to 0$ as~$R \to 0$ and $T \to 0$, such that $\| \cS(\tv + v^*,\tQ + Q^*) + \tS_\xi(Q_0)\nabla(\tv + v^*) \|_{\F_T^Q} \le C(R,T)$.
\end{lem}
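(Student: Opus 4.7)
The plan is to exploit the transformation relation $\cS(v, Q) = -\tS_\xi(Q)\cD(v)$, which follows at once by replacing $\nabla u^\cF$ with $\cD(v)$ in the identity $-S(\nabla u^\cF, Q^\cF) = \tS_\xi(Q^\cF)\nabla u^\cF$ stated below \eqref{eq:S_xi and tS_xi}. Setting $v = \tv + v^*$ and $Q = \tQ + Q^*$, I would decompose
\begin{equation*}
    \cS(v, Q) + \tS_\xi(Q_0) \nabla v = \underbrace{-\tS_\xi(Q)\bigl[\cD(v) - \nabla v\bigr]}_{=: T_1} + \underbrace{\bigl[\tS_\xi(Q_0) - \tS_\xi(Q)\bigr] \nabla v}_{=: T_2},
\end{equation*}
which splits the task into a geometric error (transformed vs.\ non-transformed gradient) and a frozen-coefficient error (value of $\tS_\xi$ at $Q_0$ vs.\ at $Q$).

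For $T_1$, formula \eqref{eq:transformed grad} lets me expand $\cD(v) - \nabla v$ as a finite sum of terms, each carrying a factor from $\{\del_l \del_k X_i,\, \del_k X_i - \delta_{ki},\, \del_j Y_l - \delta_{jl}\}$. By \autoref{lem:ests of trafo and related terms}(a) and~(d), every such factor is controlled by $CT(R + C_0 + C_T^*)$ in $\rL^\infty(0,T; \rW^{1,\infty}(\cF_0))$. Since $\tS_\xi(Q)$ is a polynomial of degree at most two in $Q$, and since \eqref{eq:est of del_l Q_ik} together with \autoref{lem:ests of max reg space}(b) give $\|Q\|_{\rL^\infty(0,T; \rW^{1,\infty})} \le C(R + C_0 + C_T^*)$, applying the product rule to $\nabla T_1$ and using the Banach algebra estimate $\|fg\|_{\rH^1} \le C\|f\|_{\rC^1}\|g\|_{\rH^1}$ along with $\|v\|_{\rL^p(0,T; \rH^2)} \le R + C_T^*$ yields $\|T_1\|_{\F_T^Q} \le CT(R + C_0 + C_T^*)^3(R + C_T^*)$, which vanishes as $T \to 0$ for bounded $R$.

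For $T_2$, since $\tS_\xi$ is a polynomial of degree at most two in its argument, I can factor $\tS_\xi(Q_0) - \tS_\xi(Q) = P(Q_0, Q)(Q_0 - Q)$ with $P$ linear in $(Q_0, Q)$ and having bounded constant coefficients. The hypothesis $p > 4$ allows me to use the $\rC^1$-part of the embedding $\rX_\gamma \hookrightarrow \BUC$ from \autoref{lem:ests of max reg space}(b), so
\begin{equation*}
    \|Q_0 - Q\|_{\rL^\infty(0,T; \rC^1(\overline{\cF_0}))} \le \|\tQ\|_{\BUC([0,T]; \rX_\gamma)} + \|Q^* - Q_0\|_{\BUC([0,T]; \rX_\gamma)} \le CR + \varepsilon(T),
\end{equation*}
with $\varepsilon(T) \to 0$ as $T \to 0$ by \autoref{prop:ref sol}, using $\tz(0) = 0$ for the first term. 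The same Banach algebra estimate then gives $\|T_2\|_{\F_T^Q} \le C(C_0 + R + C_T^*)(R + \varepsilon(T))(R + C_T^*)$, which tends to $0$ as $R, T \to 0$.

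The main obstacle is that the target space $\F_T^Q = \rL^p(0, T; \rH^1(\cF_0, \bS_0^3))$ demands full $\rH^1$-control in space, forcing us to handle spatial derivatives of products of quadratic polynomial expressions in $Q$ with first and second derivatives of $v$; the hypothesis $p > 4$ is used precisely to secure $\rX_\gamma \hookrightarrow \rC^1$ on the $Q$-component, which powers the algebra estimates throughout.
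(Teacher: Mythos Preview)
Your proof is correct and follows essentially the same approach as the paper: your term $T_1$ coincides with the paper's term $\ri$ (once one uses $\cS(v,Q)=-\tS_\xi(Q)\cD(v)$), and your $T_2$ is exactly the sum $\rii+\riii$ in the paper's three-term splitting. The only cosmetic difference is that the paper further splits $T_2$ via the intermediate point $Q^*$ and invokes the Lipschitz estimate \autoref{lem:Lipschitz est of lc part}, whereas you handle $T_2$ in one shot by polynomial factorization of $\tS_\xi(Q_0)-\tS_\xi(Q)$; both routes give the same bound.
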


\begin{proof}
As in the proof of \autoref{lem:est highest order F_1}, the idea here is to split the difference suitably.
In fact, invoking that $-S(\nabla v,Q) = \tS_\xi(Q)\nabla v$ from \autoref{sec:energy and equilibria}, we write
\begin{equation*}
    \begin{aligned}
        \ri + \rii + \riii
        &\coloneqq \bigl(\cS(\tv + v^*,\tQ + Q^*) - S(\nabla(\tv + v^*),\tQ + Q^*)\bigr) - \bigl(\tS_\xi(\tQ + Q^*)\nabla(\tv + v^*) - \tS_\xi(Q^*)\nabla(\tv + v^*)\bigr)\\
        &\quad - \bigl(\tS_\xi(Q^*)\nabla(\tv + v^*) - \tS_\xi(Q_0)\nabla(\tv + v^*)\bigr).
    \end{aligned}
\end{equation*}
From the proof of \autoref{lem:est highest order F_1}, we recall that $\tz(t) + z^*(t)$, $z^*(t) \in \rX_\gamma$ for all $t \in [0,T]$, so \autoref{lem:Lipschitz est of lc part} is applicable.
As for the estimate of $\rII$ and $\rIII$ in the proof of \autoref{lem:est highest order F_1}, it then follows that
\begin{equation*}
    \| \rii \|_{\F_T^Q} = \bigl\| \bigl(\tS_\xi(\tQ + Q^*) - \tS_\xi(Q^*)\bigr) \nabla(\tv + v^*) \bigr\|_{\rL^p(0,T;\rH^1(\cF_0))} \le C \cdot \| \tz \|_{\BUC([0,T];\rX_\gamma)} \cdot \| z \|_{\E_T} \le C R (R + C_T^*),
\end{equation*}
\begin{equation*}
    \| \riii \|_{\F_T^Q} = \bigl\| \bigl(\tS_\xi(Q^*) - \tS_\xi(Q_0)\bigr) \nabla(\tv + v^*) \bigr\|_{\rL^p(0,T;\rH^1(\cF_0))} \le C (R + C_T^*) \cdot \| z^* - z_0 \|_{\BUC([0,T];\rX_\gamma)}.
\end{equation*}
The last factor tends to zero as $T \to 0$ by \autoref{prop:ref sol}, showing the estimates for $\rii$ and $\riii$.
It remains to deal with the term $\ri$.
To this end, recalling $\cD$ from \eqref{eq:transformed grad}, we expand the difference $\cD_v \coloneqq \cD - \nabla v$ as
\begin{equation}\label{eq:repr diff transformed grad and grad}
    (\cD_v)_{ij} = \sum_{k=1}^3 \sum_{l=1}^3 (\del_l \del_k X_i) (\del_j Y_l) v_k + \sum_{k=1}^3 (\del_k X_i - \delta_{ki}) \sum_{l=1}^3 (\del_l v_k) (\del_j Y_l) + \sum_{l=1}^3 \del_l v_i (\del_j Y_l - \delta_{jl}).
\end{equation}
Making use of H\"older's inequality together with \autoref{lem:ests of trafo and related terms} in order to estimate the second derivatives of $X$ and the differences $\del_k X_i - \delta_{ki}$ and $\del_j Y_l - \delta_{jl}$ by $T(R + C_0 + C_T^*)$, we infer that
\begin{equation}\label{eq:est of diff transformed grad and grad}
    \| \cD_v \|_{\rL^p(0,T;\rH^1(\cF_0))} \le C T (R + C_0 + C_T^*) (R + C_T^*),
\end{equation}
and likewise for $\cD_v^\top = \cD^\top - \nabla v^\top$.
We recall the respective terms from \autoref{sec:change of var} and get
\begin{equation*}
    \begin{aligned}
        \ri = -\frac{1}{2}\bigl([Q,\cD_v] - [Q,\cD_v^\top]\bigr) + \xi \Bigl(\frac{1}{3}\bigl(\cD_v + \cD_v^\top\bigr) + \frac{1}{2}\bigl(\{Q,\cD_v\} + \{Q,\cD_v^\top\}\bigr) - 2 (Q + \nicefrac{\Id_3}{3}) \tr(Q \cD_v)\Bigr).
    \end{aligned}
\end{equation*}
Observe that every addend contains a factor of the form $\cD - \nabla v$ or $\cD^\top - \nabla v^\top$, so \eqref{eq:est of diff transformed grad and grad} can be used.
For the terms involving products with $Q$, we employ H\"older's inequality and invoke the estimate of~$\| Q \|_{\rL^\infty(0,T;\rW^{1,\infty}(\cF_0))}$ from \eqref{eq:est of del_l Q_ik}.
In conclusion, we find that there exists $C_\ri(R,T)$, with $C_\ri(R,T) \to 0$ as $R \to 0$ and $T \to 0$, such that $\| \ri \|_{\F_T^Q} \le C_\ri(R,T)$, finishing the proof.
\end{proof}

The proposition below asserts that $F_2$ admits the desired estimates.

\begin{prop}\label{prop:ests of F_2}
Consider $p > 4$, and for the reference solution $(z^*,\pi^*)$ from \autoref{prop:ref sol} as well as $(\tz,\tp)$, $(\tz_i,\tp_i) \in \cK_T^R$, $i=1,2$, let $(z,\pi) = (\tz + z^*,\tp + \pi^*)$ and $(z_i,\pi_i) = (\tz_i + z^*,\tp_i + \pi^*)$. 
Denote by $\Cmr > 0$ the $T$-independent constant from \autoref{thm:lin theory Q-tensor interaction}(a).
Then there exist $C_2(R,T)$, $L_2(R,T) > 0$ with $C_2(R,T) < \nicefrac{R}{8 \Cmr}$ for $T > 0$ sufficiently small and $L_2(R,T) \to 0$ as $R \to 0$ and $T \to 0$, and with
\begin{equation*}
    \| F_2(\tz) \|_{\F_T^Q} \le C_2(R,T) \tand \| F_2(\tz_1) - F_2(\tz_2) \|_{\F_T^Q} \le L_2(R,T) \cdot \| \tz_1 - \tz_2 \|_{\E_T}.
\end{equation*}
\end{prop}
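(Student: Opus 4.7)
The plan is to break $F_2$ into its constituent terms according to \eqref{eq:rhs F_1 to F_4} and estimate each in $\F_T^Q = \rL^p(0,T;\rH^1(\cF_0,\bS_0^3))$ separately, aiming to absorb everything into a function $C_2(R,T)$ that vanishes with $T$ (for fixed $R$) or with $R$ and $T$ jointly. The nontrivial highest-order piece $\cS(v,Q) + \tS_\xi(Q_0)\nabla v$ has already been handled in \autoref{lem:est highest order F_2}, and the terms $(\cL_2 - \Delta)Q$ and $(\del_t Y \cdot \nabla)Q$ are controlled by \autoref{lem:prep ests nonlin terms}(a). It thus remains to treat the shift $\mu Q$, the convective term $(v \cdot \nabla)Q$ and the polynomial lower-order terms $Q^2 - \tr(Q^2)\nicefrac{\Id_3}{3} - \tr(Q^2)Q$.

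For $\mu Q$, the bound $\| \mu Q \|_{\F_T^Q} \le \mu T^{\nicefrac{1}{p}}\| Q \|_{\rL^\infty(0,T;\rH^1(\cF_0))} \le C \mu T^{\nicefrac{1}{p}}(R + C_0 + C_T^*)$ follows from \autoref{lem:ests of max reg space}(b) together with \autoref{lem:est by init data and norm of sol with appl to ref sol}, which tends to $0$ as $T \to 0$. For the convective term, I expand the product and bound
\begin{equation*}
    \| (v \cdot \nabla) Q \|_{\rL^p(0,T;\rH^1(\cF_0))} \le C \cdot \| v \|_{\rL^\infty(0,T;\rW^{1,\infty}(\cF_0))} \cdot \| Q \|_{\rL^p(0,T;\rH^2(\cF_0))},
\end{equation*}
where the first factor is controlled via the embedding $\rX_\gamma \hookrightarrow \rC^1$ in \autoref{lem:ests of max reg space}(b) since $p > 4$, and the second factor is estimated by $C T^\eta (R + C_T^*)$ exactly as in the proof of \autoref{lem:est highest order F_1} (with $\eta \in (0,\nicefrac{1}{2})$ obtained from a Sobolev slack). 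For the polynomial terms, I use that $\rH^2(\cF_0)$ is a Banach algebra in dimension three and the embedding $\E_T^Q \hookrightarrow \rL^\infty(0,T;\rH^2(\cF_0))$ from \autoref{lem:ests of max reg space}(a), yielding for instance $\| Q^2 \|_{\rL^p(0,T;\rH^1(\cF_0))} \le C T^{\nicefrac{1}{p}}\| Q \|_{\rL^\infty(0,T;\rH^2(\cF_0))}^2$ and similarly for the cubic term.

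Summing the contributions produces the self-map bound with $C_2(R,T)$ of the prescribed form, and for $T > 0$ sufficiently small (depending on $R$), $C_2(R,T) < \nicefrac{R}{8\Cmr}$. The Lipschitz estimate is derived by the same decomposition applied to the difference $F_2(\tz_1) - F_2(\tz_2)$: for the highest-order piece this uses the bilinearity exploited in \autoref{lem:est highest order F_2} (splitting factors analogously to the proof there), while for the polynomial terms one writes $Q_1^2 - Q_2^2 = (Q_1 - Q_2) Q_1 + Q_2 (Q_1 - Q_2)$ and similarly for the cubic term, and for the convective contribution one splits $(v_1 \cdot \nabla) Q_1 - (v_2 \cdot \nabla) Q_2 = ((v_1 - v_2) \cdot \nabla) Q_1 + (v_2 \cdot \nabla)(Q_1 - Q_2)$.

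The main obstacle is ensuring the $\mu$-shift term $\mu Q$ does not spoil the smallness: unlike the genuinely nonlinear contributions, it is linear in $Q$ and does not automatically produce a factor of $R$ or $R + C_T^*$. The resolution is that the constant $C_0 = \| z_0 \|_{\rX_\gamma}$ is fixed once the initial data are fixed, so the factor $T^{\nicefrac{1}{p}}$ still enforces $\mu C T^{\nicefrac{1}{p}}(R + C_0 + C_T^*) \to 0$ as $T \to 0$, and the Lipschitz contribution $\mu T^{\nicefrac{1}{p}}$ in $\E_T \to \F_T^Q$ is likewise small; this pattern is already implicit in the structure of \autoref{prop:ests of F_1}, and no new ingredients beyond \autoref{lem:ests of max reg space}, \autoref{lem:est by init data and norm of sol with appl to ref sol} and \autoref{lem:est highest order F_2} are required.
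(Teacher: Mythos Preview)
Your overall decomposition matches the paper's, and your treatment of the shift term $\mu Q$ and the polynomial lower-order terms is correct (for the latter you use the $\rH^2$-algebra property, whereas the paper uses $\| Q \|_{\rL^\infty(0,T;\rW^{1,\infty})}$ from \eqref{eq:est of del_l Q_ik}; both work).

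There is, however, a genuine gap in your estimate of the convective term $(v\cdot\nabla)Q$. You bound
\[
\| (v \cdot \nabla) Q \|_{\rL^p(0,T;\rH^1(\cF_0))} \le C \, \| v \|_{\rL^\infty(0,T;\rW^{1,\infty}(\cF_0))} \cdot \| Q \|_{\rL^p(0,T;\rH^2(\cF_0))}
\]
and claim the first factor is controlled via ``the embedding $\rX_\gamma \hookrightarrow \rC^1$'' from \autoref{lem:ests of max reg space}(b). But that $\rC^1$-embedding holds only for the $Q$-component of $\rX_\gamma$: the velocity component lies in $\rB_{2p}^{2-\nicefrac{2}{p}}(\cF_0)$, which for $p>4$ embeds into $\rC(\overline{\cF_0})$ but \emph{not} into $\rC^1(\overline{\cF_0})$ (one would need $2-\nicefrac{2}{p}>\nicefrac{5}{2}$). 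In the present $\rL^2$-based setting the maximal regularity space for $v$ gives at best $\rL^p(0,T;\rH^2)\hookrightarrow \rL^p(0,T;\rW^{1,6})$ and $\rL^\infty(0,T;\rC^0)$, so $\| v \|_{\rL^\infty(0,T;\rW^{1,\infty})}$ is simply not available, and the cross term $\nabla v\cdot\nabla Q$ arising in $\|\nabla((v\cdot\nabla)Q)\|_{\rL^2}$ cannot be closed this way.

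The paper repairs this via a genuine H\"older splitting: using the mixed derivative embedding $\E_T^v \hookrightarrow \rH^{\theta,p}(0,T;\rH^{2(1-\theta)}) \hookrightarrow \rL^q(0,T;\rW^{1,2s})$ for some $s<3$ and $q>p$ to place $v$ in $\rL^p(0,T;\rW^{1,2s})$ (with an extra $T^{\nicefrac{1}{p}-\nicefrac{1}{q}}$), and the embedding $\rB_{2p}^{3-\nicefrac{2}{p}}\hookrightarrow \rW^{2,2s'}$ (valid since $p>4$ allows $s'>\nicefrac{3}{2}$) to place $\nabla Q$ in $\rL^\infty(0,T;\rW^{1,2s'})$. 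Then H\"older with $\nicefrac{1}{2s}+\nicefrac{1}{2s'}=\nicefrac{1}{2}$ closes the $\rH^1$-estimate. You need to replace your $\rW^{1,\infty}$ bound by this argument (or an equivalent one), both for the self-map and the Lipschitz estimate.
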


\begin{proof}
In view of \autoref{lem:prep ests nonlin terms}, \autoref{lem:est highest order F_2} and the fact that the estimates of $\mu Q$ are simple, we still need to estimate $((\tv + v^*) \cdot \nabla)(\tQ + Q^*)$, $(\tQ + Q^*)^2 - \tr((\tQ + Q^*)^2)\nicefrac{\Id_3}{3}$ and $-\tr((\tQ + Q^*)^2) (\tQ + Q^*)$.

First, we discuss estimates of powers of $Q$.
For $m \ge 2$, recalling the estimate of $\| Q \|_{\rL^\infty(0,T;\rW^{1,\infty}(\cF_0))}$ from \eqref{eq:est of del_l Q_ik}, and using that \autoref{lem:ests of max reg space}(b) implies $\| Q \|_{\rL^p(0,T;\rH^1(\cF_0))} \le C_T^* + T^{\nicefrac{1}{p}} R$, we obtain
\begin{equation}\label{eq:est of power}
    \| Q^m \|_{\rL^p(0,T;\rH^1(\cF_0))} \le \| Q \|_{\rL^\infty(0,T;\rW^{1,\infty}(\cF_0))}^{m-1} \cdot \| Q \|_{\rL^p(0,T;\rH^1(\cF_0))} \le C(C_0 + R + C_T^*)^{m-1}(C_T^* + T^{\nicefrac{1}{p}}R).
\end{equation}
As $\tr((\tQ + Q^*)^2)\nicefrac{\Id_3}{3}$ can be handled similarly, we get $\| Q^2 - \tr(Q^2)\nicefrac{\Id_3}{3} \|_{\F_T^Q} \le C(C_0 + R + C_T^*)(C_T^* + T^{\nicefrac{1}{p}}R)$.

Concerning the estimate of $(v \cdot \nabla)Q$, we note that $\rH^{\theta,p}\bigl(0,T;\rH^{2(1-\theta)}(\cF_0)\bigr) \hookrightarrow \rL^q\bigl(0,T;\rW^{1,2s}(\cF_0)\bigr)$ holds for $q > p$ and $\theta \in (\nicefrac{1}{p}-\nicefrac{1}{q},\nicefrac{3}{4s} - \nicefrac{1}{4})$, requiring $s < 3$, or, equivalently, $s' > \nicefrac{3}{2}$ for $\nicefrac{1}{s} + \nicefrac{1}{s'} = 1$. 
For such~$s$, \autoref{lem:ests of max reg space}(a) yields $\| v \|_{\rL^p(0,T;\rW^{1,2s}(\cF_0))} \le T^\eta \cdot \| \tv \|_{\rL^q(0,T;\rW^{1,2s}(\cF_0))} + \| z^* \|_{\E_T} \le C T^\eta R + C_T^*$ for $\eta \coloneqq \nicefrac{1}{p} - \nicefrac{1}{q}$.
On the other hand, thanks to $p > 4$, there is $s' > \nicefrac{3}{2}$ such that $\rB_{2p}^{3-\nicefrac{2}{p}}(\cF_0) \hookrightarrow \rW^{2,2s'}(\cF_0)$.
By \autoref{lem:ests of max reg space}(b), and with $\| z \|_{\BUC([0,T];\rX_\gamma)} \le C_0 + R + C_T^*$, we get $\| \nabla Q \|_{\rL^\infty(0,T;\rW^{1,2s'}(\cF_0))} \le C(C_0 + R + C_T^*)$.
H\"older's inequality and a combination of the above estimates then results in
\begin{equation*}
    \| (v \cdot \nabla) Q \|_{\F_T^Q} \le \| v \|_{\rL^p(0,T;\rW^{1,2s}(\cF_0))} \cdot \| \nabla Q \|_{\rL^\infty(0,T;\rW^{1,2s'}(\cF_0))} \le C(C_0 + R + C_T^*) (T^\eta R + C_T^*).
\end{equation*}

Let us observe that the Lipschitz estimate can be obtained in a similar way.
\end{proof}

Finally, we estimate the terms $F_3$ and $F_4$.
The complexity of the stress tensor and the present anisotropic Hilbert space setting renders renders this task challenging.

\begin{prop}\label{prop:ests of F_3 and F_4}
Let $p > 4$, recall the reference solution $(z^*,\pi^*)$ from \autoref{prop:ref sol}, and for $(\tz,\tp)$, $(\tz_i,\tp_i) \in \cK_T^R$, $i=1,2$, set $(z,\pi) = (\tz + z^*,\tp + \pi^*)$, $(z_i,\pi_i) = (\tz_i + z^*,\tp_i + \pi^*)$.
For $\Cmr > 0$ from \autoref{thm:lin theory Q-tensor interaction}(a), there are $C_3(R,T)$, $C_4(R,T) > 0$, with $C_3(R,T)$, $C_4(R,T) < \nicefrac{R}{8 \Cmr}$ for $T > 0$ small enough, and $L_3(R,T)$, $L_4(R,T) > 0$, with $L_3(R,T)$, $L_4(R,T) \to 0$ as $R \to 0$, $T \to 0$, so that
\begin{equation*}
    \begin{aligned}
        \| F_3(\tz,\tp) \|_{\F_T^\ell} 
        &\le C_3(R,T), \enspace \| F_3(\tz_1,\tp_1) - F_3(\tz_2,\tp_2) \|_{\F_T^\ell} \le L_3(R,T) \cdot \| (\tz_1,\tp_1) - (\tz_2,\tp_2) \|_{\tE_T} \taswellas\\
        \| F_4(\tz,\tp) \|_{\F_T^\omega} 
        &\le C_4(R,T), \enspace \| F_4(\tz_1,\tp_1) - F_4(\tz_2,\tp_2) \|_{\F_T^\omega} \le L_4(R,T) \cdot \| (\tz_1,\tp_1) - (\tz_2,\tp_2) \|_{\tE_T}.
    \end{aligned}
\end{equation*}
\end{prop}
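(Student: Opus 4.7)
\medskip

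\noindent\textit{Proof proposal.} The plan is to decompose both $F_3$ and $F_4$ into four contributions and estimate each in $\rL^p(0,T)^3$ by a constant that tends to zero as $T \to 0$ (for the self-map bound, with $R$ fixed) or as $R, T \to 0$ (for the Lipschitz bound). Explicitly, I split $F_3 = \mu\ell - \mS(\omega \times \ell) + \int_{\del\cS_0}(\rT - \cT)(v,\pi,Q) n\rd\Gamma - \int_{\del\cS_0}\bO(t)^\top(\tau_\rr(Q) + \sigma(Q))\bO(t) n\rd\Gamma$, and similarly for $F_4$ up to an extra bounded factor $y$ on $\del\cS_0$. The first three pieces are comparatively quick: $\|\mu\ell\|_{\rL^p(0,T)} \le \mu(C T^{\nicefrac{1}{p}} R + C_T^*)$ via $\tell(0) = 0$ and $\|\ell^*\|_{\rL^p(0,T)} \le C_T^*$; the bilinear rigid-body term is handled by $\rW^{1,p}(0,T) \hookrightarrow \rL^\infty(0,T)$, \autoref{lem:est by init data and norm of sol with appl to ref sol} and H\"older in time, giving $\|\omega \times \ell\|_{\rL^p(0,T)} \le C T^{\nicefrac{1}{p}}(C_0 + R + C_T^*)^2$; the transformed Cauchy-stress difference is bounded by $C T(R + C_T^*)$ thanks to \autoref{lem:prep ests nonlin terms}(b) applied to $\cJ((\rT - \cT)(v,\pi,Q))$.

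The hard part will be the nonlinear surface integral $\int_{\del\cS_0} \bO(t)^\top(\tau_\rr(Q) + \sigma(Q))\bO(t) n\rd\Gamma$. After cancelling the linear piece $-\nicefrac{2\xi}{3}\, H_\rh$ that is absorbed into $\rT$, the integrand is, by \eqref{eq:tau and sigma} and \eqref{eq:notation for Q-tensor}, a finite sum of products of type $Q^k$, $Q^k \Delta Q$, $\Delta Q \cdot Q^k$, $(Q + \nicefrac{\Id_3}{3})\tr(Q^j\Delta Q)$ ($j,k \ge 1$), and $\nabla Q \odot \nabla Q$. For $p > 4$, the embedding $\E_T \hookrightarrow \BUC([0,T]; \rX_\gamma)$ of \autoref{lem:ests of max reg space}(b), together with $\rB_{2p}^{3 - \nicefrac{2}{p}}(\cF_0) \hookrightarrow \rC^1(\overline{\cF_0}) \cap \rH^{\nicefrac{5}{2} + \delta}(\cF_0)$ for some $\delta > 0$ and the trace theorem, supplies
\begin{equation*}
\|Q\|_{\rL^\infty(0,T;\rC^1(\overline{\cF_0}))} + \|\nabla Q\|_{\rL^\infty(0,T;\rL^\infty(\del\cS_0))} + \|\Delta Q\|_{\rL^\infty(0,T;\rL^2(\del\cS_0))} \le C(C_0 + R + C_T^*).
\end{equation*}
The strategy is to place the $\rL^p$-in-time norm on one factor with $\rL^\infty$-in-space control and keep the remaining factors in $\rL^\infty$-time, harvesting a $T^{\nicefrac{1}{p}}$ from H\"older; e.g.\ for the worst term $Q \Delta Q$,
\begin{equation*}
\Bigl\|\int_{\del\cS_0} Q \Delta Q \, n \rd\Gamma\Bigr\|_{\rL^p(0,T)} \le C\|Q\|_{\rL^p(0,T;\rL^\infty(\del\cS_0))} \|\Delta Q\|_{\rL^\infty(0,T;\rL^1(\del\cS_0))} \le C T^{\nicefrac{1}{p}}(C_0 + R + C_T^*)^2.
\end{equation*}
The polynomial contributions, $\nabla Q \odot \nabla Q$, and the trace-type terms $(Q + \nicefrac{\Id_3}{3})\tr(Q^j \Delta Q)$ fall under the same scheme; boundedness of the rotation factors $\bO(t)^\top \cdots \bO(t)$ follows from \autoref{lem:ests of trafo and related terms}(b). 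Summing all contributions and using $C_T^* \to 0$ as $T \to 0$ from \autoref{prop:ref sol} yields $C_3(R,T) < \nicefrac{R}{8 \Cmr}$ for $T$ sufficiently small, and the argument for $F_4$ is identical with the harmless bounded factor $y$ on $\del\cS_0$.

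For the Lipschitz estimates, I would run the same decomposition on differences. Bilinearity writes every quadratic (or higher) term $P(z_1) - P(z_2)$ as a sum of products in which one factor carries $z_1 - z_2$---controlled by $\|(\tz_1,\tp_1) - (\tz_2,\tp_2)\|_{\tE_T}$ in exactly the norms used above---while the remaining factors are uniformly bounded by $C(C_0 + R + C_T^*)^{k-1}$. The small time factor ($T^{\nicefrac{1}{p}}$, $T$, or $C_T^*$) from the self-map step persists and delivers $L_3, L_4 \to 0$ as $R, T \to 0$.
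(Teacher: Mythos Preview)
Your proof is correct and follows the same overall skeleton as the paper: you split $F_3$ and $F_4$ into the rigid-body terms, the $(\rT-\cT)$ contribution handled via \autoref{lem:prep ests nonlin terms}(b), and the nonlinear surface integral of $\tau_\rr(Q)+\sigma(Q)$; you use the boundedness of $\bO$ from \autoref{lem:ests of trafo and related terms}(b) and the embedding \autoref{lem:ests of max reg space}(b); and the Lipschitz estimates follow by multilinear expansion.

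The one genuine difference lies in how the surface integral of the nonlinear stress is estimated. You exploit that $p>4$ gives $\rB_{2p}^{3-\nicefrac{2}{p}}(\cF_0)\hookrightarrow \rC^1(\overline{\cF_0})\cap\rH^{\nicefrac{5}{2}+\delta}(\cF_0)$, so that $Q,\nabla Q$ sit in $\rL^\infty(\del\cS_0)$ and $\Delta Q$ in $\rL^2(\del\cS_0)$ uniformly in time; then a single elementary H\"older-in-time step produces the factor $T^{\nicefrac{1}{p}}$. The paper instead treats the representative terms $\nabla Q\odot\nabla Q$, $Q\tr(Q\Delta Q)$ and $Q\tr(Q^2)^2$ separately, placing the low-order $Q$-factors in $\rW^{\nicefrac{1}{4}+\eps,4}(\cF_0)$ or $\rW^{\nicefrac{1}{8}+\eps,8}(\cF_0)$ via \autoref{lem:ests of max reg space}(b), and extracting the time decay through the mixed derivative embedding of \autoref{lem:ests of max reg space}(a), namely $\E_T^Q\hookrightarrow \rL^q(0,T;\rH^{\nicefrac{5}{2}+\eps}(\cF_0))$ for $q>p$, which yields a factor $T^{\eta}$ with $\eta=\nicefrac{1}{p}-\nicefrac{1}{q}$. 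Your route is more elementary---it avoids the mixed derivative theorem entirely---at the cost of a constant that carries $C_0$ rather than isolating the $T^{\nicefrac{1}{p}}R+C_T^*$ structure the paper obtains; for the purposes of the proposition (smallness for $T\to 0$ with $R$ fixed, and Lipschitz constant vanishing as $R,T\to 0$) this makes no difference.
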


\begin{proof}
Note that the estimates of $\omega \times \ell$, $\omega \times (J_0 \omega)$, $\mu \ell$ and $\mu \omega$ are straightforward, while the estimates of the Cauchy part of the stress tensor follow from \autoref{lem:prep ests nonlin terms}(b).
Hence, it remains to handle the surface integral terms involving $\bO(t)^\top(\tau_{\rr}(Q) + \sigma(Q))\bO(t)$, where the most difficult terms are $\nabla Q \odot \nabla Q$, $2 \xi Q \tr(Q \Delta Q)$ and $2 \xi Q \tr(Q^2) \tr(Q^2)$.
The study of the other terms can be reduced to the latter ones.

The continuity of the trace from $\rH^{\nicefrac{1}{2}+\eps}(\cF_0)$ to $\rL^2(\del\cS_0)$, $\eps > 0$, the bound of $\| \bO \|_\infty$ by \autoref{lem:ests of trafo and related terms}(b) and the embedding from \autoref{lem:ests of max reg space}(b) together with $\rB_{2p}^{3-\nicefrac{2}{p}}(\cF_0) \hookrightarrow \rH^{\nicefrac{3}{2} + \eps}(\cF_0)$ first yield
\begin{equation*}
    \begin{aligned}
        \left \| \int_{\del\cS_0} (\bO^\top \nabla Q \odot \nabla Q \bO) n \rd \Gamma \right\|_p
        &\le C \cdot \| \tQ + Q^* \|_{\rL^\infty(0,T;\rH^{\nicefrac{3}{2}+\eps}(\cF_0))} \cdot \| \tQ + Q^* \|_{\rL^p(0,T;\rH^{\nicefrac{3}{2}+\eps}(\cF_0))}\\
        &\le C\left(\| \tz \|_{\E_T} + \| z^* \|_{\BUC([0,T];\rX_\gamma)}\right) \left(T^{\nicefrac{1}{p}} \cdot \| \tQ \|_{\rL^\infty(0,T;\rH^{\nicefrac{3}{2}+\eps}(\cF_0))} + \| z^* \|_{\E_T}\right)\\
        &\le C(R + C_T^* + C_0) (T^{\nicefrac{1}{p}}R + C_T^*).
    \end{aligned}
\end{equation*}

We proceed with the estimate of $2 \xi Q \tr(Q \Delta Q)$, and as $\xi \in \R$ is fixed, we omit it in the sequel.
Similarly as above, using $\rB_{2p}^{3-\nicefrac{2}{p}}(\cF_0) \hookrightarrow \rW^{\nicefrac{1}{4}+\eps,4}(\cF_0)$ for the first two factors, and employing \autoref{lem:ests of max reg space}(a) joint with $\rH^{\theta,p}\bigl(0,T;\rH^{1+2(1-\theta)}(\cF_0)\bigr) \hookrightarrow \rL^q\bigl(0,T;\rH^{\nicefrac{5}{2}+\eps}(\cF_0)\bigr)$ for $q > p$ and $\theta \in (\nicefrac{1}{p} - \nicefrac{1}{q},\nicefrac{1}{4})$, we get
\begin{equation*}
    \begin{aligned}
        \left \| \int_{\del\cS_0} (\bO^\top Q \tr(Q \Delta Q) \bO) n \rd \Gamma \right\|_p
        &\le C(R + C_T^* + C_0)^2 \left(T^\eta \cdot \| \tQ \|_{\rL^q(0,T;\rH^{\nicefrac{5}{2}+\eps}(\cF_0))} + \| z^* \|_{\E_T}\right)\\
        &\le (R + C_T^* + C_0)^2 (T^\eta R + C_T^*),
    \end{aligned}
\end{equation*}
where $\eta = \nicefrac{1}{p} - \nicefrac{1}{q} > 0$.
The last term is $2 \xi Q \tr(Q^2) \tr(Q^2)$, and we observe that $Q \tr(Q^2) = Q |Q|_2^4$ for the Hilbert-Schmidt norm $|\cdot|_2$.
The resulting first four factors can be treated as above upon noting that~$\rB_{2p}^{3-\nicefrac{2}{p}}(\cF_0) \hookrightarrow \rW^{\nicefrac{1}{8}+\eps,8}(\cF_0)$.
The last factor can be handled as in the preceding estimate, so
\begin{equation*}
    \left \| \int_{\del\cS_0} (\bO^\top Q \tr(Q^2) \tr(Q^2) \bO) n \rd \Gamma \right\|_p \le C(R + C_T^* + C_0)^4 (T^\eta R + C_T^*).
\end{equation*}

Similar arguments yield the Lipschitz estimate of $F_3$ by the multilinear structure of the terms.
Finally, the estimates of the surface integrals for $F_4$ are completely analogous to those of $F_3$.
\end{proof}

We are now in the position to show the local strong well-posedness for large data.

\begin{proof}[Proof of \autoref{thm:loc strong wp}]
We start with the proof of \autoref{thm:loc strong wp in ref config}, so let $(\tz,\tp) \in \cK_T^R$.
\autoref{thm:lin theory Q-tensor interaction}(a) applied to \eqref{eq:linearized syst with hom initial data} together with \autoref{prop:ests of F_1}, \autoref{prop:ests of F_2} and \autoref{prop:ests of F_3 and F_4} results in 
\begin{equation*}
    \| \Phi_T^R(\tz,\tp) \|_{\tE_T} \le \Cmr \cdot \| (F_1(\tz,\tp),F_2(\tz),F_3(\tz,\tp),F_4(\tz,\tp)) \|_{\F_T} \le \Cmr C(R,T).
\end{equation*}
For given $R > 0$, we get $C(R,T) < \nicefrac{R}{2 \Cmr}$ provided $T > 0$ is sufficiently small.
In the same way,
\begin{equation*}
    \| \Phi_T^R(\tz_1,\tp_1) - \Phi_T^R(\tz_2,\tp_2) \|_{\tE_T} \le \Cmr L(R,T) \cdot \| (\tz_1,\tp_1) - (\tz_2,\tp_2) \|_{\tE_T}, \tfor (\tz_1,\tp_1), \, (\tz_2,\tp_2) \in \cK_T^R.
\end{equation*}
As $\Cmr > 0$ is $T$-independent by virtue of the homogeneous initial values, we may choose $R > 0$ and then also $T > 0$ sufficiently small such that $\Cmr C(R,T) \le \nicefrac{R}{2}$ and $\Cmr L(R,T) \le \nicefrac{1}{2}$.
In other words, the solution map $\Phi_T^R$ to the linearized problem \eqref{eq:linearized syst with hom initial data} is a self-map and contraction, giving rise to a unique fixed point $(\hz,\hp) \in \prescript{}{0}{\E_T} \times \E_T^{\pi}$.
An addition of the reference solution $(z^*,\pi^*)$ from \autoref{prop:ref sol} leads to the unique solution $(z,\pi) = (\hz + z^*,\hp + \pi^*) \in \tE_T$ to \eqref{eq:cv1}--\eqref{eq:cv3}.
The proof is completed upon noting that~$(z,\pi) \in \tE_T$ means that $(v,Q,\ell,\omega,\pi)$ is in the regularity class as stated in \autoref{thm:loc strong wp in ref config}.

Finally, we deduce \autoref{thm:loc strong wp} from \autoref{thm:loc strong wp in ref config}.
Thus, consider $(\ell,\omega) \in \rW^{1,p}(0,T)^6$ resulting from \autoref{thm:loc strong wp in ref config}.
We recover $h'$, $\Omega$ and $X$ from there.
The backward change of variables from \autoref{sec:change of var} yields the assertion of \autoref{thm:loc strong wp}, where the uniqueness follows from the uniqueness of the diffeomorphism.
\end{proof}

\section{Global strong well-posedness for small data}\label{sec:proof of glob strong wp}

The purpose of this section is to prove \autoref{thm:global strong wp for small data} on the global strong well-posedness of the interaction problem \eqref{eq:LC-fluid equationsiso}--\eqref{eq:initial} for small initial data.
To this end, we set up a fixed point argument based on the maximal $\rL^p(\R_+)$-regularity of the fluid-structure operator, and we derive new decay estimates of the coordinate transform in order to estimate the nonlinear terms.

In view of \autoref{thm:lin theory Q-tensor interaction}(b), we consider here the linearization at zero, without shift, and without invoking a reference solution thanks to the maximal $\rL^p(\R_+)$-regularity, i.\ e., we take into account
\begin{equation}\label{eq:lin Q-tensor for global for small data}
\left\{
    \begin{aligned}
        \partial_t {v}  -\Delta {v} +\nabla \pi - \mdiv S_\xi(0)(\Delta - \Id_3)Q &= G_1(z,\pi), \enspace \mdiv {v} = 0, &&\tin (0,T) \times \cF_0,\\
        {\partial_t {Q}} + \tS_\xi(0) \nabla v  -(\Delta - \Id_3) {Q} 
        &= G_2(z), &&\tin (0,T) \times \cF_0,\\
        \mS({\ell})' + \int_{\del \cS_0} \rT(v,\pi,Q) n \rd \Gamma &= G_3(z,\pi), &&\tin (0,T),\\ 
        J_0({\omega})' + \int_{\del \cS_0} y \times \rT(v,\pi,Q) n \rd \Gamma &= G_4(z,\pi), &&\tin (0,T),\\
        {v} ={\ell}+{\omega}\times y, \ton (0,T) \times \partial\cS_0, \enspace
        {v} &=0, \ton (0,T) \times \partial \cO, \enspace
        \partial_n Q = 0, &&\ton (0,T) \times \partial \cF_0,\\
        v(0) = v_0, \enspace Q(0) = Q_0, \enspace \ell(0) &= \ell_0 \tand \omega(0) = \omega_0. 
    \end{aligned}
\right.
\end{equation}
For the transformed terms as made precise in \autoref{sec:change of var}, the terms on the right-hand side in \eqref{eq:lin Q-tensor for global for small data} are
\begin{equation}\label{eq:rhs G_1 to G_4}
    \begin{aligned}
        G_1(z,\pi)
        &\coloneqq (\cL_1 - \Delta)v - \cM v - \cN(v) - (\cG - \nabla)\pi + \cB_{\tau}(Q) + \cB_{\sigma}(Q) - \mdiv S_\xi(0)(\Delta - \Id_3)Q,\\
        G_2(z)
        &\coloneqq (\cL_2 - \Delta)Q - (\del_t Y \cdot \nabla) Q - (v \cdot \nabla)Q + \cS(v,Q) + \tS_\xi(0)\nabla v + Q^2 - \tr(Q^2)(\nicefrac{\Id_3}{3} + Q),\\
        G_3(z,\pi)
        &\coloneqq -\mS \omega \times \ell + \int_{\del \cS_0}\bigl((\rT - \cT)(v,\pi,Q) - \bO(t)^\top(\tau_{\rr}(Q) + \sigma(Q))\bO(t)\bigr) n \rd \Gamma \tand\\
        G_4(z,\pi)
        &\coloneqq \omega \times (J_0 \omega) + \int_{\del \cS_0} y \times \bigl((\rT - \cT)(v,\pi,Q) - \bO(t)^\top(\tau_{\rr}(Q) + \sigma(Q))\bO(t)\bigr) n \rd \Gamma.
    \end{aligned}
\end{equation}
For $\eta \in (0,\eta_0)$, where $\eta_0 > 0$ has been introduced in \autoref{thm:lin theory Q-tensor interaction}(b), and $\tE_\infty$ as made precise in \eqref{eq:max reg space with pressure}, we define $\tcK$ to be $\tE_\infty$ with exponential weights, i.\ e., $(z,\pi) = (v,Q,\ell,\omega,\pi) \in \tcK$ if and only if
\begin{equation}\label{eq:def of tcK}
    \mre^{\eta(\cdot)}(v,Q,\ell,\omega,\pi) \in \tE_\infty, \twith \| (v,Q,\ell,\omega,\pi) \|_{\tcK} \coloneqq \bigl\| \bigl(\mre^{\eta(\cdot)} v, \mre^{\eta(\cdot)} Q, \mre^{\eta(\cdot)} \ell, \mre^{\eta(\cdot)} \omega, \mre^{\eta(\cdot)} \pi\bigr) \bigr\|_{\tE_\infty}.
\end{equation}
For $\eps > 0$, the ball with center zero and radius $\eps$ in $\tcK$ is denoted by $\tcK_\eps$, and we define $\Psi \colon \tcK_\eps \to \tcK$ to be the solution map to \eqref{eq:lin Q-tensor for global for small data}.
This means that given $(z,\pi) = (v,Q,\ell,\omega,\pi) \in \tcK_\eps$, we denote by $\Psi(z,\pi)$ the solution to \eqref{eq:lin Q-tensor for global for small data} with $G_1(z,\pi)$, $G_2(z)$, $G_3(z,\pi)$ and $G_4(z,\pi)$.
By \autoref{thm:lin theory Q-tensor interaction}(b), the map $\Psi$ is well-defined if the terms on the right-hand side lie in $\mre^{-\eta(\cdot)} \F_\infty \coloneqq \{z = (v,Q,\ell,\omega) \in \F_\infty : \mre^{\eta(\cdot)} z \in \F_\infty\}$, where~$\F_\infty$ is defined in \eqref{eq:data space}, and $z_0 = (v_0,Q_0,\ell_0,\omega_0) \in \rX_\gamma$, with $\rX_\gamma$ from \eqref{eq:X_gamma}.
Throughout the section, we use $\| \cdot \|_\infty$ to denote the $\rL^\infty(0,\infty)$-norm, while~$\| \cdot \|_{\infty,\infty}$ represents the norm of $\rL^\infty(0,\infty;\rL^\infty(\cF_0))$.

In the lemmas below, we collect important ingredients for the nonlinear estimates.
The first one discusses estimates of terms related to the transform.
\autoref{lem:aux ests trafo global}(a)--(c) and~(e) follow from \cite[Lemma~7.3 and Proposition~7.4]{BBHR:23}.
Concerning \autoref{lem:aux ests trafo global}(d), recall from \cite[(7.17)]{BBHR:23} the estimate $\| \del^\beta b_i \|_{\infty,\infty} \le C \mre^{-\eta t} \eps$.
In view of \eqref{eq:IVP X}, this yields $\| \rJ_X - \Id_3 \|_{\rL^\infty(0,\infty;\rW^{1,\infty}(\cF_0))} \le C \eps$ as well as~$\| \rJ_X \|_{\rL^\infty(0,\infty;\rW^{1,\infty}(\cF_0))} \le C$.
The estimates of the second derivatives, of the terms involving~$Y$ and of the other terms follow in a similar manner with regard to \eqref{eq:contrvar covar and Christoffel}.

\begin{lem}\label{lem:aux ests trafo global}
Recall $X$, $Y$ as well as $g^{ij}$, $g_{ij}$ and $\Gamma_{jk}^i$ from \autoref{sec:change of var}, and let $(v,Q,\ell,\omega,\pi) \in \tcK_\eps$.
Then
\begin{enumerate}[(a)]
    \item there exists an $\eps$-independent constant $C > 0$ with $\| \rJ_X - \Id_3 \|_{\infty,\infty} \le C \eps$, so if $\eps < \nicefrac{1}{2C}$, then $\rJ_X$ is invertible on $(0,\infty) \times \cF_0$,
    \item there is $C > 0$ with $\| \rJ_X \|_{\infty,\infty}$, $\| \rJ_Y \|_{\infty,\infty}$, $\| g^{ij} \|_{\infty,\infty}$, $\| g_{ij} \|_{\infty,\infty} \le C$ for all $i,j,k \in \{1,2,3\}$,
    \item there exists $C > 0$ so that for all $i,j,k \in \{1,2,3\}$, we have
    \begin{equation*}
        \begin{aligned}
            &\| \del_t Y \|_{\infty,\infty} + \| \del_j \del_t X_i \|_{\infty,\infty} + \| \rJ_Y - \Id_3 \|_{\infty,\infty} + \| \del_i g^{jk} \|_{\infty,\infty} + \| \del_i g_{jk} \|_{\infty,\infty}\\
            &\quad + \| \Gamma_{jk}^i \|_{\infty,\infty} + \| g^{ij} - \delta_{ij} \|_{\infty,\infty} + \| \del_i \del_j X_k \|_{\infty,\infty} + \| \del_i \del_j Y_k \|_{\infty,\infty} \le C \eps,
        \end{aligned}
    \end{equation*}
    \item the assertions of~(a)--(c) remain valid when $\| \cdot \|_{\infty,\infty}$ is replaced by $\| \cdot \|_{\rL^\infty(0,\infty;\rW^{1,\infty}(\cF_0))}$, and
    \item for $\bO$ and $\bO^\top$, we get $\| \bO \|_\infty$, $\| \bO^\top \|_\infty \le C$ and $\| \bO - \Id_3 \|_\infty$, $\| \bO^\top - \Id_3 \|_\infty \le C \eps$.
\end{enumerate}
\end{lem}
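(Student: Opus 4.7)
The plan is to handle parts~(a), (b), (c) and~(e) by directly invoking the corresponding statements in \cite[Lemma~7.3 and Proposition~7.4]{BBHR:23}, which were established in an essentially identical setting: they provide the exponential-in-time decay of the velocity field~$b$ driving \eqref{eq:IVP X}, uniform bounds on $X$, $Y$ and their first derivatives via the integral form of the characteristic ODE, and the smallness of $\bO - \Id_3$ via the skew-symmetric ODE relating $\bO$ and $m$. Since $(v,Q,\ell,\omega,\pi) \in \tcK_\eps$ controls $\ell$, $\omega$ with an exponential weight $\mre^{\eta t}$ by a constant times~$\eps$, and $b$ is built from $\ell$, $\omega$, the cut-off $\chi$ and a Bogovski\u{\i} correction, all of these estimates carry over directly.

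The genuinely new content lies in part~(d), where the uniform bounds in (a)--(c) must be upgraded to $\rL^\infty(0,\infty;\rW^{1,\infty}(\cF_0))$-bounds. I would first upgrade \cite[(7.17)]{BBHR:23} by one spatial derivative so as to obtain $\| \del^\beta b \|_{\infty,\infty} \le C\mre^{-\eta t}\eps$ for all multi-indices~$\beta$ with $0 \le |\beta| \le 2$; this uses the exponentially weighted smallness of $(\ell,\omega)$ together with the regularising mapping properties of the Bogovski\u{\i} operator $B_{\cF_0}$. Differentiating~\eqref{eq:IVP X} in~$y$ then yields the linear ODE $\del_t \rJ_X = (\nabla_x b)(t,X)\rJ_X$, and Duhamel's formula combined with Gronwall's inequality, exploiting the exponential decay of $\nabla_x b$, produces $\| \rJ_X - \Id_3 \|_{\infty,\infty} \le C\eps$ and $\| \rJ_X \|_{\infty,\infty} \le C$. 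A further $y$-differentiation yields an analogous linear ODE for $\del_y \rJ_X$ with source involving $\nabla_x^2 b$ and quadratic in $\rJ_X$, and Gronwall again supplies the desired $\rW^{1,\infty}$-control. The $\rW^{1,\infty}$-estimates for $\rJ_Y$ follow from the identity $\rJ_Y(t,x) = \rJ_X(t,Y(t,x))^{-1}$, justified by~(a), via a Neumann series expansion of the inverse once $\eps$ is small enough, and the second derivatives $\del_i\del_j X_k$, $\del_i\del_j Y_k$ are treated by the same scheme. Since $g^{ij}$, $g_{ij}$ and $\Gamma_{jk}^i$ are algebraic expressions in first and second derivatives of $X$ and $Y$ by~\eqref{eq:contrvar covar and Christoffel}, the $\rW^{1,\infty}$-estimates on these quantities follow from those on $\rJ_X$ and $\rJ_Y$ together with the algebra property of $\rW^{1,\infty}$.

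The main technical obstacle I anticipate is the control of the $\rW^{2,\infty}$-in-space norm of $b$ with the correct exponential weight, since $b$ contains the Bogovski\u{\i} correction $B_{\cF_0}(\nabla \chi\,[m(\cdot-h)+h'])$; one must verify that $B_{\cF_0}$ maps sufficiently smooth data into $\rC^2(\overline{\cF_0})$ and that its operator norm does not destroy the exponential weight inherited from $(\ell,\omega)$. Once that bound is in hand, all the Gronwall arguments for the linear ODEs satisfied by $\rJ_X$, $\rJ_Y$ and their first $y$-derivatives are routine, and the assertions of (d) for $g^{ij}$, $g_{ij}$ and $\Gamma_{jk}^i$ reduce to bookkeeping via~\eqref{eq:contrvar covar and Christoffel}.
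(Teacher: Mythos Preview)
Your proposal is correct and follows essentially the same route as the paper: parts~(a)--(c) and~(e) are taken from \cite[Lemma~7.3 and Proposition~7.4]{BBHR:23}, and for part~(d) the paper likewise invokes the decay estimate $\| \del^\beta b_i \|_{\infty,\infty} \le C \mre^{-\eta t}\eps$ from \cite[(7.17)]{BBHR:23}, derives the $\rW^{1,\infty}$-bounds on $\rJ_X$ from the ODE \eqref{eq:IVP X}, and passes to $Y$, $g^{ij}$, $g_{ij}$, $\Gamma_{jk}^i$ via \eqref{eq:contrvar covar and Christoffel}. Your concern about the Bogovski\u{\i} correction is unwarranted here, since $B_{\cF_0}$ acts on the smooth, compactly supported input $\nabla\chi\,[m(\cdot-h)+h']$ and returns a $\rC_c^\infty(\cF_0)^3$-vector field depending linearly on $(\ell,\omega)$, so the required $\rC^2$-bound with exponential weight is immediate.
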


In the following, we take into account small enough $\eps_0$ such that $\rJ_X$ is invertible for all $\eps \in (0,\eps_0)$ by \autoref{lem:aux ests trafo global}(a).
The next lemma provides estimates of some of the terms appearing in \eqref{eq:rhs G_1 to G_4}.
Here \autoref{lem:further aux ests global}(a) follows as in \cite[Proposition~7.4]{BBHR:23}, while for the estimates in~(b), we invoke \autoref{lem:aux ests trafo global}(d) and proceed as in the proof of \autoref{prop:ests of F_2} for the estimate of $(v \cdot \nabla)Q$.
Concerning \autoref{lem:further aux ests global}(c), we make use of $(\rT - \cT)(v,\pi,Q) = (\nabla v + (\nabla v)^\top)(\Id_3 - \bO) - \frac{2 \xi}{3}(\Id_3 - \bO^\top)(\Delta Q - Q) - \frac{2 \xi}{3} \bO^\top (\Delta Q - Q) (\Id_3 - \bO)$, recall the mapping properties of $\cJ$ from \autoref{lem:prep ests nonlin terms}(b) and employ \autoref{lem:aux ests trafo global}(e).

\begin{lem}\label{lem:further aux ests global}
Let $p > 4$, and for $\eps \in (0,\eps_0)$, consider $(v,Q,\ell,\omega,\pi) \in \tcK_\eps$.
Then there exists $C > 0$ so that
\begin{enumerate}[(a)]
    \item $\| \mre^{\eta(\cdot)} (\cL_1 - \Delta)v \|_{\F_\infty^v}$, $\| \mre^{\eta(\cdot)} \cM v \|_{\F_\infty^v}$, $\| \mre^{\eta(\cdot)} \cN(v) \|_{\F_\infty^v}$, $\| \mre^{\eta(\cdot)} (\cG - \nabla)\pi \|_{\F_\infty^v} \le C \eps^2$,
    \item $\| \mre^{\eta(\cdot)} (\cL_2 - \Delta)Q \|_{\F_\infty^Q}$, $\| \mre^{\eta(\cdot)} (\del_t Y \cdot \nabla) Q \|_{\F_\infty^Q}$, $\| \mre^{\eta(\cdot)} (v \cdot \nabla) Q \|_{\F_\infty^Q} \le C \eps^2$, and
    \item for $\cJ$ as defined in \autoref{lem:prep ests nonlin terms}(b), it is valid that $\| \mre^{\eta(\cdot)} \cJ((\rT - \cT)(v,\pi,Q)) \|_p \le C \eps^2$.
\end{enumerate}
\end{lem}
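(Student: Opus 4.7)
The overall plan is to exploit that every nonlinear term in \eqref{eq:rhs G_1 to G_4} admits a factorization into (i) a ``transform defect'' factor quantifying the deviation of the coordinate change from the identity, which is of order $\eps$ by \autoref{lem:aux ests trafo global}, and (ii) a factor linear or multilinear in the solution $(v,Q,\ell,\omega,\pi)$, whose weighted norm is of order $\eps$ by the assumption $(v,Q,\ell,\omega,\pi) \in \tcK_\eps$. Since the transform bounds in \autoref{lem:aux ests trafo global} are time-uniform, the weight $\mre^{\eta(\cdot)}$ is always absorbed into the solution factor, reducing each estimate to an unweighted H\"older-type bound.

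For part~(a), I will expand $(\cL_1 - \Delta)v$ using \eqref{eq:transformed Laplacians}: every summand carries at least one factor among $g^{ij} - \delta_{ij}$, $\del_l g^{ij}$, $\Gamma_{jk}^i$, $\del_j (g^{kl}\Gamma_{kl}^i)$ or $\del_j \del_k X_l$, each bounded by $C\eps$ in $\rL^\infty(0,\infty;\rL^\infty(\cF_0))$ (and in $\rL^\infty(0,\infty;\rW^{1,\infty}(\cF_0))$ when needed) by \autoref{lem:aux ests trafo global}(c)--(d); pairing with the remaining derivative of $v$, whose weighted $\rL^p(\rL^2)$-norm is at most $\eps$, yields the bound $C\eps^2$. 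The same mechanism governs $\cM v$, where the $\rL^\infty$-smallness of $\del_t Y$ and $\del_j \del_t X_k$ provides the first factor of $\eps$, and $(\cG - \nabla)\pi$, whose structure contains $g^{ij} - \delta_{ij}$ multiplying $\del_j \pi$ with $\| \mre^{\eta(\cdot)} \pi \|_{\E_\infty^\pi} \le \eps$. The convective term $\cN(v)$ is genuinely quadratic; here I plan to place the weight entirely on one factor in $\rL^p(\rH^1)$ (bounded by $\eps$) and the other in $\rL^\infty(0,\infty;\rL^\infty(\cF_0))$ via the mixed-derivative theorem and Sobolev embedding from \autoref{lem:ests of max reg space}(a)--(b), which is again $\le C\eps$, producing the quadratic bound.

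Part~(b) proceeds analogously: $(\cL_2 - \Delta)Q$ and $(\del_t Y \cdot \nabla) Q$ pair a transform defect of order $\eps$ with a weighted $\rL^p(\rH^1)$-bounded derivative of $Q$. The bilinear term $(v \cdot \nabla) Q$ I will treat exactly as in the proof of \autoref{prop:ests of F_2}: choose $s < 3$ and $s' > \nicefrac{3}{2}$ with $\rB_{2p}^{3 - \nicefrac{2}{p}}(\cF_0) \hookrightarrow \rW^{2,2s'}(\cF_0)$, so that by \autoref{lem:ests of max reg space}(a) the velocity sits in $\rL^p(0,\infty;\rW^{1,2s}(\cF_0))$ while $\nabla Q$ sits in $\rL^\infty(0,\infty;\rW^{1,2s'}(\cF_0))$; placing the weight on the $\rL^p$-factor gives one power of $\eps$, and the weighted version of \autoref{lem:ests of max reg space}(b) delivers the second one.

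For part~(c), the cornerstone is the algebraic decomposition
\begin{equation*}
    (\rT - \cT)(v,\pi,Q) = \bigl(\nabla v + (\nabla v)^\top\bigr)(\Id_3 - \bO) - \nicefrac{2\xi}{3}(\Id_3 - \bO^\top)(\Delta Q - Q) - \nicefrac{2\xi}{3}\bO^\top(\Delta Q - Q)(\Id_3 - \bO),
\end{equation*}
valid because the pressure block $\pi \Id_3$ is invariant under conjugation by $\bO \in \SO(3)$ and thus cancels. Invoking the continuity of $\cJ$ from $\rH^{\eps + \nicefrac{1}{2}}(\cF_0)^{3 \times 3}$ to $\R^6$ from \autoref{lem:prep ests nonlin terms}(b), I will bound each summand by $\|\Id_3 - \bO\|_\infty$ (or its transpose), which is $\le C \eps$ by \autoref{lem:aux ests trafo global}(e), times the $\rL^p$-norm of $\nabla v$ respectively $(\Delta - \Id_3)Q$ in $\rH^{\eps + \nicefrac{1}{2}}(\cF_0)$; inserting the weight $\mre^{\eta(\cdot)}$ on that second factor gives the remaining $\eps$ from $(v,Q,\ell,\omega,\pi) \in \tcK_\eps$. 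The main technical obstacle throughout will be the bookkeeping needed to make sure, in each individual term, that the exponential weight lands on the factor whose weighted norm is controlled by $\tcK_\eps$-membership while the other factor carries a time-uniform $\rL^\infty$-bound independent of the weight; once this routing is in place, the gain $T^\eta$ or $T^{\nicefrac{1}{p}}$ used in the local large-data estimates of \autoref{sec:proof local} is directly replaced here by the factor $\eps$ coming from the solution itself.
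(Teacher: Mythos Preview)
Your proposal is correct and follows essentially the same approach as the paper: the paper also reduces~(a) to the argument of \cite[Proposition~7.4]{BBHR:23}, handles~(b) via \autoref{lem:aux ests trafo global}(d) together with the $(v\cdot\nabla)Q$-estimate from the proof of \autoref{prop:ests of F_2}, and treats~(c) by exactly the algebraic decomposition you wrote down combined with the mapping properties of~$\cJ$ and the bound $\|\bO-\Id_3\|_\infty\le C\eps$ from \autoref{lem:aux ests trafo global}(e). Your write-up is in fact more explicit than the paper's terse references, but the underlying mechanism is identical.
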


We now successively prove the estimates of the nonlinear terms from~\eqref{eq:rhs G_1 to G_4}.

\begin{prop}\label{prop:ests G_1}
Let $p > 4$.
Then there is $C > 0$ so that for $\eps \in (0,\eps_0)$ and $(z,\pi)$, $(z_1,\pi_1)$, $(z_2,\pi_2) \in \tcK_\eps$, it holds that $\| \mre^{\eta(\cdot)} G_1(z,\pi) \|_{\F_\infty^v} \le C \eps^2$ and $\| \mre^{\eta(\cdot)} (G_1(z_1,\pi_1) - G_1(z_2,\pi_2)) \|_{\F_\infty^v} \le C \eps \cdot \| (z_1,\pi_1) - (z_2,\pi_2) \|_{\tcK}$.
\end{prop}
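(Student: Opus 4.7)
The plan is to first peel off the terms already controlled by existing results. By \autoref{lem:further aux ests global}(a), the addends $(\cL_1-\Delta)v$, $\cM v$, $\cN(v)$ and $(\cG-\nabla)\pi$ in $G_1$ are each bounded by $C\eps^2$ in $\mre^{-\eta(\cdot)}\F_\infty^v$, so it remains to control
\begin{equation*}
\cR(Q)\coloneqq \cB_\tau(Q)+\cB_\sigma(Q)-\mdiv S_\xi(0)(\Delta-\Id_3)Q.
\end{equation*}
I would split $\cB_\tau=\cB_{\tau_\rh}+\cB_{\tau_\rl}$ as in \autoref{sec:change of var} and address the two pieces separately. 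The lower-order transformed term $\cB_{\tau_\rl}(Q)$ consists of expressions each at least quadratic in $Q$ or in $\nabla Q$; using the embedding $\rB_{2p}^{3-\nicefrac{2}{p}}(\cF_0)\hookrightarrow\rW^{1,\infty}(\cF_0)$ from \autoref{lem:ests of max reg space}(b) to place one $Q$-factor in $\rL^\infty(0,\infty;\rW^{1,\infty}(\cF_0))$ with bound $C\eps$, and bounding the remaining factor in $\mre^{-\eta(\cdot)}\rL^p(0,\infty;\rH^1(\cF_0))$ also by $C\eps$, yields the desired $C\eps^2$ bound; the transform coefficients appearing in these expressions are uniformly bounded by \autoref{lem:aux ests trafo global}(b).

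For the high-order part I would exploit the identity $B_{\tau_\rh}(Q)Q+B_\sigma(Q)Q=\mdiv S_\xi(Q)(\Delta-\Id_3)Q$, stemming from the construction in \autoref{sec:change of var} together with the relation $\tau_\rh(Q)+\sigma(Q)=S_\xi(Q)(\Delta-\Id_3)Q$, in order to add and subtract and rewrite
\begin{align*}
\cB_{\tau_\rh}(Q)+\cB_\sigma(Q)-\mdiv S_\xi(0)(\Delta-\Id_3)Q
&=\bigl[\cB_{\tau_\rh}(Q)-B_{\tau_\rh}(Q)Q\bigr]+\bigl[\cB_\sigma(Q)-B_\sigma(Q)Q\bigr]\\
&\quad+\mdiv\bigl(S_\xi(Q)-S_\xi(0)\bigr)(\Delta-\Id_3)Q.
\end{align*}
The last summand is genuinely quadratic in $Q$ at top order; applying \autoref{lem:Lipschitz est of lc part} with linearization base $\hQ=0$ bounds it pointwise in time in $\rL^2(\cF_0)^3$ by $C\|Q\|_{\rX_\gamma}\|Q\|_{\rH^3(\cF_0)}$, and integrating in time with the exponential weight placed on the $\rH^3$-factor gives a $C\eps^2$ bound. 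The two bracketed transform remainders are handled by expanding exactly as in the proof of \autoref{lem:est highest order F_1}: every surviving summand carries a factor of the form $\partial_j Y_l-\delta_{jl}$, $g^{lm}-\delta_{lm}$, $\partial_i\partial_j X_k$, $\partial_j g^{lm}$ or $\partial_j\Delta Y_k$, each of which is $\cO(\eps)$ in $\rL^\infty(0,\infty;\rW^{1,\infty}(\cF_0))$ by \autoref{lem:aux ests trafo global}(c)--(d). Pairing such a factor with the highest-regularity $Q$-factor, on which the exponential weight is placed, produces the $C\eps^2$ bound term by term.

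The Lipschitz estimate follows from the same decompositions, since each contribution to $G_1$ is multilinear in $(v,Q,\nabla Q,\pi)$ and in the transform data. Writing every difference telescopically places $(z_1,\pi_1)-(z_2,\pi_2)$ on exactly one factor per summand, while the remaining factors stay $\cO(\eps)$ or uniformly bounded; the Lipschitz dependence of the transform on the rigid-body velocities is obtained from the Cauchy problem \eqref{eq:IVP X} in the spirit of \autoref{lem:ests of trafo and related terms}. The principal bookkeeping obstacle is the placement of the exponential weight $\mre^{\eta(\cdot)}$: across the many summands coming from the expansion of $\cB_{\tau_\rh}$, $\cB_\sigma$ and the transform remainders, the weight must be attached to exactly one $Q$-factor, typically the one of highest derivative order, so that H\"older in time can pair it with uniformly bounded, non-decaying factors; ensuring that such a placement is consistently available for every term is the heart of the argument.
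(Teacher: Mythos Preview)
Your proposal is correct, but it takes a somewhat different route from the paper's proof. The paper observes that $\cB_\sigma(Q)$, $\cB_{\tau_{\rh,2}}(Q)$, $\cB_{\tau_{\rh,3}}(Q)$ and all three $\cB_{\tau_{\rl,i}}(Q)$ already carry at least two $Q$-factors in every addend, so each of these is $\cO(\eps^2)$ in $\mre^{-\eta(\cdot)}\F_\infty^v$ directly, with no add-and-subtract and no appeal to the untransformed $B$-operators. Only the genuinely linear piece $\cB_{\tau_{\rh,1}}(Q)$ must be paired with $\mdiv S_\xi(0)(\Delta-\Id_3)Q=-\tfrac{2\xi}{3}\mdiv(\Delta Q-Q)$; their difference is then expanded into transform remainders exactly as you describe. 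You instead mirror the local-case decomposition of \autoref{lem:est highest order F_1} uniformly: you insert $B_{\tau_\rh}(Q)Q$ and $B_\sigma(Q)Q$ and split into transform remainders $\cB-B$ plus the Lipschitz piece $\mdiv(S_\xi(Q)-S_\xi(0))(\Delta-\Id_3)Q$, which you control via \autoref{lem:Lipschitz est of lc part}. Your route is more systematic and reuses the local-case structure verbatim; the paper's is more economical since it avoids the intermediate $B$-terms and \autoref{lem:Lipschitz est of lc part} altogether for the quadratic pieces. One small point to clean up: \autoref{lem:Lipschitz est of lc part} is stated for the operator matrix $A_\xi$, whose $(1,2)$-block carries the Helmholtz projection $\bP$, whereas you need the estimate in $\rL^2(\cF_0)^3$ without $\bP$; this is harmless, since the underlying bound holds before projecting and is immediate from the explicit form of $S_\xi$ in \eqref{eq:S_xi and tS_xi}.
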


\begin{proof}
By \autoref{lem:further aux ests global}(a), it remains to estimate $\cB_{\tau_\rl}(Q) + \cB_{\tau_\rh}(Q) + \cB_{\sigma}(Q) - \mdiv S_\xi(0)(\Delta - \Id_3)Q$.
We begin with $\cB_{\sigma}(Q)$ from \eqref{eq:cB_sigma}.
The first two addends have the same structure as the others, so we concentrate on them.
For this, we recall $\cL_2$ from \eqref{eq:transformed Laplacians} and deduce from \autoref{lem:aux ests trafo global}(c) that $\| \mre^{\eta(\cdot)} \cL_2 Q \|_{\F_\infty^v} \le C \eps$.
Arguing that the factor $Q_{ik}$ can be bounded by $C \eps$ in $\rL^\infty(0,\infty;\rL^\infty(\cF_0))$, estimating the first, second and third derivatives of $Q$ in $\rL^p(0,\infty;\rL^2(\cF_0))$ with factor $\mre^{\eta(\cdot)}$ by $\eps$, and using \autoref{lem:aux ests trafo global} to bound the other factors, we obtain $\| \mre^{\eta(\cdot)} \cB_\sigma(Q) \|_{\F_\infty^v} \le C \eps^2$.
The terms $\cB_{\tau_{\rh,2}}$, $\cB_{\tau_{\rh,3}}$, $\cB_{\tau_{\rl,1}}$, $\cB_{\tau_{\rl,2}}$ and $\cB_{\tau_{\rl,3}}$ allow for analogous estimates thanks to the presence of at least two factors with estimates by $C \eps$ in each addend.

For the estimate of the linear term $\cB_{\tau_{\rh,1}}$ from \eqref{eq:cB_tau_rh_1}, we invoke the term $\mdiv S_\xi(0) (\Delta - \Id_3)Q$.
Recalling that it is given by $-\nicefrac{2 \xi}{3} \mdiv (\Delta Q - Q)$, we expand the difference $\cB_{\tau_{\rh,1}}(Q) - \mdiv S_\xi(0)(\Delta - \Id_3)Q$ as
\begin{equation*}
    \begin{aligned}
        &-\frac{2}{3} \sum_{j,k=1}^3 \Biggl(\sum_{l=1}^3 \biggl(\sum_{m=1}^3 \Bigl((\del_m \del_l \del_k Q_{ij})(\del_j Y_m - \delta_{jm}) g^{kl} + (\del_m \del_l \del_k Q_{ij}) \delta_{jm} (g^{kl} - \delta_{kl})\Bigr)\\
        &\quad + (\del_l \del_k Q_{ij})(\del_j g^{kl}) + (\del_l \del_k Q_{ij}) (\del_j Y_l) \Delta Y_k\biggr) + (\del_k Q_{ij}) (\del_j \Delta Y_k) + (\del_j Y_k - \delta_{jk})(\del_k Q_{ij})\Biggr).
    \end{aligned}
\end{equation*}
For estimates of $\del_j Y_m - \delta_{jm}$ or $g^{kl} - \delta_{kl}$ by $C \eps$, we exploit \autoref{lem:aux ests trafo global}(c), while $Q$-terms yield an $\eps$-power, and $\del_j \Delta Y_k$ can be estimated by $C \eps$ thanks to \autoref{lem:aux ests trafo global}.
The other terms can be bounded by \autoref{lem:aux ests trafo global}, so $\| \mre^{\eta(\cdot)} (\cB_{\tau_{\rh,1}}(Q) - \mdiv S_\xi(0)(\Delta - \Id_3)Q) \|_{\F_\infty^v} \le C \eps^2$.
The Lipschitz estimates can be obtained in the same way upon modifying the above lemmas accordingly. 
\end{proof}

\begin{prop}\label{prop:ests G_2}
Assume $p > 4$.
There exists $C > 0$ so that for $\eps \in (0,\eps_0)$, $(z,\pi)$, $(z_1,\pi_1)$, $(z_2,\pi_2) \in \tcK_\eps$, we have $ \| \mre^{\eta(\cdot)} G_2(z) \|_{\F_\infty^Q} \le C \eps^2$ and $\| \mre^{\eta(\cdot)}(G_2(z_1) - G_2(z_2)) \|_{\F_\infty^Q} \le C \eps \cdot \| (z_1,\pi_1) - (z_2,\pi_2) \|_{\tcK}$.
\end{prop}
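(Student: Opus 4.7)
The plan is to follow the same pattern as the proof of \autoref{prop:ests G_1}. First, \autoref{lem:further aux ests global}(b) disposes of the three addends $(\cL_2-\Delta)Q$, $(\del_t Y\cdot\nabla)Q$ and $(v\cdot\nabla)Q$, each of which already enjoys an $\mre^{\eta(\cdot)}$-weighted $\F_\infty^Q$-bound of order $\eps^2$. What remains to be handled is the stretching contribution $\cS(v,Q)+\tS_\xi(0)\nabla v$ together with the polynomial $Q$-terms $Q^2 - \tr(Q^2)\nicefrac{\Id_3}{3} - \tr(Q^2)Q$.

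For the stretching part the strategy mimics that of \autoref{lem:est highest order F_2}, now expanded around $0$ instead of $Q_0$. Using $-S(\nabla v,Q)=\tS_\xi(Q)\nabla v$ from \autoref{sec:energy and equilibria} and noting that the change of variables turns $-S(\nabla v,Q)$ into $-\cS(v,Q)=\tS_\xi(Q)\cD$ with $\cD$ as in \eqref{eq:transformed grad}, we would decompose
\begin{equation*}
    \cS(v,Q)+\tS_\xi(0)\nabla v = -\tS_\xi(Q)\cD_v + \bigl(\tS_\xi(0)-\tS_\xi(Q)\bigr)\nabla v, \twith \cD_v \coloneqq \cD - \nabla v.
\end{equation*}
By \eqref{eq:repr diff transformed grad and grad}, each addend of $\cD_v$ contains at least one factor from $\{\del_l\del_k X_i,\, \del_k X_i-\delta_{ki},\, \del_j Y_l-\delta_{jl}\}$ which is of size $C\eps$ in $\rL^\infty(0,\infty;\rW^{1,\infty}(\cF_0))$ by \autoref{lem:aux ests trafo global}(d); placing the weight $\mre^{\eta(\cdot)}$ on the remaining $v$- or $\nabla v$-factor then produces $\|\mre^{\eta(\cdot)}\cD_v\|_{\rL^p(0,\infty;\rH^1(\cF_0))}\le C\eps^2$, and since $\tS_\xi(Q)$ is uniformly bounded on $\tcK_\eps$ the same order is inherited by the first summand. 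For the second summand, \eqref{eq:S_xi and tS_xi} reveals that $\tS_\xi(Q)-\tS_\xi(0)$ is a sum of terms each carrying at least one factor of $Q$; combining H\"older's inequality with $\|Q\|_{\rL^\infty(0,\infty;\rW^{1,\infty}(\cF_0))}\le C\eps$ (obtained from \autoref{lem:ests of max reg space}(b) applied to the weighted $Q$) and $\|\mre^{\eta(\cdot)}\nabla v\|_{\rL^p(0,\infty;\rH^1(\cF_0))}\le\eps$ then yields the required $C\eps^2$ bound.

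The polynomial $Q$-terms are all at least quadratic in $Q$, so the analogue of \eqref{eq:est of power} with the exponential weight gives
\begin{equation*}
    \|\mre^{\eta(\cdot)}Q^m\|_{\rL^p(0,\infty;\rH^1(\cF_0))} \le C\|Q\|_{\rL^\infty(0,\infty;\rW^{1,\infty}(\cF_0))}^{m-1}\|\mre^{\eta(\cdot)}Q\|_{\rL^p(0,\infty;\rH^1(\cF_0))} \le C\eps^m \le C\eps^2
\end{equation*}
for $m=2,3$ and $\eps\in(0,\eps_0)$ sufficiently small. The Lipschitz estimate will follow from the multilinearity of every ingredient: a telescoping expansion writes $G_2(z_1)-G_2(z_2)$ as a finite sum of products in which precisely one factor is a difference $v_1-v_2$ or $Q_1-Q_2$ while every other factor is controlled by $C\eps$, exactly as in the self-map estimate. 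The main obstacle is the stretching term: neither $\cS(v,Q)$ nor $\tS_\xi(0)\nabla v$ is individually quadratically small, and making the cancellation explicit via the decomposition above, while extracting two small factors from each resulting addend by means of the fine structure of \eqref{eq:repr diff transformed grad and grad} and the pointwise estimates of \autoref{lem:aux ests trafo global}, is the crux of the argument.
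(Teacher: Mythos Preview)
Your proposal is correct and follows essentially the same route as the paper. The only organisational difference is in the stretching term: the paper works directly with the explicit formula~\eqref{eq:cS}, first shows $\|\mre^{\eta(\cdot)}\cD\|_{\F_\infty^Q}\le C\eps$, then observes that every addend of $\cS(v,Q)$ carrying a $Q$-factor is already $O(\eps^2)$, and only the lone linear piece $\nicefrac{\xi}{3}(\cD+\cD^\top)$ needs to be combined with $\tS_\xi(0)\nabla v=-\nicefrac{\xi}{3}(\nabla v+(\nabla v)^\top)$ to produce $\nicefrac{\xi}{3}(\cD_v+\cD_v^\top)$. Your operator-level splitting $\cS(v,Q)+\tS_\xi(0)\nabla v=-\tS_\xi(Q)\cD_v+(\tS_\xi(0)-\tS_\xi(Q))\nabla v$ packages the same cancellation more compactly and avoids estimating the full $\cD$ separately; the two arguments are interchangeable.
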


\begin{proof}
Thanks to \autoref{lem:further aux ests global}(b), we only need to handle $\cS(v,Q) + \tS_\xi(0)\nabla v$, $Q^2 - \tr(Q^2)\nicefrac{\Id_3}{3}$ and $\tr(Q^2)Q$.
The quadratic shape of the term $Q^2 - \tr(Q^2)\nicefrac{\Id_3}{3}$ implies that it can be estimated by $C \eps^2$ upon invoking the first inequality in \eqref{eq:est of power} which remains valid for $T = \infty$.
The term $\tr(Q^2)Q$ can be treated likewise. 

For the estimates of the remaining terms, we first handle the term $\cD_{ij}$ from \eqref{eq:transformed grad}.
Indeed, making use of H\"older's inequality and \autoref{lem:aux ests trafo global}(d), we infer that $\| \mre^{\eta(\cdot)} \cD_{ij} \|_{\F_\infty^Q} \le C \eps$.
In view of the shape of~$\cS(v,Q)$ from \eqref{eq:cS} and estimates of the $Q$-factors by $\eps$ in $\rL^\infty(0,\infty;\rW^{1,\infty}(\cF_0))$ by virtue of \autoref{lem:ests of max reg space}(b), all terms in $\cS(v,Q)$ except for $\nicefrac{\xi}{3}(\cD_{ij} + \cD_{ji})$ can thus be estimated by $C \eps^2$.
In order to guarantee suitable decay of this term, we invoke $\tS_\xi(0)\nabla v = - \nicefrac{\xi}{3}(\nabla v + (\nabla v)^\top)$.
We recall the representation of~$\cD_v = \cD - \nabla v$ from \eqref{eq:repr diff transformed grad and grad}, exploit \autoref{lem:aux ests trafo global}(d) and observe that $\| \mre^{\eta(\cdot)} \del_l v_k \|_{\rL^p(0,\infty;\rH^1(\cF_0))} \le C \eps$ to conclude $\| \mre^{\eta(\cdot)} \cD_v \|_{\F_\infty^Q} \le C \eps^2$, and likewise for $\cD_v^\top$.
In total, we have $\| \mre^{\eta(\cdot)} (\cS(v,Q) + \tS_\xi(0) \nabla v) \|_{\F_\infty^Q} \le C \eps^2$.
Note that the proof of the Lipschitz estimate is conceptually similar.
\end{proof}

\begin{prop}\label{prop:ests G_3 and G_4}
Let $p > 4$.
Then there is $C > 0$ so that for all $\eps \in (0,\eps_0)$, $(z,\pi)$, $(z_1,\pi_1)$, $(z_2,\pi_2) \in \tcK_\eps$, it is valid that
\begin{equation*}
    \begin{aligned}
        \| \mre^{\eta(\cdot)} G_3(z,\pi) \|_{\F_\infty^\ell}
        &\le C \eps^2 \tand \| \mre^{\eta(\cdot)} (G_3(z_1,\pi_1) - G_3(z_2,\pi_2)) \|_{\F_\infty^\ell} 
        \le C \eps \cdot \| (z_1,\pi_1) - (z_2,\pi_2) \|_{\tcK} \taswellas\\
        \| \mre^{\eta(\cdot)} G_4(z,\pi) \|_{\F_\infty^\omega}
        &\le C \eps^2 \tand \| \mre^{\eta(\cdot)} (G_4(z_1,\pi_1) - G_4(z_2,\pi_2)) \|_{\F_\infty^\omega} 
        \le C \eps \cdot \| (z_1,\pi_1) - (z_2,\pi_2) \|_{\tcK}.
    \end{aligned}
\end{equation*}
\end{prop}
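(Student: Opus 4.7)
The plan is to mirror the pattern of \autoref{prop:ests of F_3 and F_4} from the local setting, exploiting the fact that every summand in $G_3$ and $G_4$ is at least quadratic so that the smallness parameter $\eps$ appears with a power of at least $2$, while the exponential weight $\mre^{\eta(\cdot)}$ is distributed through the factorisation. From \eqref{eq:rhs G_1 to G_4} I would split $G_3$, $G_4$ into three pieces: the bilinear body-velocity products $\omega\times\ell$ and $\omega\times(J_0\omega)$; the Cauchy-stress surface integral involving $(\rT-\cT)(v,\pi,Q)$; and the genuinely nonlinear surface integral with $\bO^\top(\tau_\rr(Q)+\sigma(Q))\bO$. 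Each group is handled separately, and by multilinearity the Lipschitz estimates arise from the same computations applied to pairwise differences.

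For the first group I would use that $\tcK$ controls $\mre^{\eta(\cdot)}\omega$ and $\mre^{\eta(\cdot)}\ell$ in $\rW^{1,p}(0,\infty)\hookrightarrow \rL^\infty(0,\infty)$, so $|\omega(t)|+|\ell(t)|\le C\eps\mre^{-\eta t}$; absorbing one $\mre^{-\eta t}$ into the weight and placing the other factor in $\rL^p$ yields $C\eps^2$ immediately. The Cauchy piece is precisely \autoref{lem:further aux ests global}(c). For the genuinely nonlinear piece I would first strip off $\bO$ and $\bO^\top$ using $\|\bO\|_\infty\le C$ from \autoref{lem:aux ests trafo global}(e), reducing matters to bounding $\mre^{\eta(\cdot)}\cJ(\tau_\rr(Q)+\sigma(Q))$ in $\rL^p(0,\infty)^6$, with $\cJ$ as in \autoref{lem:prep ests nonlin terms}(b).

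The three representative contributions from $\tau_\rr+\sigma$ are $-\nabla Q\odot\nabla Q$, $\xi Q\tr(Q\Delta Q)$, and $\xi Q\tr(Q^2)^2$; all other addends have the same or more benign multilinear shape. For each I would combine the trace continuity $\rH^{\nicefrac12+\eps}(\cF_0)\hookrightarrow \rL^2(\del\cS_0)$ with the Besov embeddings $\rB_{2p}^{3-\nicefrac{2}{p}}(\cF_0)\hookrightarrow \rH^{\nicefrac32+\eps}, \rW^{\nicefrac14+\eps,4}, \rW^{\nicefrac18+\eps,8}$ to place the low-regularity $Q$-factors in $\rL^\infty(0,\infty;\rX_\gamma)$, where they are bounded by $C\eps$ thanks to \autoref{lem:ests of max reg space}(b) together with the definition of $\tcK$ in \eqref{eq:def of tcK}. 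The remaining high-regularity factor is then estimated in $\rL^p(0,\infty;\rH^{\nicefrac32+\eps})$ or, for the cubic term, in $\rL^q(0,\infty;\rH^{\nicefrac52+\eps})$ for some $q>p$ via the mixed-derivative embedding from \autoref{lem:ests of max reg space}(a); in both cases the weighted bound by $C\eps$ transfers because $\mre^{\eta(\cdot)}$ is translation-invariant in time and commutes with every interpolation identity invoked.

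The main technical obstacle I anticipate is the cubic term $\xi Q\tr(Q\Delta Q)$: $\Delta Q$ on $\del\cS_0$ must be controlled in $\rL^p(0,\infty;\rL^2(\del\cS_0))$, which forces the full interior regularity $\rH^{\nicefrac52+\eps}(\cF_0)$. Achieving this with the exponential weight preserved requires combining the weighted $\rL^p(\rH^3)$-bound with the mixed-derivative theorem and Sobolev trace inequality, keeping careful track of the weight along the way. Once this point is handled, the Lipschitz estimates follow by the same sequence of embeddings applied to pairwise differences, using the polynomial dependence of every summand on $(v,Q,\ell,\omega,\pi)$ and weighted Lipschitz counterparts of \autoref{lem:aux ests trafo global} and \autoref{lem:further aux ests global}.
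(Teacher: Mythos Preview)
Your proposal is correct and follows essentially the same route as the paper: the same three-piece decomposition, the same appeal to \autoref{lem:further aux ests global}(c) for the Cauchy part, the same stripping of $\bO,\bO^\top$ via \autoref{lem:aux ests trafo global}(e), and the same three representative terms handled by the Besov embeddings from \autoref{prop:ests of F_3 and F_4}. One small simplification: in the global setting you do not need to pass to $\rL^q$ with $q>p$ for the cubic term---that step was only used locally to manufacture a $T^\eta$-factor, and on $(0,\infty)$ it would leave you in the wrong Lebesgue space in time; instead, place $\mre^{\eta(\cdot)}\Delta Q$ directly in $\rL^p(0,\infty;\rH^{1}(\cF_0))\hookrightarrow\rL^p(0,\infty;\rH^{\nicefrac12+\delta}(\cF_0))$ from the $\tcK$-norm and keep the two low-regularity $Q$-factors in $\rL^\infty(0,\infty;\rW^{\nicefrac14+\delta,4}(\cF_0))$, which is exactly what the paper does.
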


\begin{proof}
It readily follows that the terms $-\mS \omega \times \ell$ as well as $\omega \times (J_0 \omega)$ can be estimated by $C \eps^2$.
With regard to \eqref{eq:rhs G_1 to G_4} and \autoref{lem:further aux ests global}, it remains to discuss the term $-\int_{\del \cS_0} \bO(t)^\top(\tau_{\rr}(Q) + \sigma(Q))\bO(t) n \rd \Gamma$ in~$G_3$ and the analogous term in $G_4$.
As in the proof of \autoref{prop:ests of F_3 and F_4}, we focus on selected terms.
The other terms can be treated in the same way.
We have already discussed the linear part of $\tau$ in \autoref{lem:further aux ests global}, and we observe that every remaining term has at least two factors of terms with $Q$, allowing for an estimate by~$C \eps^2$.
Similarly as in the proof of \autoref{prop:ests of F_3 and F_4}, with the respective embeddings applied with $T = \infty$, and employing \autoref{lem:aux ests trafo global}(e) for the boundedness of $\bO$ and $\bO^\top$, for small $\delta > 0$, we get
$\left \| \mre^{\eta(\cdot)} \int_{\del\cS_0} (\bO^\top \nabla Q \odot \nabla Q \bO) n \rd \Gamma \right\|_p 
    \le C \cdot \| Q \|_{\rL^\infty(0,\infty;\rH^{\nicefrac{3}{2}+ \delta}(\cF_0))} \cdot \| \mre^{\eta(\cdot)} Q \|_{\rL^p(0,\infty;\rH^{\nicefrac{3}{2}+\delta}(\cF_0))} \le C \eps^2$, $\left \| \mre^{\eta(\cdot)} \int_{\del\cS_0} (\bO^\top Q \tr(Q \Delta Q) \bO) n \rd \Gamma \right\|_p \le C \eps^3$ and $\left \| \mre^{\eta(\cdot)} \int_{\del\cS_0} (\bO^\top Q \tr(Q^2) \tr(Q^2) \bO) n \rd \Gamma \right\|_p \le C \eps^5$.
Hence, recalling $\cJ$ from \autoref{lem:prep ests nonlin terms}(b), we deduce that $\| \mre^{\eta(\cdot)} \cJ(-\bO^\top (\tau_{\rr}(Q) + \sigma(Q)) \bO) \|_p \le C \eps^2$, so the first part of the assertion is implied.
The Lipschitz estimate can be obtained likewise.
\end{proof}

These preparations pave the way for the proof of the second main result.

\begin{proof}[Proof of \autoref{thm:global strong wp for small data}]
First, we verify \autoref{thm:global strong wp for small data in ref config}.
We have already argued that $\Psi \colon \tcK_\eps \to \tcK$ is well-defined.
For $\eps \in (0,\eps_0)$, with $\eps_0 > 0$ as above, let $(z,\pi)$, $(z_1,\pi_1)$, $(z_2,\pi_2) \in \tcK_\eps$.
By $\| (v_0,Q_0,\ell_0,\omega_0) \|_{\rX_\gamma} \le \delta$, and exploiting \autoref{thm:lin theory Q-tensor interaction}(b) as well as \autoref{prop:ests G_1}, \autoref{prop:ests G_2} and \autoref{prop:ests G_3 and G_4}, we find
\begin{equation*}
    \| \Psi(z,\pi) \|_{\tcK} \le \Cmr(\delta + C \eps^2) \tand \| \Psi(z_1,\pi_1) - \Psi(z_2,\pi_2) \|_{\tcK} \le \Cmr C \eps.
\end{equation*}
With $\delta \le \nicefrac{1}{2 \Cmr}$ and $\eps \le \min\{\eps_0,\nicefrac{1}{2 \Cmr C},\nicefrac{1}{2C},\sqrt{\nicefrac{\delta}{C}}\}$, we conclude the self-map and contraction property of $\Psi$.
Thus, there exists a unique fixed point $(z_*,\pi_*) = (v_*,Q_*,\ell_*,\omega_*,\pi_*)$, being the unique global strong solution to \eqref{eq:cv1}--\eqref{eq:cv3}.
\autoref{thm:global strong wp for small data in ref config} then is a consequence of the definition of $\tcK$ in \eqref{eq:def of tcK}.

The last task is to deduce \autoref{thm:global strong wp for small data} from \autoref{thm:global strong wp for small data in ref config}.
The latter implies $(\ell,\omega) \in \rW^{1,p}(0,\infty)^6$, so $h'$, $\Omega$ and $X$ can be obtained as indicated in \autoref{sec:change of var}.
For $v$, $Q$ and~$\pi$, we invoke the backward change of coordinates from \autoref{sec:change of var}.
By construction, this is the solution to \eqref{eq:LC-fluid equationsiso}--\eqref{eq:initial} in the asserted regularity class.
Finally, we prove that $\dist(\cS(t),\del \cO) > \nicefrac{r}{2}$ for all $t \in [0,\infty)$. 
The shape of $\cS(t)$ at time~$t$ as introduced in \autoref{sec:model} and the above estimates imply that for all $t \in [0,\infty)$, we have
\begin{equation*}
    \dist(\cS(t),\cS_0) \le |h(t)| + |\bO(t) - \Id_3| < \nicefrac{r}{2}, \tso \dist(\cS(t),\del \cO) > \nicefrac{r}{2} \enspace \text{by} \enspace \dist(\cS_0,\cO) > r. \qedhere
\end{equation*}
\end{proof}

\medskip 

{\bf Acknowledgements}
This research is supported by the Basque Government through the BERC 2022-2025 program and by the Spanish State Research Agency through BCAM Severo Ochoa excellence accreditation CEX2021-01142-S funded by MICIU/AEI/10.13039/501100011033 and through Grant PID2023-146764NB-I00 funded by MICIU/AEI/10.13039/501100011033 and cofunded by the European Union. 
 Felix Brandt would like to express his gratitude to the German National Academy of Sciences Leopoldina for support through the Leopoldina Fellowship Program with grant number~LPDS 2024-07.
 Felix Brandt and Matthias Hieber acknowledge the support by DFG project FOR~5528. 
 Arnab Roy would like to thank the Alexander von Humboldt-Foundation, Grant RYC2022-036183-I funded by MICIU/AEI/10.13039/501100011033 and by ESF+.
 The authors would also like to thank the anonymous referees for the careful reading and valuable comments that helped improve the quality of the manuscript.

\bibliography{lcfsi_short}

\begin{thebibliography}{10}

\bibitem{ADL:14}
{\sc H.~Abels, G.~Dolzmann, and Y.~Liu}, {\em Well-posedness of a fully coupled
  {N}avier-{S}tokes/{Q}-tensor system with inhomogeneous boundary data}, SIAM
  J. Math. Anal., 46 (2014), pp.~3050--3077.

\bibitem{ADL:16}
\leavevmode\vrule height 2pt depth -1.6pt width 23pt, {\em Strong solutions for
  the {B}eris-{E}dwards model for nematic liquid crystals with homogeneous
  {D}irichlet boundary conditions}, Adv. Differential Equations, 21 (2016),
  pp.~109--152.

\bibitem{AH:23}
{\sc A.~Agresti and A.~Hussein}, {\em Maximal {$L^p$}-regularity and
  {$H^\infty$}-calculus for block operator matrices and applications}, J.
  Funct. Anal., 285 (2023), pp.~Paper No. 110146, 69.

\bibitem{Ama:95}
{\sc H.~Amann}, {\em Linear and quasilinear parabolic problems. {V}ol. {I}},
  vol.~89 of Monographs in Mathematics, Birkh\"{a}user Boston, Inc., Boston,
  MA, 1995.
\newblock Abstract linear theory.

\bibitem{Ball:17}
{\sc J.~M. Ball}, {\em Liquid crystals and their defects}, in Mathematical
  thermodynamics of complex fluids, vol.~2200 of Lecture Notes in Math.,
  Springer, Cham, 2017, pp.~1--46.

\bibitem{BZ:11}
{\sc J.~M. Ball and A.~Zarnescu}, {\em Orientability and energy minimization in
  liquid crystal models}, Arch. Ration. Mech. Anal., 202 (2011), pp.~493--535.

\bibitem{BE:94}
{\sc A.~N. Beris and B.~J. Edwards}, {\em Thermodynamics of flowing systems
  with internal microstructure}, vol.~36 of Oxford Engineering Science Series,
  The Clarendon Press, Oxford University Press, New York, 1994.
\newblock Oxford Science Publications.

\bibitem{BBH:23}
{\sc T.~Binz, F.~Brandt, and M.~Hieber}, {\em Rigorous analysis of the
  interaction problem of sea ice with a rigid body}, Math. Ann., 389 (2024),
  pp.~591--625.

\bibitem{BBHR:23}
{\sc T.~Binz, F.~Brandt, M.~Hieber, and A.~Roy}, {\em Interaction of liquid
  crystals with a rigid body}, Trans. Amer. Math. Soc., 377 (2024),
  pp.~8049--8090.

\bibitem{bravin2025long}
{\sc M.~Bravin, E.~Feireisl, A.~Roy, and A.~Zarnescu}, {\em On the long time
  behaviour of a system of several rigid bodies immersed in a viscous fluid},
  arXiv preprint arXiv:2505.04372,  (2025).

\bibitem{CRWX:16}
{\sc C.~Cavaterra, E.~Rocca, H.~Wu, and X.~Xu}, {\em Global strong solutions of
  the full {N}avier-{S}tokes and {$Q$}-tensor system for nematic liquid crystal
  flows in two dimensions}, SIAM J. Math. Anal., 48 (2016), pp.~1368--1399.

\bibitem{CSMT:00}
{\sc C.~Conca, J.~San Mart\'{\i}n~H., and M.~Tucsnak}, {\em Existence of
  solutions for the equations modelling the motion of a rigid body in a viscous
  fluid}, Comm. Partial Differential Equations, 25 (2000), pp.~1019--1042.

\bibitem{dGP:95}
{\sc P.~G. de~Gennes and J.~Prost}, {\em The Physics of Liquid Crystals},
  International Series of Monogr., Clarendon Press, 1995.
\newblock Second Edition.

\bibitem{DE:99}
{\sc B.~Desjardins and M.~J. Esteban}, {\em Existence of weak solutions for the
  motion of rigid bodies in a viscous fluid}, Arch. Ration. Mech. Anal., 146
  (1999), pp.~59--71.

\bibitem{EMT:23}
{\sc S.~Ervedoza, D.~Maity, and M.~Tucsnak}, {\em Large time behaviour for the
  motion of a solid in a viscous incompressible fluid}, Math. Ann., 385 (2023),
  pp.~631--691.

\bibitem{FRSZ:14}
{\sc E.~Feireisl, E.~Rocca, G.~Schimperna, and A.~Zarnescu}, {\em Evolution of
  non-isothermal {L}andau-de {G}ennes nematic liquid crystals flows with
  singular potential}, Commun. Math. Sci., 12 (2014), pp.~317--343.

\bibitem{Galdi:02}
{\sc G.~P. Galdi}, {\em On the motion of a rigid body in a viscous liquid: a
  mathematical analysis with applications}, in Handbook of mathematical fluid
  dynamics, {V}ol. {I}, North-Holland, Amsterdam, 2002, pp.~653--791.

\bibitem{GS:02}
{\sc G.~P. Galdi and A.~L. Silvestre}, {\em Strong solutions to the problem of
  motion of a rigid body in a {N}avier-{S}tokes liquid under the action of
  prescribed forces and torques}, in Nonlinear problems in mathematical physics
  and related topics, {I}, vol.~1 of Int. Math. Ser. (N. Y.), Kluwer/Plenum,
  New York, 2002, pp.~121--144.

\bibitem{GGH:13}
{\sc M.~Geissert, K.~G\"{o}tze, and M.~Hieber}, {\em {$L^p$}-theory for strong
  solutions to fluid-rigid body interaction in {N}ewtonian and generalized
  {N}ewtonian fluids}, Trans. Amer. Math. Soc., 365 (2013), pp.~1393--1439.

\bibitem{geng2023global}
{\sc Z.~Geng, A.~Roy, and A.~Zarnescu}, {\em Global existence of weak solutions
  for a model of nematic liquid crystal-colloidal interactions}, SIAM Journal
  on Mathematical Analysis, 56 (2024), pp.~4324--4355.

\bibitem{GGRB:14}
{\sc F.~Guill\'{e}n-Gonz\'{a}lez and M.~A. Rodr\'{\i}guez-Bellido}, {\em Weak
  time regularity and uniqueness for a {$Q$}-tensor model}, SIAM J. Math.
  Anal., 46 (2014), pp.~3540--3567.

\bibitem{HHW:24}
{\sc M.~Hieber, A.~Hussein, and M.~Wrona}, {\em Strong {W}ell-{P}osedness of
  the {Q}-{T}ensor {M}odel for {L}iquid {C}rystals: {T}he {C}ase of {A}rbitrary
  {R}atio of {T}umbling and {A}ligning {E}ffects {$\xi$}}, Arch. Ration. Mech.
  Anal., 248 (2024), p.~Paper No. 40.

\bibitem{HP:18}
{\sc M.~Hieber and J.~W. Pr\"{u}ss}, {\em Modeling and analysis of the
  {E}ricksen-{L}eslie equations for nematic liquid crystal flows}, in Handbook
  of mathematical analysis in mechanics of viscous fluids, Springer, Cham,
  2018, pp.~1075--1134.

\bibitem{IW:77}
{\sc A.~Inoue and M.~Wakimoto}, {\em On existence of solutions of the
  {N}avier-{S}tokes equation in a time dependent domain}, J. Fac. Sci. Univ.
  Tokyo Sect. IA Math., 24 (1977), pp.~303--319.

\bibitem{KKLTTW:18}
{\sc B.~Kaltenbacher, I.~Kukavica, I.~Lasiecka, R.~Triggiani, A.~Tuffaha, and
  J.~T. Webster}, {\em Mathematical theory of evolutionary fluid-flow structure
  interactions}, vol.~48 of Oberwolfach Seminars, Birkh\"{a}user/Springer,
  Cham, 2018.
\newblock Lecture notes from Oberwolfach seminars, November 20--26, 2016.

\bibitem{LL:95}
{\sc F.-H. Lin and C.~Liu}, {\em Nonparabolic dissipative systems modeling the
  flow of liquid crystals}, Comm. Pure Appl. Math., 48 (1995), pp.~501--537.

\bibitem{MT:18}
{\sc D.~Maity and M.~Tucsnak}, {\em {$L^p$}-{$L^q$} maximal regularity for some
  operators associated with linearized incompressible fluid-rigid body
  problems}, in Mathematical analysis in fluid mechanics---selected recent
  results, vol.~710 of Contemp. Math., Amer. Math. Soc., [Providence], RI,
  [2018] \copyright 2018, pp.~175--201.

\bibitem{MT:23}
\leavevmode\vrule height 2pt depth -1.6pt width 23pt, {\em Motion of rigid
  bodies of arbitrary shape in a viscous incompressible fluid: wellposedness
  and large time behaviour}, J. Math. Fluid Mech., 25 (2023), pp.~Paper No. 74,
  29.

\bibitem{MO}
{\sc S.~Mondal, A.~Majumdar, and I.~M. Griffiths}, {\em Nematohydrodynamics for
  colloidal self-assembly and transport phenomena}, Journal of colloid and
  interface science, 528 (2018), pp.~431--442.

\bibitem{MS:22}
{\sc M.~Murata and Y.~Shibata}, {\em Global well posedness for a {${\Bbb
  Q}$}-tensor model of nematic liquid crystals}, J. Math. Fluid Mech., 24
  (2022), pp.~Paper No. 34, 30.

\bibitem{MU}
{\sc I.~Musevic, M.~Skarabot, U.~Tkalec, M.~Ravnik, and S.~Zumer}, {\em
  Two-dimensional nematic colloidal crystals self-assembled by topological
  defects}, Science, 313 (2006), pp.~954--958.

\bibitem{nevcasova2021}
{\sc {\v{S}}.~Ne{\v{c}}asov{\'a}, M.~Ramaswamy, A.~Roy, and
  A.~Schl{\"o}merkemper}, {\em Self-propelled motion of a rigid body inside a
  density dependent incompressible fluid}, Mathematical Modelling of Natural
  Phenomena, 16 (2021), p.~9.

\bibitem{PZ:11}
{\sc M.~Paicu and A.~Zarnescu}, {\em Global existence and regularity for the
  full coupled {N}avier-{S}tokes and {$Q$}-tensor system}, SIAM J. Math. Anal.,
  43 (2011), pp.~2009--2049.

\bibitem{PZ:12}
\leavevmode\vrule height 2pt depth -1.6pt width 23pt, {\em Energy dissipation
  and regularity for a coupled {N}avier-{S}tokes and {$Q$}-tensor system},
  Arch. Ration. Mech. Anal., 203 (2012), pp.~45--67.

\bibitem{PS:16}
{\sc J.~Pr\"{u}ss and G.~Simonett}, {\em Moving interfaces and quasilinear
  parabolic evolution equations}, vol.~105 of Monographs in Mathematics,
  Birkh\"{a}user/Springer, [Cham], 2016.

\bibitem{Ray:07}
{\sc J.-P. Raymond}, {\em Stokes and {N}avier-{S}tokes equations with
  nonhomogeneous boundary conditions}, Ann. Inst. H. Poincar\'{e} C Anal. Non
  Lin\'{e}aire, 24 (2007), pp.~921--951.

\bibitem{RT:19}
{\sc A.~Roy and T.~Takahashi}, {\em Local null controllability of a rigid body
  moving into a {B}oussinesq flow}, Math. Control Relat. Fields, 9 (2019),
  pp.~793--836.

\bibitem{SS:19}
{\sc M.~Schonbek and Y.~Shibata}, {\em Global well-posedness and decay for a
  {$\Bbb{Q}$} tensor model of incompressible nematic liquid crystals in
  {$\Bbb{R}^N$}}, J. Differential Equations, 266 (2019), pp.~3034--3065.

\bibitem{Ser:87}
{\sc D.~Serre}, {\em Chute libre d'un solide dans un fluide visqueux
  incompressible. {E}xistence}, Japan J. Appl. Math., 4 (1987), pp.~99--110.

\bibitem{SV:12}
{\sc A.~M. Sonnet and E.~G. Virga}, {\em Dissipative ordered fluids}, Springer,
  New York, 2012.
\newblock Theories for liquid crystals.

\bibitem{Sto:51}
{\sc G.~G. Stokes}, {\em On the effect of the internal friction of fluids on
  the motion of pendulums}, Trans. Cambridge Phil. Soc., 8 (1851), pp.~8--106.

\bibitem{Sto:09}
\leavevmode\vrule height 2pt depth -1.6pt width 23pt, {\em On the Effect of the
  Internal Friction of Fluids on the Motion of Pendulums}, Cambridge Library
  Collection - Mathematics, Cambridge University Press, 2009, pp.~1--10.

\bibitem{Takahashi:03}
{\sc T.~Takahashi}, {\em Analysis of strong solutions for the equations
  modeling the motion of a rigid-fluid system in a bounded domain}, Adv.
  Differential Equations, 8 (2003), pp.~1499--1532.

\bibitem{Tri:78}
{\sc H.~Triebel}, {\em Interpolation theory, function spaces, differential
  operators}, vol.~18 of North-Holland Mathematical Library, North-Holland
  Publishing Co., Amsterdam-New York, 1978.

\bibitem{Vir:94}
{\sc E.~G. Virga}, {\em Variational theories for liquid crystals}, vol.~8 of
  Applied Mathematics and Mathematical Computation, Chapman \& Hall, London,
  1994.

\bibitem{WZZ:21}
{\sc W.~Wang, L.~Zhang, and P.~Zhang}, {\em Modelling and computation of liquid
  crystals}, Acta Numer., 30 (2021), pp.~765--851.

\bibitem{Wil:15}
{\sc M.~Wilkinson}, {\em Strictly physical global weak solutions of a
  {N}avier-{S}tokes {$Q$}-tensor system with singular potential}, Arch. Ration.
  Mech. Anal., 218 (2015), pp.~487--526.

\bibitem{Zar:21}
{\sc A.~Zarnescu}, {\em Mathematical problems of nematic liquid crystals:
  between dynamical and stationary problems}, Philos. Trans. Roy. Soc. A, 379
  (2021), pp.~Paper No. 20200432, 15.

\end{thebibliography}
\bibliographystyle{siam}

\end{document}